\newcommand{\be}{\begin{equation}}
\newcommand{\ee}{\end{equation}}
\newcommand{\beq}{\begin{eqnarray}}
\newcommand{\eeq}{\end{eqnarray}}
\newtheorem{prop}{Proposition}[section]
\newtheorem{remark}[prop]{Remark}
\def\begeq{\begin{equation}}
\def\endeq{\end{equation}}
\def\tr{{\rm tr}}
\def\odot{\setbox0=\hbox{$\bigcirc$}\relax \mathbin {\hbox
to0pt{\raise.5pt\hbox to\wd0{\hfil $\wedge$\hfil}\hss}\box0 }}
\numberwithin{equation} {section}
\numberwithin{equation}{section}
\newtheorem{theorem}{\bf Theorem}[section]
\newtheorem{proposition}[theorem]{\bf Proposition}
\newtheorem{lemma}[theorem]{\bf Lemma}
\newtheorem{corollary}[theorem]{\bf Corollary}
\begin{document}

\title[Inverse Gauss curvature flows]
{Inverse Gauss curvature flows with free boundaries in a cone}

\author{
Li Chen, Ni Xiang$^\ast$}
\address{Faculty of Mathematics and Statistics, Hubei Key Laboratory of Applied Mathematics, Hubei University,
Wuhan 430062, P.R. China} \email{chenli@hubu.edu.cn,
nixiang@hubu.edu.cn}

\thanks{Research of the second author was supported by funds from the National
Natural Science Foundation of China No.11971157. }

\thanks{$\ast$ the corresponding author}

\date{}
\begin{abstract}
We consider strictly convex hypersurfaces with the boundary which meets a strictly convex cone
perpendicularly. We prove that if these hypersurfaces expand inside this cone,
driven by $-\frac{\alpha}{n}$-th power of the Gauss curvature with $0<\alpha<1$,
then the evolution exists for all the time and
the evolving hypersurfaces converge smoothly to a piece of round
sphere after rescaling.
\end{abstract}

\maketitle {\it \small{{\bf Keywords}: Inverse Gauss curvature flow, Neumann boundary,
Monge-Amp\`ere equation.}

{{\bf MSC}: Primary 53C44, Secondary
35K96.}
}

\section{Introduction}

During the past decades, parabolic flows for hypersurfaces play an important role in differential geometry. Following the ground breaking work of Huisken
\cite{Hu}, who showed that after rescaling the surfaces
converge to round spheres, for strictly
convex initial surfaces, moving by the mean curvature flow in $\mathbb{R}^{n+1}$.
Several authors started to investigate whether the same results hold true
for surfaces moving by other curvature flows.

One of the fundamental example is powers of Gauss curvature flow in $\mathbb{R}^{n+1}$ starting with the seminal paper of Firey \cite{Firey} in 1974. The motivation of this flow arises in the study of affine geometry and of image analysis \cite{And2000}. For the short time existence, it was shown in \cite{Tso} for $\alpha=1$ and for any $\alpha>0$ in \cite{Chow}. The study of the asymptotic behavior is equivalent to the large time behavior of the normalized flow. In the special case $\alpha = \frac{1}{n}$, Chow \cite{Chow} proved convergence to a round sphere. In the affine invariant case $\alpha=\frac{1}{n+2}$, Andrews \cite{And1996} showed that the flow converged to an ellipsoid. The argument in \cite{And1996} rely crucially on the affine invariance of the equation which can not be generalized to other exponents. Convergence to spheres is known in some other special cases: for $n=1$ and $\alpha>1$ in \cite{And1998}; for $n=1$ and $\frac{1}{3}<\alpha<1$ in \cite{And2003}; for $n=2, \alpha=1$ in \cite{And}; for $n=2, \frac{1}{2}<\alpha<1$ in \cite{And2012}. However the methods in \cite{And,And2012} don't appear to work in higher dimensions since the results rely on an application of the maximum principle to a suitably chosen function of the curvature eigenvalues. As far as we know, the flow converges
to a self-similar solution for every $n\geq 2$ and $\alpha\geq \frac{1}{n+2}$. This was proved by Andrews \cite{And2000} for $\frac{1}{n+2}\leq \alpha\leq\frac{1}{n}$; and by Guan and Ni \cite{GuanNi} for $\alpha=1$; and by Andrews, Guan and Ni \cite{And2016} for $\alpha>\frac{1}{n+2}$.
For $\alpha\geq \frac{1}{n+2}$ Brendle, Choi and Daskalopoulos \cite{Bren} proved that either the limit was a round sphere, or the limit was an ellipsoid with $ \alpha=\frac{1}{n+2}$.

Compared with the above inward flows, expanding flows of the form
\begin{equation}\label{062101}
\frac{d}{dt}X = F^{-\alpha} \nu
\end{equation}
has also been studied intensively, where $F$ is a curvature function homogeneous of degree 1. The motivation to analyse the behaviour of inverse curvature flows has mostly been driven by their power to deduce geometric inequalities for hypersurfaces.
For $\alpha=1$, the equation \eqref{062101} is scale-invariant. Gerhardt \cite{Ge90} and
Urbas \cite{Ur} present a different method for proving
the convergence of star-shaped surfaces into spheres. There is a representation formula for solutions to the
inverse harmonic mean curvature flow of Smoczyk \cite{Smoczyk}. Huisken and
Ilmanen \cite{Hu01} used the inverse mean curvature flow to prove the Penrose inequality. For $0<\alpha\neq1$, the equation \eqref{062101} is non-scale-invariant. When the initial hypersurface is a sphere, Equation \eqref{062101} is
equivalent to an ODE, since the leaves of the flow will then be spheres too, and the spherical
flow will develop a blow up in finite time if $\alpha>1$.
For $0 < \alpha < 1$ Urbas \cite{Ur1991} proved that the flow exists for all time, converges to infinity and that
the rescaled hypersurfaces converge to a sphere. In the case $\alpha>1$ there are only a few special results in dimension $n = 2$ and essentially
only for the Gaussian curvature $F = K^{\frac{1}{n}}$, where $K$ is the Gaussian curvature. Schn$\ddot{u}$rer \cite{Sch2006} considered the case $\alpha = 2$ and
Li \cite{LiQ} the case $1 \leq \alpha \leq 2.$
Then Gerhardt \cite{Ge14} considered closed star-shaped initial hypersurfaces and proved that the flow existed for all time and converges to infinity, if $0 < \alpha < 1$; while in case $\alpha > 1$,
the flow blew up in finite
time, and where they assumed the initial hypersurface to be strictly convex. In both cases the
properly rescaled flows converged to the unit sphere.

The behaviour of hypersurfaces which move under various geometric flows has
been studied intensively. Although one major topic within the research
is devoted to the evolution of compact, convex hypersurfaces without
boundary, another point of interest is the behaviour of compact surfaces with boundary
which move under curvature flows. Both Dirichlet as
well as Neumann boundary conditions have been studied. Here we focus on Neumann boundary problems.
 The mean curvature flow with Neumann boundary
conditions was studied in the work of Huisken \cite{Hu89}, Stahl
\cite{Sta} and Lambert \cite{Lam}. Gauss curvature flows with
Neumann boundary condition were studied in \cite{SSch1,Sch}. Since
the Ph.D. thesis \cite{Ma2} written by Thomas Marquardt appeared,
also cf \cite{Ma1}, the inverse mean curvature flow of hypersurfaces
with boundary has been considered. Lambert and Scheuer \cite{Lam1}
obtained the flow hypersurfaces converge to the embedding of a flat
disk in the case of perpendicular to the unit sphere from the
inside. Scheuer, Wang and Xia \cite{Sch8} employed a specifically
designed locally constrained inverse harmonic mean curvature flow
with free boundary to prove Alexandrov-Fenchel inequalities for
convex hypersurfaces with free boundary in a ball. Recently, inverse
Gauss curvature flow with $\alpha=1$ expanding in a cone has been
studied by Sani \cite{San}. In details, Sani considered
hypersurfaces which were graphs over a sphere evolving in a cone,
with Neumann boundary condition and showed the flow existed for all
time and convergenced after rescaling, to a subset of a sphere.

In this paper, we study inverse Gauss curvature flow in a cone with the power
$0<\alpha<1$.
Let $\mathbb{S}^{n}$  be the sphere of radius one in
$\mathbb{R}^{n+1}$. Let $\Omega\subset\mathbb{S}^{n}$ be a portion of
$\mathbb{S}^{n}$ such that $\Sigma^{n}:=\{rx\in \mathbb{R}^{n+1}
|r>0,x \in\partial \Omega\}$ is the boundary of a smooth, strictly convex cone. We
can prove the following statement:

\begin{theorem}\label{main1.1}
Let $0<\alpha<1$ and $M_0$ be a strictly convex hypersurface
which meets $\Sigma^n$ orthogonally. That is
\begin{eqnarray*}
\partial M_0 \subset {\Sigma}^n,\quad \qquad{\langle\mu ,\nu_{0}\rangle|}_{\partial {M_0}}=0,
\end{eqnarray*}
where $\nu_{0}$ is the outward unit normal vector of $M_0$ and $\mu$ is the outward unit
normal vector of $\Sigma^n$. And assume that $$M_0=\mbox{graph}_{\mathbb{S}^n}u_{0}|_{\Omega}$$
for a positive map $u_0: \Omega\rightarrow \mathbb{R}$ with $u_0 \in C^4(\overline{\Omega})$, then we have the following conclusions:

(i) There exists a family of strictly convex hypersurfaces $M_t$ given by the unique embedding
\begin{eqnarray*}
X(\cdot, t): \Omega\rightarrow \mathbb{R}^{n+1}
\end{eqnarray*}
with $X(\partial {\Omega},t) \subset \Sigma^n$ for $t\geqslant0$,
satisfying the following system
\begin{equation}\label{Eq}
\left\{
\begin{aligned}
&\frac{d }{\partial t}X=K^{-\frac{\alpha}{n}}\nu &&in~
\Omega\times(0,\infty),\\
&\langle\mu(X),\nu(X)\rangle=0 &&on~\partial \Omega\times(0,\infty),\\
&X(\cdot,0)=M_{0}  && in~\Omega,
\end{aligned}
\right.
\end{equation}
where $K$ is the Gauss curvature of $M_t:=X(\Omega,t)$ and $\nu$ is the unit normal vector to $M_t$
pointing away from the center of the cone.

(ii) The leaves $M_t$ are graphs over $\mathbb{S}^n$,
$$M_t=\mbox{graph}_{\mathbb{S}^n}u(\cdot, t)|_{\Omega}.$$

(iii) Moreover,
the evolving hypersurfaces converge smoothly to a piece of round
sphere after rescaling.
\end{theorem}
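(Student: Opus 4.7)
The plan is to reduce \eqref{Eq} to a scalar oblique parabolic problem for the radial graph function $u(\cdot,t)\colon\Omega\to\mathbb{R}$, derive a priori estimates up to the boundary, and then pass to the rescaled limit. Writing $X(x,t)=u(x,t)\,x$ for $x\in\Omega\subset\mathbb{S}^n$ and using the standard formulas for the induced metric, support function and Gauss curvature of a radial graph, the first line of \eqref{Eq} becomes a parabolic Monge--Amp\`ere type equation of the form $u_t=v\,K^{-\alpha/n}$, where $v=\sqrt{1+|\nabla u|^2/u^2}$ and $\nabla$ is the spherical connection. Since $\Sigma^n$ is a cone, the ambient outward normal $\mu$ is constant along rays, so the orthogonality condition reduces to the Neumann condition $\nabla_{\mu_\Omega}u=0$ on $\partial\Omega$, where $\mu_\Omega$ is the outward conormal of $\partial\Omega$ inside $\mathbb{S}^n$. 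Short-time existence for this oblique fully nonlinear problem is classical, following the lines of Marquardt \cite{Ma2} and Sani \cite{San}.

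For the a priori estimates I would proceed in the standard order. The $C^0$ bounds come from barriers: any sphere centred at the apex of the cone meets $\Sigma^n$ perpendicularly and, up to the ODE $\dot r=r^{-\alpha}$, is a radial self-expanding solution of \eqref{Eq}. Taking spherical caps with radii bracketing $u_0$ and applying the comparison principle gives explicit algebraic bounds for $u$ with growth rate $\Theta(t)\sim t^{n/(n+\alpha)}$, which fixes the natural rescaling. For the gradient estimate I would apply the maximum principle to $w=|\nabla u|^2/u^2$; the boundary contribution on $\partial\Omega$ has the good sign because $\Omega$ is geodesically convex in $\mathbb{S}^n$, which is equivalent to the strict convexity of the cone.

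The main technical step is the curvature estimate. Using the evolution of the second fundamental form of $M_t$, or equivalently of $b^{ij}=(h^{ij})^{-1}$, one controls the principal curvatures from above and $K$ from below. The Neumann condition on $u$ must be translated into a second-order oblique condition for the Weingarten map, and the convexity of the cone is what makes the boundary term in the maximum-principle argument have the right sign. The hypothesis $\alpha<1$ is decisive at this point: it is precisely what is needed for the zero-order terms in the linearised equation for the speed $K^{-\alpha/n}$ to absorb the quadratic curvature terms, yielding a uniform upper bound on the speed and hence a positive lower bound on $K$. Once uniform parabolicity and strict convexity are established, Krylov--Safonov theory for oblique parabolic problems yields $C^{2,\beta}$ estimates, and Schauder bootstrapping gives uniform $C^k$ bounds; the standard continuation argument then proves long-time existence, giving parts (i) and (ii).

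For part (iii) I would rescale by the factor $\Theta(t)$ extracted from the $C^0$ step, so that $\tilde u=u/\Theta(t)$ satisfies a parabolic equation whose stationary solutions meeting $\Sigma^n$ orthogonally are exactly the constants on $\Omega$, corresponding to spherical caps. All a priori estimates carry over to $\tilde u$, and a monotonicity argument on the oscillation $\max_\Omega\tilde u-\min_\Omega\tilde u$, supplemented if necessary by an entropy-type integral along $M_t$, shows that $\tilde u\to$~constant exponentially in every $C^k$ norm, giving smooth convergence of $M_t/\Theta(t)$ to a spherical cap orthogonal to $\Sigma^n$. The main obstacle I anticipate is the curvature estimate up to $\partial\Omega$: running a maximum-principle argument simultaneously for an upper bound on the principal curvatures and a positive lower bound on $K$ against a second-order oblique boundary condition is delicate, and this is precisely where both $\alpha<1$ and the strict convexity of the cone are essential.
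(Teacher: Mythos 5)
Your high-level strategy (reduce to a scalar oblique parabolic Monge--Amp\`ere problem for $\varphi=\log u$, run a priori estimates, rescale by the radial solution) matches the paper's, but there are two substantive problems.

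First, the radial ODE and the rescaling factor are wrong. For a sphere of radius $r$ one has $K=r^{-n}$, so the speed is $K^{-\alpha/n}=r^{\alpha}$ and the radial equation is $\dot r=r^{\alpha}$, not $\dot r=r^{-\alpha}$; integrating gives $r(t)=\bigl((1-\alpha)t+r_0^{1-\alpha}\bigr)^{1/(1-\alpha)}$, so $\Theta(t)\sim t^{1/(1-\alpha)}$. Your stated $t^{n/(n+\alpha)}$ is not consistent either with the correct ODE or with the one you wrote down, and since $\Theta$ fixes the rescaling and the change of time variable in part (iii), this error propagates.

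Second, and more seriously, the $C^2$ boundary estimate as you describe it would not go through. You propose to work with the second fundamental form (or $b^{ij}$) and to translate the Neumann condition into ``a second-order oblique condition for the Weingarten map,'' then run a maximum principle against it. But pure normal second derivatives of $\varphi$ on $\partial\Omega$ cannot be controlled by tangential differentiations of the Neumann condition $D_\mu\varphi=0$; differentiating it only in tangential directions gives the mixed derivatives $\varphi_{\mu\tau}$, leaving $\varphi_{\mu\mu}$ undetermined, and there is no second-order boundary condition to invoke. What is actually needed, and what the paper does following Lions--Trudinger--Urbas, is an entirely different, two-part scalar argument: (a) a barrier construction $P=D\varphi\cdot Dq+Aq$ built from a modified distance function $q=-d+\eta d^2$, whose linearised-operator negativity (forced by the convexity of $\partial\Omega$ and the Monge--Amp\`ere structure via Hadamard's inequality) yields the double normal estimate $\varphi_{\mu\mu}\geq -C$ at the boundary; and (b) a maximum-principle test function $W=\log\bigl((w_{ij}+w'_{ij})\xi^i\xi^j+C\bigr)+\tfrac{\lambda}{2}|D\varphi|^2$ where the correction tensor $w'_{ij}$ is engineered from the boundary geometry so that when $W$ is maximised at a non-tangential, non-normal direction the problem reduces, by the LTU algebraic decomposition $\xi_0=\vartheta\tau+\sqrt{1-\vartheta^2}\mu$, to the already established tangential and double-normal cases. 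These two devices are the technical core of the proof and neither appears in your outline; without them the claim that ``Krylov--Safonov then yields $C^{2,\beta}$'' is not justified, because Krylov--Safonov requires the uniform parabolicity that only follows once the full $C^2$ bound is in hand. You also suggest an entropy/oscillation argument for convergence, whereas the paper gets exponential decay of $|D\tilde\varphi|$ directly from the rescaled gradient evolution, which is a cleaner and more quantitative route.
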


The main technique employed in this paper is from the well-known paper
written by Lions, Trudinger and Urbas \cite{LTU}, who treated the
elliptic Neumann boundary problem for the equation of
Monge-Amp\`ere type. They took advantage of the convexity of the domain and obtained the second order
derivative estimates. Their result was extended to the
parabolic Monge-Amp\`ere type equation with Neumann boundary
problem in \cite{Sch1}. Recently, their technique was also applied to
inverse Gauss curvature flows with Neumann boundary by M. Sani \cite{San}.

\section{The corresponding scalar equation with Neumann boundary}

\subsection{Setting and General facts}

\

For convenience, we state our conventions on Riemann Curvature tensor and derivative notation.
Let $M$ be a smooth manifold and $g$ be a Riemannian metric on
$M$ with Levi-Civita connection $D$. For a $(s, r)$ tensor field $\alpha$ on $M$, its
covariant derivative $D \alpha$ is a $(s, r+1)$ tensor field given by
\begin{eqnarray*}
&&D \alpha(Y^1, .., Y^s, X_1, ..., X_r, X)
\\&=&D_{X} \alpha(Y^1, .., Y^s, X_1, ..., X_r)\\&=&X(\alpha(Y^1, .., Y^s, X_1, ..., X_r))-
\alpha(D_X Y^1, .., Y^s, X_1, ..., X_r)\\&&-...-\alpha(Y^1, .., Y^s, X_1, ..., D_X  X_r),
\end{eqnarray*}
the coordinate expression of which is denoted by
$$D \alpha=(\alpha_{k_{1}\cdot\cdot\cdot
k_{r}; k_{r+1}}^{l_{1}\cdot\cdot\cdot
l_{s}}).$$
We can continue to define the second covariant derivative of $\alpha$ as follows,
\begin{eqnarray*}
&&D^2 \alpha(Y^1, .., Y^s, X_1, ..., X_r, X, Y)
=(D_{Y}(D\alpha))(Y^1, .., Y^s, X_1, ..., X_r, X),
\end{eqnarray*}
the coordinate expression of which is denoted by
$$D^2 \alpha=(\alpha_{k_{1}\cdot\cdot\cdot
k_{r}; k_{r+1}k_{r+2}}^{l_{1}\cdot\cdot\cdot
l_{s}}).$$
In particular, for a function $u: M\rightarrow \mathbb{R}$, we have the following important identity
$$D^2 f(X, Y)=YX(f)-(D_{Y}X)f.$$
Similarly, we can also define the higher order covariant derivative of $\alpha$:
$$D^3 \alpha=D(D^2 \alpha), ... ,$$
and so on.
For simplicity, the coordinate expression of the covariant differentiation will be
denoted by indices without semicolons, e.g. $$u_{i}, \quad u_{ij} \quad \mbox{or} \quad u_{ijk}$$ for
a function $u: M\rightarrow \mathbb{R}$.

Our conventions for the Riemannian curvature (3,1)-tensor Rm is defined by
\begin{equation*}
Rm(X, Y)Z=-D_{X}D_{Y}Z+D_{Y}D_{X}Z+D_{[X,
Y]}Z.
\end{equation*}
If $X, Y \in T_{x}M$ are any two vectors spanning a subspace $P\subset T_{x}M$, the sectional curvature of $P$ is defined by
\begin{equation*}
sec(P)= \frac{\langle Rm(X, Y)X, Y\rangle}{|X|^2|Y|^2-\langle X, Y\rangle^2},
\end{equation*}
where the length of a tangent vector $X$ is defined by $|X|\doteq g(X, X)^{\frac{1}{2}}$
and $\langle X, Y\rangle\doteq g(X, Y)$.
By picking a local coordinate chart $\{x^i\}_{i=1}^{n}$ of $M$, the
component of the (3,1)-tensor $Rm$ is defined by
\begin{equation*}
Rm\bigg({\frac{\partial}{\partial x^i}}, {\frac{\partial}{\partial x^j}}\bigg){\frac{\partial}{\partial x^k}}
= R_{ijk}^{\ \ \ l}{\frac{\partial}{\partial x^l}}
\end{equation*}
and $R_{ijkl}\doteq g_{lm}R_{ijk}^{\ \ \ m}$. Then, we have the standard commutation formulas (Ricci identities):
\begin{eqnarray}\label{RI}
\alpha_{k_{1}\cdot\cdot\cdot
k_{r};\ j i}^{l_{1}\cdot\cdot\cdot
l_{s}}-\alpha_{k_{1}\cdot\cdot\cdot
k_{r};\ i j}^{l_{1}\cdot\cdot\cdot
l_{s}}=\sum_{a=1}^{r}R^{\ \ \ m}_{ijk_{a}} \alpha_{k_{1}\cdot\cdot\cdot
k_{a-1}m k_{a+1}\cdot\cdot\cdot k_{r}}^{l_{1}\cdot\cdot\cdot
l_{s}}-\sum_{b=1}^{s}R^{\ \ \ l_b}_{ijm} \alpha_{k_{1}\cdot\cdot\cdot
k_{r}}^{l_{1}\cdot\cdot\cdot
l_{b-1}m l_{b+1}\cdot\cdot\cdot l_{r}}.
\end{eqnarray}

We list some facts which will be used frequently.
For the standard sphere $\mathbb{S}^n$ with the sectional curvature $1$ and the standard metric $\sigma$,
$$R_{ijkl}=\sigma_{ik}\sigma_{jl}-\sigma_{il}\sigma_{jk}.$$
A special case of Ricci identity for a function $u: M \rightarrow \mathbb{R}$
will be
\begin{equation*}
u_{kji}-u_{kij}=R^{\ \ \ m}_{ijk} u_m.
\end{equation*}
In particular, for a function $u: \mathbb{S}^n\rightarrow \mathbb{R}$,
\begin{eqnarray}\label{3C}
u_{kji}-u_{kij}=\sigma_{ik}u_j-\sigma_{jk}u_i.
\end{eqnarray}

Let $(M, g)$ be an immersed hypersurface in
$\mathbb{R}^{n+1}$ and $\nu$ be a given unit outward normal. The
second fundamental form $h_{ij}$ of the hypersurface $M$ with respect to
$\nu$ is defined by
\begin{eqnarray*}
h_{ij}=-\left\langle\frac{\partial^2 X}{\partial x^i\partial x^j},
\nu\right\rangle_{\mathbb{R}^{n+1}}.
\end{eqnarray*}

\subsection{The scalar equation}

\

In this section, working with coordinates on the sphere,
we equivalently formulate the problem \eqref{Eq} by
the corresponding scalar equation with Neumann boundary. First, we
compute some geometric quantities induced by
the embedding of the hypersurface into $\mathbb{R}^{n+1}$.
We know that the initial hypersurface can be represented as a graph over $\Omega \subset\mathbb{S}^n$.
If the evolving hypersurface $M_{t}$ given by the embedding
\begin{eqnarray*}
X(\cdot, t): \Omega\rightarrow \mathbb{R}^{n+1}
\end{eqnarray*}
at any time $t$ may also be represented as a
graph over $\Omega\subset\mathbb{S}^n\subset \mathbb{R}^{n+1}$,
then we can make ansatz
\begin{eqnarray*}
X(x,t)=\left(u(x,t),x\right)
\end{eqnarray*}
for some function $u: \Omega \times [0,T) \rightarrow \mathbb{R}$. In this case,
we can summarize our calculations as follows.
\begin{lemma}
Let $\Omega\subset \mathbb{S}^n$, $t\geq 0$ a fixed time. Assume the embedded hypersurface $M_t$
can be represented as a graph over $\Omega\subset\mathbb{S}^n$,
$$M_t=\mbox{graph}_{\mathbb{S}^n}u(\cdot, t)|_{\Omega}.$$
Assume that $x\in \Omega$ is
described by local coordinates $\xi^{1},\ldots,\xi^{n}$, that is,
$x=x(\xi^{1},\ldots,\xi^{n})$. Let $\partial_i=\frac{\partial}{\partial\xi^i}$ be the corresponding
coordinate fields on $\Omega\subset\mathbb{S}^n$,
$\sigma_{ij}=g_{\mathbb{S}^n}(\partial_i,\partial_j)$ be the metric
on $\Omega\subset\mathbb{S}^n$ and $\partial_r=\frac{\partial}{\partial r}$ be the radial
vector field in $\mathbb{R}^{n+1}$. Let $u_{i}=D_i u$,
$u_{ij}=D^2 u(\partial_i, \partial_j)$, $u_{ijk}=D^3 u(\partial_i, \partial_j, \partial_k)$
and $u_{ijkl}=D^4 u(\partial_i, \partial_j, \partial_k, \partial_l)$ denote the
covariant derivatives of $u$ with respect to the round metric
$g_{\mathbb{S}^n}$ respectively. Then the following formulas hold.

(i) The tangential
vector on $M_{t}$ is
\begin{eqnarray*}
X_{i}=\partial_{i}+u_i\partial_{r}
\end{eqnarray*}
and the corresponding outward unit normal vector is given by
\begin{eqnarray*}
\nu=\frac{1}{v}\left(\partial_r-\frac{1}{u^2}\sigma^{ij}u_i\partial_j\right),
\end{eqnarray*}
where $v=\sqrt{1+u^{-2}|D u|^2}$
with $|D u|^2=\sigma^{ij} u_i u_j$, and $\sigma^{ij}$ is the inverse of $\sigma_{ij}$.

(ii) The induced
metric $g$ on $M_t$ has the form
\begin{equation*}
g_{ij}=u^2\sigma_{ij}+u_i u_j
\end{equation*}
and its inverse is given by
\begin{equation*}
g^{ij}=\frac{1}{u^2}\left(\sigma^{ij}-\frac{u^i u^j}{u^2 v^{2}}\right),
\end{equation*}
where we denote $u^i=\sigma^{ij}u_j$.

(iii) The second fundamental form of
$M_t$ is given by
\begin{eqnarray*}
h_{ij}=\frac{1}{v}\left(-u_{ij}+u
\sigma_{ij}+\frac{2}{u}u_iu_j\right).
\end{eqnarray*}

(iv) The Gauss curvature has the form
\begin{eqnarray*}
K=\frac{\det(h_{ij})}{\det(g_{ij})}=\frac{1}{(u^2+|Du|^2)^{\frac{n}{2}}}
\frac{\det(u^2\sigma_{ij}-u u_{ij}+2u_i u_j)}{\det(u^2\sigma_{ij}+u_i u_j)}.
\end{eqnarray*}
\end{lemma}

\begin{proof}
This formulas can be verified by direct calculation. We refer the readers to find the details in \cite{Ch16, San}.
\end{proof}

Using techniques as in Ecker \cite{Eck}, see also \cite{Ma1, Ma2}.
The problem \eqref{Eq} is reduced to solving the following scalar
equation with Neumann boundary
\begin{equation}\label{Eq-}
\left\{
\begin{aligned}
&\frac{\partial u}{\partial t}=\frac{v}{K^{\frac{\alpha}{n}}} &&in~
\Omega\times(0,\infty),\\
&D_{\mu} u=0 &&on~\partial \Omega\times(0,\infty),\\
&u(\cdot,0)=u_{0}  &&in~\Omega.
\end{aligned}
\right.
\end{equation}
Define a new function $\varphi(x,t)=\log u(x, t)$ and then
the Gauss curvature can be rewritten as
\begin{eqnarray*}\label{Gauss cur-1}
K=\frac{e^{-n\varphi}}{(1+|D\varphi|^2)^{\frac{n+2}{2}}}
\frac{\det(\sigma_{ij}-\varphi_{ij}+\varphi_i \varphi_j)}{\det(\sigma_{ij})}.
\end{eqnarray*}
The evolution equation \eqref{Eq-} can be rewrite as
\begin{eqnarray*}
\frac{\partial}{\partial t}\varphi=e^{(\alpha-1)\varphi}
(1+|D\varphi|^2)^{\frac{\beta}{n}}\frac{\det^{\frac{\alpha}{n}}(\sigma_{ij})}
{\det^{\frac{\alpha}{n}}(\sigma_{ij}-\varphi_{ij}+\varphi_i \varphi_j)}=Q(\varphi, D\varphi, D^2\varphi),
\end{eqnarray*}
where $\beta=\frac{(\alpha+1)n}{2}+\alpha.$
\begin{remark}
In particular, the power $\alpha=1$,
the equation \eqref{Eq} is scale-invariant.
In this case, the functional $Q$ in the right of the above scalar equation
does not depend on $\varphi$. However, in this paper, we consider the more complex case $\alpha\neq1$, it means that
the equation \eqref{Eq} is non- scale-invariant or the functional $Q$ depends on $\varphi$.
\end{remark}

The convexity of the initial hypersurface $M_0$ means the matrix
$$\sigma_{ij}-(\varphi_{0})_{ij}+(\varphi_{0})_{i}(\varphi_{0})_{j}$$
is positive definite up to the boundary $\partial\Omega$, where $\varphi_0=\log u_0$. Thus, the problem \eqref{Eq}
is again reduced to solving the following scalar equation with Neumann boundary
\begin{equation}\label{Evo-1}
\left\{
\begin{aligned}
&\frac{\partial \varphi}{\partial t}=Q(\varphi, D\varphi, D^{2}\varphi)
&& in ~\Omega\times(0,T),\\
&D_\mu \varphi =0 && on ~\partial \Omega\times(0,T),\\
&\varphi(\cdot,0)=\varphi_{0} && in ~ \Omega,
\end{aligned}
\right.
\end{equation}
with the matrix
$$\sigma_{ij}-(\varphi_{0})_{ij}+(\varphi_{0})_{i}(\varphi_{0})_{j}$$
is positive definite up to the boundary $\partial\Omega$.
Based on the above facts and \cite{Ma1, Ma2}, we can get the following
existence and uniqueness for the parabolic system \eqref{Eq}.
\begin{lemma}
Let $X_0$ be as in Theorem \ref{main1.1}. Then there exist some
$T>0$, a unique solution  $u \in
C^{2+\alpha,\frac{1+\alpha}{2}}(\Omega\times [0,T],\mathbb{R}^{n+1}) \cap
C^{\infty}(\Omega \times (0,T], \mathbb{R}^{n+1})$, where $\varphi(x,t)=\log
u(x,t)$, of the parabolic system \eqref{Evo-1} with the matrix
$$\sigma_{ij}-\varphi_{ij}+\varphi_{i}\varphi_{j}$$
is positive definite up to the boundary $\partial\Omega$.
Thus there exist a unique map $\psi: \Omega\times[0,T]\rightarrow \Omega$ such that
the map $\widetilde{X}$ defined by
\begin{eqnarray*}
\widetilde{X}: \Omega\times[0,T)\rightarrow \mathbb{R}^{n+1}:
(x,t)\mapsto X(\psi(x,t),t)
\end{eqnarray*}
has the same regularity as stated in Theorem \ref{main1.1} and it is
the unique solution to the parabolic system \eqref{Eq}.
\end{lemma}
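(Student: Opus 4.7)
The plan is to exploit the scalar reformulation already carried out in Section 2 and then invoke standard short-time existence for fully nonlinear parabolic equations with oblique boundary data. The vector problem \eqref{Eq} is, after elimination of tangential motions, equivalent to the scalar Neumann problem \eqref{Evo-1} for $\varphi = \log u$. My strategy is therefore to first solve \eqref{Evo-1} on a short time interval, and then recover $\widehat X$ by composing the resulting graph with an appropriate family of diffeomorphisms of $\Omega$.

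The first step is to verify that \eqref{Evo-1} is uniformly parabolic at the initial datum. By hypothesis the symmetric tensor $w_{ij} := \sigma_{ij} - \varphi_{0;ij} + \varphi_{0;i}\varphi_{0;j}$ is positive definite up to $\partial \Omega$, so $Q$ is well defined and its linearization at $\varphi_0$ — proportional, modulo lower order terms, to the cofactor matrix of $w$ — is uniformly elliptic on $\overline{\Omega}$. The Neumann condition $D_\nu \varphi = 0$ is an oblique boundary condition, and its zeroth order compatibility $D_\nu \varphi_0 = 0$ is exactly the orthogonality hypothesis $\langle \mu,\nu_0\rangle|_{\partial M_0}=0$: since $\mu$ is tangent to $\mathbb{S}^n$ and normal to $\partial \Omega$ within $\mathbb{S}^n$, the formula for $\nu_0$ in the lemma gives $\langle \mu,\nu_0\rangle = -v^{-1} u^{-1} D_\nu \varphi$ along $\partial M_0$, so the two conditions are equivalent.

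The second step applies the standard short-time existence theorem for fully nonlinear uniformly parabolic equations with linear oblique boundary conditions in parabolic Hölder spaces, as used by Marquardt \cite{Ma1, Ma2} and in the closely related Monge--Amp\`ere setting \cite{Sch1}. This produces some $T>0$ and a unique $\varphi \in C^{2+\alpha,\frac{1+\alpha}{2}}(\overline{\Omega}\times[0,T])$ solving \eqref{Evo-1}; parabolic bootstrapping then yields smoothness on $\overline{\Omega}\times(0,T]$, and $w_{ij}$ remains positive definite up to $\partial\Omega$ for small $t$ by continuity. Setting $u = e^\varphi$ produces a graph hypersurface $X(x,t)=(u(x,t),x)$ whose outward normal speed is $K^{-\alpha/n}$, together with a purely tangential component coming from the use of radial parametrization.

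The final step is to convert this graph flow into a solution of the full vector system \eqref{Eq}. One removes the tangential component of $\partial_t X$ by solving the standard ODE $\partial_t \psi = -g^{ij}\langle \partial_t X,X_i\rangle\,\partial_j$ on $\Omega$ with $\psi(\cdot,0)=\mathrm{id}$; then $\widehat X := X\circ\psi$ satisfies $\partial_t \widehat X = K^{-\alpha/n}\nu$. Because the right hand side is tangent to $\partial \Omega$ precisely when $D_\nu \varphi = 0$, the Neumann condition guarantees that $\psi$ preserves $\partial \Omega$, and hence $\widehat X(\partial\Omega,t)\subset \Sigma^n$ with the perpendicularity condition preserved. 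I expect the main technical obstacle to be verification of the obliqueness and compatibility conditions at the parabolic corner $\partial \Omega \times \{0\}$, which is where the strict convexity of the cone $\Sigma^n$ enters in the spirit of \cite{LTU, Sch1}; once these are in place, the results of \cite{Ma1, Ma2} provide the desired regularity and uniqueness.
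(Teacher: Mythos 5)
Your proposal is correct and follows essentially the same route as the paper, which (rather tersely) reduces \eqref{Eq} to the scalar Neumann problem \eqref{Evo-1} via the graph ansatz and then cites Ecker and Marquardt \cite{Eck, Ma1, Ma2} for short-time existence, uniqueness, and the diffeomorphism $\psi$ removing tangential motion. You have merely spelled out the steps the paper delegates to the references: uniform parabolicity from positivity of $w_{ij}$ at $t=0$, equivalence of $\langle\mu,\nu_0\rangle=0$ with the Neumann condition $D_\mu\varphi_0=0$, and the ODE for $\psi$; the one minor imprecision is describing the linearized coefficient matrix as the cofactor of $w$ rather than (a positive multiple of) its inverse $w^{ij}$, but both are positive definite so nothing breaks.
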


Let $T^{\ast}$ be the maximal time such that there exists some $u\in
C^{2+\alpha,1+\frac{\alpha}{2}}(\Omega,[0,T^{\ast}))\cap C^{\infty}(\Omega,(0,T^{\ast}))$
which solves \eqref{Evo-1}. In the following, we shall prove a
priori estimates for those admissible solutions on $[0,T]$ where
$T<T^{\ast}$.

\section{$C^0$, $\dot{\varphi}$ and $C^1$ estimates}

\subsection{$C^0$ estimates}

To obtain $C^0$ estimates,
we need a comparison principle for parabolic equations. This follows from an interpolation
argument in \cite{San}, where a similar idea was applied in \cite{Va} to the Schouten equation.
\begin{lemma}\label{lemma3.1}
Let $\varphi$ and $\psi$ be the two solutions of \eqref{Evo-1} with
$\varphi(x,0)\leq \psi(x,0)$ for all $x\in \Omega$, then we have for $0<\alpha<1$,
\begin{equation*}
\varphi(x,t)\leq \psi(x,t)
\end{equation*}
for all $(x, t)\in \Omega\times [0, T]$.
\end{lemma}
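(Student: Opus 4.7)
The plan is to prove the comparison by a first-contact argument for the nonlinear parabolic equation, after a small perturbation that produces a strict supersolution. Set $\tilde\varphi_2^{\varepsilon}(x,t) := \varphi_2(x,t) + \varepsilon(1+t)$ for small $\varepsilon>0$; the goal is to show the strict inequality $\varphi_1 < \tilde\varphi_2^{\varepsilon}$ on $\bar{\Omega}\times[0,T]$ and then let $\varepsilon\downarrow 0$. Two structural facts about the right-hand side $Q$ drive the argument: (i) parabolicity, since a direct computation gives
\begin{equation*}
\frac{\partial Q}{\partial \varphi_{ij}} = \frac{\alpha}{n}\,Q\,\frac{(\mathrm{cof}\,A)^{ij}}{\det A}, \qquad A_{ij} := \sigma_{ij} - \varphi_{ij} + \varphi_i\varphi_j,
\end{equation*}
which is positive definite on admissible (convex graph) functions; and (ii) the sign of the zeroth-order derivative, $\partial Q/\partial \varphi = (\alpha-1)Q < 0$, because $0<\alpha<1$ and $Q>0$. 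Together, (ii) and the extra $\varepsilon$ in $\partial_t \tilde\varphi_2^{\varepsilon}$ make $\tilde\varphi_2^{\varepsilon}$ a strict supersolution of the evolution equation, and clearly $\tilde\varphi_2^{\varepsilon}(\cdot,0) > \varphi_1(\cdot,0)$ while the Neumann boundary condition is preserved.

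Assume for contradiction that the strict inequality fails and let $(x_0,t_0)$ with $t_0>0$ be a first contact point, $\varphi_1(x_0,t_0) = \tilde\varphi_2^{\varepsilon}(x_0,t_0)$. If $x_0 \in \Omega$ is interior, then $(x_0,t_0)$ is a local maximum of $\varphi_1-\tilde\varphi_2^{\varepsilon}$, so $D\varphi_1 = D\varphi_2$ and $D^2\varphi_1 \le D^2\varphi_2$ there. Setting $A_i := \sigma_{jk}-\varphi_{i;jk}+\varphi_{i;j}\varphi_{i;k}$, this forces $A_1 \ge A_2 > 0$ as symmetric matrices, hence $\det A_1 \ge \det A_2$, so $\det^{-\alpha/n}(A_1) \le \det^{-\alpha/n}(A_2)$. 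Combined with $|D\varphi_1| = |D\varphi_2|$ and $e^{(\alpha-1)\varphi_1} < e^{(\alpha-1)\varphi_2}$ (using $\varphi_1 = \varphi_2+\varepsilon(1+t_0)>\varphi_2$ and $\alpha-1<0$), these give $Q(\varphi_1, D\varphi_1, D^2\varphi_1) < Q(\varphi_2, D\varphi_2, D^2\varphi_2)$, and hence $(\varphi_1-\tilde\varphi_2^{\varepsilon})_t(x_0,t_0) < -\varepsilon < 0$, contradicting $(\varphi_1-\tilde\varphi_2^{\varepsilon})_t(x_0,t_0) \ge 0$ at the maximum.

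The main obstacle is the boundary case $x_0 \in \partial\Omega$: the Neumann condition shared by both solutions only yields $D_\nu(\varphi_1-\tilde\varphi_2^{\varepsilon})(x_0,t_0)=0$, and no immediate contradiction is available. I would rule it out by invoking the parabolic Hopf boundary-point lemma for oblique boundary conditions, applicable since $\partial\Omega$ is smooth and satisfies the interior-sphere condition: it forces a strict outward normal-derivative inequality at a positive maximum, contradicting the Neumann equality. An equivalent route is to introduce a further auxiliary perturbation $+\delta\psi(x)$ with $\psi\in C^2(\bar\Omega)$ chosen so that $D_\nu\psi>0$ on $\partial\Omega$; this turns the boundary equality into a strict inequality for the doubly perturbed function, forces its maximum into the interior where the previous step applies, and is removed by letting $\delta\downarrow 0$ followed by $\varepsilon\downarrow 0$.
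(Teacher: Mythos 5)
Your first-contact/barrier argument is a valid alternative to what the paper does, and it identifies exactly the same two structural facts that make the comparison work: positive definiteness of $\partial Q/\partial\varphi_{ij}$ (your cofactor formula is the same as the paper's $Q^{ij}=\frac{\alpha}{n}\dot\varphi\,w^{ij}$, since $(\mathrm{cof}\,A)^{ij}/\det A = w^{ij}$) and the sign $\partial Q/\partial\varphi=(\alpha-1)Q<0$. The paper instead linearizes globally: it writes $\psi=\varphi_1-\varphi_2$, integrates $\tfrac{d}{ds}Q(\varphi_s,D\varphi_s,D^2\varphi_s)$ over $s\in[0,1]$, and obtains a \emph{linear} parabolic equation $\partial_t\psi = a^{ij}\psi_{ij}+b^k\psi_k+c\psi$ with $a^{ij}$ positive definite (convexity of the cone of positive matrices, applied to $\int_0^1 Q^{ij}(s)\,ds$) and $c\le 0$, and then invokes the linear parabolic maximum principle together with the Hopf boundary lemma. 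The linearization route is a bit cleaner because once the linear inequality is in hand, both the interior maximum principle and the boundary Hopf lemma apply off the shelf; your pointwise route gives a more elementary and transparent interior argument but pushes the difficulty to the boundary.

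That boundary step is the one place your proposal is not yet airtight. The parabolic Hopf boundary-point lemma is a statement about solutions of \emph{linear} parabolic differential inequalities; to apply it to $\varphi_1-\tilde\varphi_2^{\varepsilon}$ you must first show this difference satisfies such an inequality near $(x_0,t_0)$, which amounts to performing the paper's mean-value linearization locally. Your alternative of adding $\delta\psi(x)$ with $D_\nu\psi>0$ is a clean way to sidestep Hopf, but then the interior first-contact computation must be redone for the doubly perturbed supersolution $\varphi_2+\varepsilon(1+t)+\delta\psi$, and one has to check that the $\delta D^2\psi$ and $\delta D\psi$ contributions do not spoil the sign of $Q_1-Q_2$; this works (choose $\delta$ small relative to $\varepsilon$ and use continuity/uniform bounds on $Q^{ij},Q^k$), but the details need to be carried out rather than asserted. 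With either of those two gaps filled, the argument is complete and gives the same conclusion as the paper.
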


\begin{proof}
Let $$\Lambda(x, t)=\varphi(x,t)-\psi(x,t).$$
Since $\varphi$ and $\psi$ are the solutions of scalar equation \eqref{Evo-1}, then
$$\Lambda(x, 0)\leq 0,$$
and
$$D_{\mu}\Lambda=D_{\mu}\varphi-D_{\mu}\psi=0.$$
For a real number $s \in [0,1]$, we set
\begin{eqnarray*}
\hat{v}_{ij}(x, t)[s]=\sigma_{ij}-s\varphi_{ij}+s\varphi_i \varphi_j-(1-s)\psi_{ij}
+(1-s)\psi_i \psi_j,
\end{eqnarray*}
which is clearly positive definite for any $s \in [0,1]$,
since the set of positive definite matrices is convex. Then, by the positive definite of $\hat{v}_{ij}(x, t)[s]$
we can apply the main theorem of calculus to get
\begin{equation}\label{compare1}
\begin{aligned}&\frac{\partial}{\partial t}\Lambda(x, t)=
\frac{\partial}{\partial t}\varphi(x,t)-\frac{\partial}{\partial t}\psi(x,t)\\
=&\int_{0}^{1}\frac{d}{ds}\bigg(e^{(\alpha-1)(s\varphi+(1-s)\psi)}
\frac{(1+|D(s\varphi+(1-s)\psi)|^2)^{\frac{\beta}{n}}\det^{\frac{\alpha}{n}}(\sigma_{ij})}
{\det^{\frac{\alpha}{n}}(\sigma_{ij}-s\varphi_{ij}+s\varphi_i \varphi_j-(1-s)\psi_{ij}
+(1-s)\psi_i \psi_j)}\bigg)ds.\end{aligned}
\end{equation}
Next, we do some calculations to get the derivative in the integral \eqref{compare1}.
Using the formula for the derivative of the determinant, we have
\begin{eqnarray*}
\frac{d}{d s}\det(\hat{v}_{kl})
&=&\det(\hat{v}_{kl})\hat{v}^{ij}\frac{d}{d s}\hat{v}_{ij}\\&=&
\det(\hat{v}_{kl})\hat{v}^{ij}(-\varphi_{ij}+\varphi_i \varphi_j+\psi_{ij}
-\psi_i \psi_j)\\&=&\det(\hat{v}_{kl})\hat{v}^{ij}(-\Lambda_{ij}
+\Lambda_i(\varphi_j+ \psi_j)),
\end{eqnarray*}
where $\hat{v}^{ij}$ denotes the inverse of $\hat{v}_{ij}$, which is also positive, and where the symmetry of $\hat{v}^{ij}$ yields
\begin{eqnarray*}
\hat{v}^{ij}(\varphi_i \varphi_j-\psi_i \psi_j)
=\hat{v}^{ij}(\varphi_i-\psi_i)(\varphi_j+\psi_j)=\hat{v}^{ij}\Lambda_i(\varphi_j+ \psi_j).
\end{eqnarray*}
And
\begin{eqnarray*}
&&\frac{d}{d s}(1+|D(s\varphi+(1-s)\psi)|^2)^{\frac{\beta}{n}}\\
&=&\frac{\beta}{n}(1+|D(s\varphi+(1-s)\psi)|^2)^{\frac{\beta}{n}-1}
\frac{d}{d s}|D(s\varphi+(1-s)\psi)|^2\\&=&2\frac{\beta}{n}(1+|D(s\varphi+(1-s)\psi)|^2)^{\frac{\beta}{n}-1}
\sigma^{ij}\Lambda_i(s\varphi_j+(1-s)\psi_j),
\end{eqnarray*}

\begin{eqnarray*}
\frac{d}{d s}e^{(\alpha-1)(s\varphi+(1-s)\psi)}=(\alpha-1)e^{(\alpha-1)(s\varphi+(1-s)\psi)}\Lambda.
\end{eqnarray*}
Based on these calculations, we obtain
\begin{eqnarray*}
&&\frac{d}{d s}\ln\bigg(e^{(\alpha-1)(s\varphi+(1-s)\psi)}
\frac{(1+|D(s\varphi+(1-s)\psi)|^2)^{\frac{\beta}{n}}\det^{\frac{\alpha}{n}}(\sigma_{ij})}
{\det^{\frac{\alpha}{n}}(\sigma_{ij}-s\varphi_{ij}+s\varphi_i \varphi_j-(1-s)\psi_{ij}
+(1-s)\psi_i \psi_j)}\bigg)\\&=&\frac{\alpha}{n}\bigg(\hat{v}^{ij}(\Lambda_{ij}
-\Lambda_i(\varphi_j+ \psi_j))\bigg)+2\frac{\beta}{n}(1+|D(s\varphi+(1-s)\psi)|^2)^{-1}
\sigma^{ij}\Lambda_i(s\varphi_j+(1-s)\psi_j)\\&&+(\alpha-1)\Lambda.
\end{eqnarray*}
Introducing the following notation for the positive definite
coefficient matrix of the second derivative
\begin{eqnarray*}
a^{ij}(x, t)=\frac{\alpha}{n}\hat{v}^{ij}\int_{0}^{1}e^{(\alpha-1)(s\varphi+(1-s)\psi)}
\frac{(1+|D(s\varphi+(1-s)\psi)|^2)^{\frac{\beta}{n}}\det^{\frac{\alpha}{n}}(\sigma_{kl})}
{\det^{\frac{\alpha}{n}}(\hat{v}_{kl})}d s
\end{eqnarray*}
and
\begin{eqnarray*}
b^i(x, t)&=&-a^{ij}(\varphi_j+ \psi_j)\\&&+\int_{0}^{1}e^{(\alpha-1)(s\varphi+(1-s)\psi)}
\frac{(1+|D(s\varphi+(1-s)\psi)|^2)^{\frac{\beta}{n}}\det^{\frac{\alpha}{n}}(\sigma_{kl})}
{\det^{\frac{\alpha}{n}}(\hat{v}_{kl})}d s\\&&\cdot 2\frac{\beta}{n}(1+|D(s\varphi+(1-s)\psi)|^2)^{-1}
\sigma^{ij}(s\varphi_j+(1-s)\psi_j)
\end{eqnarray*}
and
\begin{eqnarray*}
c(x, t)=(\alpha-1)\int_{0}^{1}e^{(\alpha-1)(s\varphi+(1-s)\psi)}
\frac{(1+|D(s\varphi+(1-s)\psi)|^2)^{\frac{\beta}{n}}\det^{\frac{\alpha}{n}}(\sigma_{kl})}
{\det^{\frac{\alpha}{n}}(\hat{v}_{kl})}d s.
\end{eqnarray*}
Then in view of \eqref{compare1},
\begin{equation*}
\left\{
\begin{aligned}
&\frac{\partial \Lambda}{\partial t}=a^{ij}(x, t)\Lambda_{ij}+b^{k}(x,t)\Lambda_k
+c(x, t)\Lambda=0
&& in ~\Omega\times(0,T),\\
&D_\mu \Lambda=0 && on ~\partial \Omega\times(0,T),\\
&\Lambda(\cdot,0)\leq 0 && in ~ \Omega,
\end{aligned}
\right.
\end{equation*}
here the matrix $a^{ij}(x, t)$ is positive definite and $c(x,t)\leq 0$. By using the parabolic maximum principle and Hopf'lemma,
we can conclude that $\Lambda(x, t)$
has to be nonpositive for all $t\in (0, T)$.
\end{proof}

Applying Lemma \ref{lemma3.1}, we can compare the solution of \eqref{Evo-1} with its radical solution.
\begin{lemma}
Let $\varphi$ be the solution of the parabolic system \eqref{Evo-1}, then we have,
\begin{equation}\label{C^0}
\frac{1}{1-\alpha}\ln((1-\alpha)t+e^{(1-\alpha)\varphi_1})\leq\varphi(x, t)
\leq\frac{1}{1-\alpha}\ln((1-\alpha)t+e^{(1-\alpha)\varphi_2})
\end{equation}
where $0<\alpha<1$, $\varphi_1=\inf_{\overline{\Omega}} \varphi(\cdot,0)$ and
$\varphi_2=\sup_{\overline{\Omega}}\varphi(\cdot,0)$.
\end{lemma}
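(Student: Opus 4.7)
The plan is to apply the Comparison Theorem (Lemma \ref{lemma3.1}) against spatially constant comparison functions, which geometrically correspond to concentric spherical caps evolving by homothety inside the cone. First I would test the ansatz $\bar\varphi = \bar\varphi(t)$ in the scalar equation \eqref{Evo-1}. Since all spatial derivatives vanish, the Neumann condition $D_\mu\bar\varphi=0$ is trivially satisfied, the admissibility matrix $\sigma_{ij}-\bar\varphi_{ij}+\bar\varphi_i\bar\varphi_j$ reduces to the round metric $\sigma_{ij}$ (hence positive definite), and the operator $Q$ collapses to
\[
Q(\bar\varphi,0,0)=e^{(\alpha-1)\bar\varphi}.
\]
Thus a spatially constant function solves \eqref{Evo-1} if and only if $\bar\varphi'(t)=e^{(\alpha-1)\bar\varphi(t)}$.

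Next I would integrate this ODE explicitly. The substitution $w(t)=e^{(1-\alpha)\bar\varphi(t)}$ gives $w'(t)=1-\alpha$, so $w(t)=(1-\alpha)t+w(0)$, i.e.
\[
\bar\varphi(t)=\frac{1}{1-\alpha}\ln\!\bigl((1-\alpha)t+e^{(1-\alpha)\bar\varphi(0)}\bigr).
\]
This is precisely the function appearing on both sides of \eqref{C^0}, with initial datum $\bar\varphi(0)=\varphi_1$ on the left and $\bar\varphi(0)=\varphi_2$ on the right.

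Finally I would invoke Lemma \ref{lemma3.1} twice. Setting $\underline\varphi(x,t):=\frac{1}{1-\alpha}\ln((1-\alpha)t+e^{(1-\alpha)\varphi_1})$ produces an admissible solution of \eqref{Evo-1} satisfying $\underline\varphi(\cdot,0)=\varphi_1\leq \varphi(\cdot,0)$; the Comparison Theorem then gives $\underline\varphi\leq\varphi$ on $\Omega\times[0,T]$, which is the left inequality in \eqref{C^0}. The right inequality is obtained symmetrically with $\varphi_2$ in place of $\varphi_1$.

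There is no genuine obstacle: the only things to verify are that the constant-in-space comparison function is admissible (immediate, since $\sigma_{ij}>0$) and that it meets the boundary condition (trivial, since its spatial gradient vanishes). The core content is the explicit integration of the radial ODE and the observation that $0<\alpha<1$ is exactly what makes the substitution $w=e^{(1-\alpha)\varphi}$ monotone and well-defined.
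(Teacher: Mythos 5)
Your proposal is correct and mirrors the paper's own proof: both reduce \eqref{Evo-1} to the ODE $\bar\varphi'=e^{(\alpha-1)\bar\varphi}$ via a spatially constant ansatz, integrate it explicitly, and then apply the Comparison Theorem (Lemma \ref{lemma3.1}) against the two constant initial data $\varphi_1$ and $\varphi_2$. Your version is slightly more careful in spelling out that the constant ansatz is admissible and satisfies the Neumann condition, but the argument is the same.
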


\begin{proof}
Let $\varphi(x, t)=\varphi(t)$ (independent of $x$) be  the solution of \eqref{Evo-1} with $\varphi(0)=c$.
In this case, the equation \eqref{Evo-1} is reduced to an ODE
\begin{eqnarray}\label{18121901}
\frac{d}{d t}\varphi=e^{(\alpha-1)\varphi}.
\end{eqnarray}
By solving \eqref{18121901} directly,
\begin{eqnarray}\label{blow}
\varphi(t)=\frac{1}{1-\alpha}\ln((1-\alpha)t+e^{(1-\alpha)c})
\ \ \mbox{for}\ \ 0<\alpha<1.
\end{eqnarray}
Our lemma is an immediate consequence of Lemma \ref{lemma3.1}.
\end{proof}

\begin{remark}
From \eqref{blow}, we know that $\varphi(t)\rightarrow\infty$ in finite time if $\alpha>1$. Thus,
if the initial hypersurface is a sphere, the flow will blow up in finite time for $\alpha>1$.
However, similar results of
Theorem \ref{main1.1} are still expected if $\alpha>1$ as that done in \cite{Ge14}. This is will be pursued in the future paper.
\end{remark}

\begin{corollary}
If $\varphi$ satisfies \eqref{Evo-1}, then we have in the case of $0<\alpha<1$
\begin{equation*}
c_1\leq u(x, t) \Theta^{-1}(t, c)
\leq c_2, \qquad\quad \forall~ x\in
\Omega, \ t\in[0,T],
\end{equation*}
where $\Theta(t, c)=\{(1-\alpha)t+e^{(1-\alpha)c}\}^{\frac{1}{1-\alpha}}$
and $\inf_{\overline{\Omega}} \varphi(\cdot,0)\leq c\leq \sup_{\overline{\Omega}} \varphi(\cdot,0)$.
\end{corollary}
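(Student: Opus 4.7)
The plan is to derive everything from the two-sided bound \eqref{C^0} by exponentiation and elementary monotonicity, so no genuinely new estimate is required—this is simply a rewording of the previous lemma in terms of $u$ instead of $\varphi$.

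\textbf{Step 1: Exponentiate the previous lemma.} Since $u = e^{\varphi}$ and $\Theta(c,t) = \{(1-\alpha)t + e^{(1-\alpha)c}\}^{1/(1-\alpha)}$, the inequality \eqref{C^0} can be rewritten, upon applying $e^{(\,\cdot\,)}$ to every term, as
\begin{equation*}
\Theta(\varphi_1,t) \;\leq\; u(x,t) \;\leq\; \Theta(\varphi_2,t) \qquad \forall\, (x,t)\in\Omega\times[0,T].
\end{equation*}
Dividing by $\Theta(c,t)$ for any $c\in[\varphi_1,\varphi_2]$ yields
\begin{equation*}
\frac{\Theta(\varphi_1,t)}{\Theta(c,t)} \;\leq\; u(x,t)\,\Theta^{-1}(c,t) \;\leq\; \frac{\Theta(\varphi_2,t)}{\Theta(c,t)},
\end{equation*}
so the problem reduces to showing that each of these ratios is bounded above and below by a constant independent of $t\in[0,T]$ (actually independent of $T$).

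\textbf{Step 2: Uniform bounds on the ratios.} For $\gamma \in \{\varphi_1,\varphi_2\}$ introduce
\begin{equation*}
R_\gamma(t) \;=\; \frac{\Theta(\gamma,t)}{\Theta(c,t)} \;=\; \left(\frac{(1-\alpha)t+e^{(1-\alpha)\gamma}}{(1-\alpha)t+e^{(1-\alpha)c}}\right)^{\!1/(1-\alpha)}.
\end{equation*}
A direct differentiation (or just inspection of numerator and denominator, which differ by a constant) shows that $R_\gamma$ is monotone in $t$: nondecreasing if $\gamma \leq c$ and nonincreasing if $\gamma \geq c$. At the endpoints one has $R_\gamma(0)=e^{\gamma-c}$ and $\lim_{t\to\infty}R_\gamma(t)=1$. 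Hence
\begin{equation*}
e^{\varphi_1-c} \;\leq\; R_{\varphi_1}(t) \;\leq\; 1 \;\leq\; R_{\varphi_2}(t) \;\leq\; e^{\varphi_2-c}
\end{equation*}
for every $t\geq 0$, and the corollary holds with $c_1 = e^{\varphi_1-c}$ and $c_2 = e^{\varphi_2-c}$.

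\textbf{Step 3: Compactness for finite $t$.} The upper bound $u(x,t)\leq \Theta(\varphi_2,t)$ shows that at each finite $t\in[0,T]$ the graph $M_t$ sits inside the ball of radius $\Theta(\varphi_2,t)$ centered at the origin, so the portion of the cone containing $M_t$ is a bounded piece of $\mathbb{R}^{n+1}$. This gives the claimed compact containment.

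There is no serious obstacle: the hard analytic work was done in Lemma~\ref{lemma3.1} and the $C^{0}$-lemma preceding this corollary, and what remains is merely to translate $\log$-bounds on $\varphi$ back to multiplicative bounds on $u$ and to observe monotonicity of an explicit one-variable function.
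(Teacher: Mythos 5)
Your proof is correct. The paper gives no proof for this corollary (it is stated as an immediate consequence of the preceding $C^0$ lemma), and your argument is the natural and essentially unique one: exponentiating \eqref{C^0} to get $\Theta(\varphi_1,t)\leq u\leq\Theta(\varphi_2,t)$, dividing by $\Theta(c,t)$, and checking that the resulting ratios are pinned between $1$ and $e^{\gamma-c}$ by the explicit monotonicity of $s\mapsto\bigl((s+a)/(s+b)\bigr)^{1/(1-\alpha)}$. Your identification $c_1=e^{\varphi_1-c}$, $c_2=e^{\varphi_2-c}$ is consistent and manifestly $t$-independent, and the compactness statement follows immediately from the upper bound $u\leq\Theta(\varphi_2,t)<\infty$ for finite $t$. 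No gaps.
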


\subsection{$\dot{\varphi}$ estimates}
In this section, we shall show that $\dot{\varphi}(x, t)\Theta(t)^{1-\alpha}$ keeps
bounded during the flow. For convenience, set
\begin{eqnarray*}
w_{ij}=\sigma_{ij}-\varphi_{ij}+\varphi_i \varphi_j,
\end{eqnarray*}
$w^{ij}$ be the inverse of $w_{ij}$ and $\dot{\varphi}=\frac{\partial \varphi}{\partial t}$.
Using the evolution equation of
\eqref{Evo-1}, we obtain the following identity which will be used later,
\begin{eqnarray}\label{4.0}
\frac{\partial}{\partial t}Q&=&Q^{ij}\frac{\partial}{\partial t}\varphi_{ij}
+Q^{k}\frac{\partial}{\partial t}\varphi_k+(\alpha-1)Q\frac{\partial}{\partial t}\varphi
\nonumber\\&=&Q^{ij}\dot{\varphi}_{ij}
+Q^{k}\dot{\varphi}_k+(\alpha-1)Q\dot{\varphi},
\end{eqnarray}
where
\begin{eqnarray*}
Q^{ij}:=\frac{\partial Q}{\partial \varphi_{ij}}=\frac{\alpha}{n}\dot{\varphi}w^{ij}
\end{eqnarray*}
and
\begin{eqnarray*}
Q^k:=\frac{\partial Q}{\partial \varphi_{k}}=\frac{2\dot{\varphi}}{n}
\bigg(\frac{\beta}{1+|D\varphi|^2}\sigma^{kl}-\alpha
w^{kl}\bigg)\varphi_l.
\end{eqnarray*}
Obviously, since $w_{ij}$ positive definite, the matrix $Q^{ij}(x, t)$ is also positive definite.

\begin{lemma}\label{lemma3.2}
Let $\varphi$ be a solution of
\eqref{Evo-1}, then we have for $0<\alpha<1$
\begin{eqnarray*}
\min\{\inf_{\overline{\Omega}}\dot{\varphi}(\cdot,
0)\cdot\Theta(0)^{1-\alpha}, 1\} \leq \dot{\varphi}(x,
t)\Theta(t)^{1-\alpha}\leq
\max\{\sup_{\overline{\Omega}}\dot{\varphi}(\cdot,
0)\cdot\Theta(0)^{1-\alpha}, 1\}.
\end{eqnarray*}
\end{lemma}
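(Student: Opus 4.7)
The plan is to differentiate the evolution equation in $t$, rescale $\dot\varphi$ by the factor $\Theta^{1-\alpha}$ so that the resulting quantity $w=\dot\varphi\,\Theta^{1-\alpha}$ satisfies a linear parabolic equation of ``reaction'' type pinned to the equilibrium $w\equiv 1$, and then apply the parabolic maximum principle together with the Hopf boundary lemma, exactly in the style of Lemma \ref{lemma3.1}.

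First I would differentiate $\partial_t\varphi=Q(\varphi,D\varphi,D^2\varphi)$ with respect to $t$. Since the only $\varphi$-dependence in $Q$ is through the prefactor $e^{(\alpha-1)\varphi}$, we have $Q_\varphi=(\alpha-1)Q$, so
$$
\partial_t\dot\varphi \;=\; Q^{ij}\dot\varphi_{ij}+Q^{k}\dot\varphi_{k}+(\alpha-1)\dot\varphi^{2},
$$
and differentiating the Neumann condition $D_\mu\varphi=0$ in $t$ yields $D_\mu\dot\varphi=0$ on $\partial\Omega\times(0,T)$. Now set $w(x,t):=\dot\varphi(x,t)\,\Theta(t)^{1-\alpha}$. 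Because $\Theta^{1-\alpha}(t)=(1-\alpha)t+e^{(1-\alpha)c}$ satisfies $(\Theta^{1-\alpha})'=1-\alpha$, and because $\Theta^{1-\alpha}\dot\varphi^{2}=\dot\varphi\,w$, a direct computation gives the crucial identity
$$
\partial_t w \;=\; Q^{ij}w_{ij}+Q^{k}w_{k}+(1-\alpha)\dot\varphi\,(1-w),\qquad D_\mu w\big|_{\partial\Omega}=0.
$$
The reaction term has the sign that drives $w$ toward the constant $1$, which is the conceptual heart of the argument.

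Next I would apply the parabolic maximum principle plus Hopf. For the upper bound, set $C:=\max\{\sup_{\bar\Omega}w(\cdot,0),\,1\}$ and $\tilde w:=w-C$. A direct substitution gives
$$
L\tilde w \;:=\; \partial_t\tilde w - Q^{ij}\tilde w_{ij}-Q^{k}\tilde w_{k}+(1-\alpha)\dot\varphi\,\tilde w \;=\;(1-\alpha)\dot\varphi\,(1-C)\;\le\;0,
$$
together with $\tilde w(\cdot,0)\le 0$ and $D_\mu\tilde w|_{\partial\Omega}=0$. Since $(Q^{ij})$ is positive definite (by the preserved convexity) and the zero-order coefficient $(1-\alpha)\dot\varphi$ is non-negative, a positive interior maximum of $\tilde w$ is ruled out by the standard parabolic maximum principle, and a positive boundary maximum is ruled out by the Hopf boundary lemma (which would force $D_\mu\tilde w>0$, contradicting the Neumann condition; the convexity of $\Sigma^n$ provides the required interior sphere condition at $\partial\Omega$). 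Hence $\tilde w\le 0$, i.e.\ $w\le C$. The lower bound is symmetric: apply the same argument to $c_{0}-w$ with $c_{0}:=\min\{\inf_{\bar\Omega}w(\cdot,0),\,1\}$.

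The main obstacle is essentially cosmetic: one must discover the correct rescaling factor $\Theta^{1-\alpha}$ so that the nonlinear term $(\alpha-1)\dot\varphi^{2}$ coming from the linearization cancels against the time derivative of the weight $\Theta^{1-\alpha}$ in exactly the right way to produce the attracting reaction $(1-\alpha)\dot\varphi(1-w)$. This is where the $\alpha=1$ case (scale invariance, $Q$ independent of $\varphi$) differs qualitatively from $0<\alpha<1$. Once the pinning equation is in place, positivity of $\dot\varphi=Q>0$ (guaranteed by convexity) makes the zero-order coefficient of $L$ non-negative, and the rest is the standard comparison machinery already used in the proof of Lemma \ref{lemma3.1}.
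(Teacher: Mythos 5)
Your proposal is correct and takes essentially the same route as the paper: rescale to $w=\dot\varphi\,\Theta^{1-\alpha}$, verify that it satisfies the parabolic equation with reaction term $(1-\alpha)\Theta^{\alpha-1}(1-w)w=(1-\alpha)\dot\varphi(1-w)$ pinned at the equilibrium $w\equiv 1$, and then apply the parabolic maximum principle together with Hopf's lemma at the Neumann boundary. Your version is a bit more explicit (introducing $\tilde w=w-C$ and computing $L\tilde w=(1-\alpha)\dot\varphi(1-C)\le 0$), whereas the paper argues directly from the sign of the reaction coefficient, but the two arguments are identical in substance.
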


\begin{proof}
Set
\begin{eqnarray*}
M(x,t)=\dot{\varphi}(x, t)\Theta(t)^{1-\alpha},
\end{eqnarray*}
then, we have by a direct computation
\begin{eqnarray*}
\frac{\partial}{\partial t}M(x,t)&=&\Theta(t)^{1-\alpha}\frac{\partial}{\partial t}Q
+(1-\alpha)\Theta(t)^{-\alpha}Q\frac{d}{dt}\Theta(t)
\\&=&\Theta(t)^{1-\alpha}\frac{\partial}{\partial t}Q
+(1-\alpha)Q
\end{eqnarray*}
in view of
\begin{eqnarray*}
\frac{d}{dt}\Theta(t)=\Theta(t)^{\alpha}.
\end{eqnarray*}
Thus, by using \eqref{4.0} we have
\begin{equation*}
\left\{
\begin{aligned}
&\frac{\partial M}{\partial
t}= Q^{ij}M_{ij}+Q^{k}M_k+(1-\alpha)\Theta^{\alpha-1}(1-M)M
&& in ~\Omega\times(0,T),\\
&D_ \mu M=0 && on ~\partial \Omega\times(0,T),\\
&M(\cdot,0)=\dot{\varphi}_0\cdot\Theta(0)^{1-\alpha}&& in ~ \Omega.
\end{aligned}
\right.
\end{equation*}
For the lower bound, on the domain $\Omega_1=\{(x, t) \in \Omega: M(x, t)<1\}$, we have
$$(1-\alpha)\Theta^{\alpha-1}(1-M(x, t))M(x, t)\geq 0.$$
 Using Hopf's Lemma, we get
\begin{eqnarray*}
M(x,t)\geq\inf_{\mathbb{S}^n}\dot{\varphi}(\cdot, 0)\cdot\Theta(0)^{1-\alpha}
\end{eqnarray*}
and
\begin{eqnarray*}
M(x,t)\geq\inf_{\partial \Omega_1}M(x, t)=1,
\end{eqnarray*}
which implies
\begin{eqnarray*}
M(x,t)\geq\min\{\inf_{\mathbb{S}^n}\dot{\varphi}(\cdot, 0)\cdot\Theta(0)^{1-\alpha}, 1\}.
\end{eqnarray*}
Similarly, we have
\begin{eqnarray*}
M(x,t)\leq\max\{\sup_{\mathbb{S}^n}\dot{\varphi}(\cdot, 0)\cdot\Theta(0)^{1-\alpha}, 1\}.
\end{eqnarray*}
Therefore, we complete the proof.
\end{proof}

\subsection{Gradient Estimates}
\begin{lemma}\label{Gradient}
Let $\varphi$ be a solution of
\eqref{Evo-1}, then we have for $0<\alpha<1$,
\begin{equation}\label{Gra-est-10}
|D\varphi|\leq \sup_{\overline{\Omega}}|D\varphi(\cdot, 0)|\cdot\frac{c^c}{[(1-\alpha)t+c]^c}, \qquad\quad \forall~ x\in
\Omega, \ t\in[0,T],
\end{equation}
where $c$ is a positive constant.
In particular, $|D\varphi|$ is bounded,
\begin{equation}\label{Gra-est}
|D\varphi|\leq \sup_{\overline{\Omega}}|D\varphi(\cdot, 0)|, \qquad\quad \forall~ x\in
\Omega, \ t\in[0,T].
\end{equation}
\end{lemma}

\begin{proof}
Set $\psi=\frac{|D\varphi|^2}{2}$. By differentiating the $\psi$,we have
\begin{equation*}
\begin{aligned}
\frac{\partial \psi}{\partial t}
&=(\frac{\partial}{\partial t}\varphi_m) \varphi^m\\
&=(\dot{\varphi})_m \varphi^m\\
&=Q_m \varphi^m,
\end{aligned}
\end{equation*}
where $Q_m$ is the covariant differentiation of $Q$.
Then,
\begin{eqnarray*}
\frac{\partial \psi}{\partial t}=Q^{ij}\varphi_{ijm} \varphi^m
+Q^k\varphi_{km} \varphi^m+(\alpha-1)Q|D \varphi|^2.
\end{eqnarray*}
Interchanging the covariant derivatives, we have
\begin{equation*}
\begin{aligned}
\psi_{ij}&=(\varphi_{mi} \varphi^m)_{j}\\&=\varphi_{mij} \varphi^m+\varphi_{mi} \varphi^{m}_{\ j}\\
&=\varphi_{imj} \varphi^m+\varphi_{mi} \varphi^{m}_{\ j}\\
&=\varphi_{ijm} \varphi^m+\sigma_{ij}|D\varphi|^2-\varphi_i\varphi_j+\varphi_{mi} \varphi^{m}_{\ j}
\end{aligned}
\end{equation*}
in view of \eqref{3C}. Thus, we have
\begin{equation}\label{grass}
\begin{aligned}
\frac{\partial \psi}{\partial t}=&Q^{ij}\psi_{ij}+Q^k\psi_k
-Q^{ij}(\sigma_{ij}|D\varphi|^2-\varphi_i \varphi_j)\\&-Q^{ij}\varphi_{mi} \varphi^{m}_{\ j}+(\alpha-1)Q|D \varphi|^2
.\end{aligned}\end{equation}
Next,  we shall consider the boundary condition. Choosing an orthonormal frame and
$e_1,e_2,\cdot\cdot\cdot,e_{n-1}\in T_x \Omega$
and $e_n=\mu$. Using Neumann boundary condition $D_ \mu \varphi =0$,
we have
\begin{equation}\label{010601}
\begin{aligned}
D_ \mu \psi&=D_{e_n}\psi=\sum_{i=1}^{n-1}D^2\varphi(e_i,e_n)D_{e_i}\varphi\\
&=\sum_{i=1}^{n-1}(D_{e_i}D_{e_n}\varphi-(D_{e_i}e_n)\varphi)D_{e_i}\varphi\\
&=-\sum_{i=1}^{n-1}((D_{e_i}e_n)\varphi) D_{e_i}\varphi\\&=
-\sum_{i=1}^{n-1}\langle D_{e_i}e_n,e_j\rangle D_{e_j}\varphi D_{e_i}\varphi\\
&=-\sum_{i=1}^{n-1}h_{ij}^{\partial \Omega}D_{e_i}\varphi D_{e_j}\varphi\\&\leq 0
,\end{aligned}\end{equation}
where $h_{ij}^{\partial \Omega}$ is the second fundamental form of $\partial\Omega$
and it is a positive definite, since $\Omega$ is convex.

Since the matrix $Q^{ij}$ and $\sigma_{ij}|D\varphi|^2-\varphi_i \varphi_j$ are
positive definite, the third and forth  terms
in the right of \eqref{grass} are non-positive. And noticing that the fifth term
in the right of \eqref{grass} can be estimated if $0<\alpha<1$ by using Lemma \ref{lemma3.2}
\begin{eqnarray*}
(\alpha-1)Q|D\varphi|^2=(\alpha-1)\Theta^{\alpha-1}Q\Theta^{1-\alpha}\psi\leq
(\alpha-1)\frac{c}{(1-\alpha)t+c}\psi.
\end{eqnarray*}
So we got the equation about $\psi$ as follows:
\begin{equation*}
\left\{
\begin{aligned}
&\frac{\partial \psi}{\partial t}\leq Q^{ij}\psi_{ij}+Q^k \psi_k-\frac{c(1-\alpha)}{(1-\alpha)t+c}\psi   &&in~
\Omega\times(0,\infty),\\
&D_\mu \psi \leq 0&&on~\partial \Omega\times(0,\infty),\\
&\psi(\;,0)=\frac{|D\varphi(\;,0)|^2}{2} &&in~\Omega.
\end{aligned}\right .\end{equation*}
Using maximum principle and Hopf'lemma, we get gradient estimates of $\varphi$.
\end{proof}

As a direct corollary, we have

\begin{corollary}
Under the assumption of Theorem \ref{main1.1}, the evolving surfaces $M_t$ are graphs over $\Omega \subset\mathbb{S}^n$.
\end{corollary}

\begin{proof}
We just need show
\begin{eqnarray*}
\langle \frac{X}{|X|}, \nu\rangle=\frac{1}{v}
\end{eqnarray*}
is bounded from above which is clearly implied by the gradient estimate \eqref{Gra-est}.
Thus, the leaves $M_t$ are star-shaped about the origin which implies that they are graphs
over $\Omega\subset\mathbb{S}^n$.
\end{proof}

Combing the gradient estimate \eqref{Gra-est} with $\dot{\varphi}$ estimate (Lemma \ref{lemma3.2}), we obtain

\begin{corollary}
If $\varphi$ satisfies \eqref{Evo-1}, then we have for $0<\alpha<1$
\begin{eqnarray}\label{w-ij}
0<c_1\leq \det(\sigma_{ij}-\varphi_{ij}+\varphi_i \varphi_j)
\leq c_2<+\infty,
\end{eqnarray}
where $c_1$ and $c_2$ are positive constants independing on $\varphi$.
\end{corollary}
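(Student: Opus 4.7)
The plan is to read off the determinant from the evolution equation and then bound each remaining factor using the estimates already established. Recall that the scalar equation reads
\begin{equation*}
\dot\varphi = Q(\varphi, D\varphi, D^2\varphi) = e^{(\alpha-1)\varphi}(1+|D\varphi|^2)^{\beta/n}\,\frac{\det^{\alpha/n}(\sigma_{ij})}{\det^{\alpha/n}(\sigma_{ij}-\varphi_{ij}+\varphi_i \varphi_j)}.
\end{equation*}
Solving for the Hessian-type determinant gives
\begin{equation*}
\det^{\alpha/n}(\sigma_{ij}-\varphi_{ij}+\varphi_i\varphi_j) = \frac{e^{(\alpha-1)\varphi}(1+|D\varphi|^2)^{\beta/n}\,\det^{\alpha/n}(\sigma_{ij})}{\dot\varphi},
\end{equation*}
so the corollary reduces to bounding the right-hand side above and below by positive constants.

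First, by the gradient estimate \eqref{Gra-est}, the factor $(1+|D\varphi|^2)^{\beta/n}$ is controlled by a constant depending only on the initial data. The factor $\det(\sigma_{ij})$ is the determinant of the round metric in local coordinates on $\Omega$, so it is bounded between positive constants on $\overline\Omega$. The essential task is therefore to show that the ratio $e^{(\alpha-1)\varphi}/\dot\varphi$ is bounded above and below by positive constants.

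This is precisely the point where the $C^0$ estimate and the $\dot\varphi$ estimate work together. By the $C^0$ corollary, $u = e^\varphi$ satisfies $c_1 \Theta(t) \le u \le c_2 \Theta(t)$, hence (using $\alpha - 1 < 0$)
\begin{equation*}
C_1\,\Theta(t)^{\alpha-1} \le e^{(\alpha-1)\varphi} \le C_2\,\Theta(t)^{\alpha-1}.
\end{equation*}
On the other hand, Lemma \ref{lemma3.2} gives $\dot\varphi = M\,\Theta(t)^{\alpha-1}$ with $M$ pinched between two positive constants. Dividing produces the desired cancellation: $e^{(\alpha-1)\varphi}/\dot\varphi$ lies between two positive constants independent of $t$.

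Combining these three observations and taking the $n/\alpha$-th power yields \eqref{w-ij}. The only subtle point, which is really the reason this corollary can be stated at this stage rather than buried inside the $C^2$ estimates, is recognizing that the algebraic powers of $\Theta(t)$ in the $\dot\varphi$ estimate were tuned exactly so that they cancel against $e^{(\alpha-1)\varphi}$; aside from this observation, the argument is just a direct substitution of the bounds from the preceding sections.
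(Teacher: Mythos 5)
Your proof is correct and is precisely the argument the paper intends: isolate $\det^{\alpha/n}(w_{ij})$ from the evolution equation and bound the remaining factors, with the key cancellation being $e^{(\alpha-1)\varphi}/\dot\varphi\sim\Theta^{\alpha-1}/\Theta^{\alpha-1}$. You rightly notice that the paper's phrase ``combining the gradient estimates with $\dot\varphi$ estimate'' tacitly also uses the $C^0$ estimate to control $e^{(\alpha-1)\varphi}$; apart from making that ingredient explicit, your argument matches the paper's.
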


\section{$C^2$ Estimates}

\

In this section, we shall obtain a priori estimates for second order derivative of $\varphi$.
\begin{theorem}\label{C^2}
Let $\varphi$ be a solution of the flow \eqref{Evo-1} and $0<\alpha<1$.
Then, there exists $C=C(n, M_0)$
such that
\begin{equation*}
|D^2\varphi(x, t)|\leq C(n, M_0),
\qquad \ \ \forall (x,
t)\in \overline{\Omega}\times [0, T^{*}).
\end{equation*}
\end{theorem}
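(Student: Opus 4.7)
The plan is to bound the largest eigenvalue of the symmetric tensor $w_{ij}:=\sigma_{ij}-\varphi_{ij}+\varphi_i\varphi_j$, which is already positive definite (by convexity) with $\det(w_{ij})$ pinched between two positive constants by \eqref{w-ij}. Once an upper bound on the eigenvalues of $w_{ij}$ is established, the lower bound on $\det(w_{ij})$ and the gradient estimate \eqref{Gra-est} immediately give $|D^2\varphi|\leq C$. I would introduce an auxiliary function of the form
$$W(x,t,\xi)=\log\bigl(w_{ij}\xi^i\xi^j\bigr)+\phi(|D\varphi|^2)+\Phi(\varphi)$$
on the unit tangent bundle of $\overline{\Omega}\times[0,T]$, with $\phi,\Phi$ to be chosen, and argue that $\max W$ is controlled by the initial data. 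At a maximum point $(x_0,t_0,\xi_0)$, after rotating so that $w_{ij}(x_0,t_0)$ is diagonal and $\xi_0=e_1$, the problem is reduced to bounding $\log w_{11}(x_0,t_0)$.

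\textbf{Interior case.} If $x_0$ is an interior point, at $(x_0,t_0)$ we have $\partial_t W\geq 0$, $D_k W=0$ and $(D_{ij}W)\leq 0$. I would differentiate $\varphi_t=Q$ twice in the direction $e_1$, commuting covariant derivatives via the Ricci identity on $\mathbb{S}^n$ (with $R_{limj}=\sigma_{lm}\sigma_{ij}-\sigma_{lj}\sigma_{im}$), to derive an evolution equation for $w_{11}$. Substituting it into the linearised operator $(\partial_t-Q^{ij}D_{ij})$ applied to $W$, the concavity of $(\det)^{\alpha/n}$ in the argument $w_{ij}$ supplies a non-positive third-order remainder, while the favourable sign $(\alpha-1)Q<0$ (this is where $0<\alpha<1$ is used decisively) together with the gradient term $\phi$ can be arranged to absorb the remaining bad terms; choosing $\phi(s)=Ks/2$ and $\Phi(\varphi)=-K'\varphi$ with $K,K'$ sufficiently large forces $w_{11}(x_0,t_0)\leq C$.

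\textbf{Boundary case.} If $x_0\in\partial\Omega$, decompose $\xi_0=\tau+s\mu$ with $\tau$ tangent to $\partial\Omega$ and $\mu$ the outer unit normal to $\partial\Omega\subset\mathbb{S}^n$. In the purely tangential subcase, differentiating the Neumann condition $D_\mu\varphi=0$ twice in the direction $\tau$ brings in the second fundamental form $h^{\partial\Omega}_{ij}$ of $\partial\Omega$, and the strict convexity of $\Sigma^n$ (equivalently of $\partial\Omega$) provides the favourable sign needed to make the required Hopf inequality $D_\mu W\geq 0$ fail at a would-be maximum, unless $w_{11}$ is already bounded. For the mixed and purely normal subcases one follows Lions--Trudinger--Urbas \cite{LTU} and its parabolic extension \cite{Sch1}: the missing component $w_{\mu\mu}$ is recovered from the already-controlled tangential components of $w_{ij}$ together with the pinched determinant \eqref{w-ij}.

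\textbf{Main obstacle.} The technical heart of the argument is the boundary estimate when $\xi_0$ has a non-trivial normal component, since the Neumann condition only directly controls $\varphi_\mu$ and its \emph{tangential} derivatives; reconstructing bounds on $w_{\mu\mu}$ requires carefully combining the equation, the tangential second-order bounds, and convexity of the cone. A further subtlety, peculiar to the present setting, is the non-scale-invariance of the flow: $Q$ depends explicitly on $\varphi$ through the factor $e^{(\alpha-1)\varphi}$, which is precisely why the term $\Phi(\varphi)$ must be included in $W$ and why one must verify that the sign $\alpha-1<0$ is compatible with both the interior and boundary computations; this is where the hypothesis $0<\alpha<1$ is genuinely needed rather than merely convenient.
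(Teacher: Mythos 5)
Your overall blueprint (log-maximum auxiliary function on the unit sphere bundle, split into interior and boundary cases, LTU machinery for the boundary, use of the sign $\alpha-1<0$) is the same as the paper's, but there are two genuine gaps in the way you organize the boundary estimate.

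First, your logic for the double normal estimate is backwards and in fact circular. You propose to bound $w_{\mu\mu}$ ``from the already-controlled tangential components of $w_{ij}$ together with the pinched determinant \eqref{w-ij}.'' But the tangential $C^2$ estimate at the boundary cannot be obtained independently of $\varphi_{\mu\mu}$: when you differentiate the Neumann condition $\mu^i\varphi_i=0$ twice along a tangential direction $\xi$ on $\partial\Omega$ represented locally as a graph $x^n=f(\widehat x)$, the term $\mu^i\varphi_{in}f_{\xi\xi}$ appears, and controlling it requires a bound on the double normal derivative $\varphi_{\mu\mu}$. The paper (following LTU) therefore establishes the double normal bound \emph{first}, by a separate barrier argument using $q(x)=-d(x)+\eta d(x)^2$ and the auxiliary function $P=D\varphi\cdot Dq+Aq$; only then does it treat tangential and oblique directions. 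Determinant pinching alone does not give $w_{\mu\mu}\leq C$ even with tangential bounds, because the Schur complement formula also requires control of the mixed entries $w_{\tau\mu}$, which is not a priori available in your order of argument.

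Second, your auxiliary function $W=\log(w_{ij}\xi^i\xi^j)+\phi(|D\varphi|^2)+\Phi(\varphi)$ omits the boundary-correction tensor. The paper's $W$ uses $\log\bigl([w_{ij}+w'_{ij}]\xi^i\xi^j/(\sigma_{ij}\xi^i\xi^j)+C\bigr)$ with
\begin{equation*}
w'(x,\xi_1,\xi_2,t)=-\mu^i_{\,j}\varphi_i\bigl(\langle\xi_1,\mu\rangle\xi_2'+\langle\xi_2,\mu\rangle\xi_1'\bigr),
\end{equation*}
and this correction is exactly what makes the oblique boundary case work: at a boundary point, differentiating the Neumann condition shows $w'_{\xi_0\xi_0}=2\vartheta\sqrt{1-\vartheta^2}\,w_{\tau\mu}$ cancels the cross term in $w_{\xi_0\xi_0}$, so that $\exp(W-\tfrac{\lambda}{2}|D\varphi|^2)-C$ becomes independent of the direction $\xi$ and the oblique maximum reduces to a tangential comparison. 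Without $w'$ in $W$, you cannot carry out this reduction, and merely invoking \cite{LTU} does not fill the gap since the construction of $w'$ is precisely the technical content being cited. (The extra $\Phi(\varphi)$ you add is not present in the paper and is not needed once the correct auxiliary function is used, because the favourable sign of $(\alpha-1)\dot\varphi\,\varphi_{11}$ is already absorbed in the evolution of $w_{11}$.)
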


We remark that \eqref{w-ij} together with the $C^1$-estimates \eqref{Gra-est}, implies an upper bound on
$\varphi_{ij}$. We hence only need to control $\varphi_{ij}$ from below.
Our proof will be divided into three cases.

\subsection{Interior $C^2$-estimates}

\

The main technique employed here was from M. Sani \cite{San}, and
we simplify the calculus. We consider a continuous function
$$F(\varphi)=\frac{\partial}{\partial t}\varphi-Q(\varphi, D\varphi, D^2\varphi)$$
for any $\varphi \in C^2(\overline{\Omega})$ and let
$\dot{U}=\frac{\partial U}{\partial t}$.
Then, the linearized
operator of $F$ is given by
\begin{equation*}
\begin{aligned}
\mathcal{L}_{1}U:=&\frac{d}{ds}|_{s=0}F(\varphi+sU)\\=&\dot{U}-Q^{ij}U_{ij}-
Q^{k}U_k-(\alpha-1)QU\\=&\dot{U}-\frac{\alpha}{n}\dot{\varphi}w^{ij}U_{ij}
-\frac{2\dot{\varphi}}{n}
\bigg(\frac{\beta}{1+|D\varphi|^2}\sigma^{kl}-\alpha
w^{kl}\bigg)\varphi_l U_k-(\alpha-1)QU,\end{aligned}
\end{equation*}
where
\begin{eqnarray*}
Q^{ij}:=\frac{\partial Q}{\partial \varphi_{ij}}=\frac{\alpha}{n}\dot{\varphi}w^{ij}
\end{eqnarray*}
and
\begin{eqnarray*}
Q^k:=\frac{\partial Q}{\partial \varphi_{k}}=\frac{2\dot{\varphi}}{n}
\bigg(\frac{\beta}{1+|D\varphi|^2}\sigma^{kl}-\alpha
w^{kl}\bigg)\varphi_l,
\end{eqnarray*}
these denotes are also in Section 4. Then, we define the new operator
\begin{equation*}
\begin{aligned}
\mathcal{L}U:=&\dot{U}-Q^{ij}U_{ij}-
Q^{k}U_k\\=&\dot{U}-\frac{\alpha}{n}\dot{\varphi}w^{ij}U_{ij}
-\frac{2\dot{\varphi}}{n}
\bigg(\frac{\beta}{1+|D\varphi|^2}\sigma^{kl}-\alpha
w^{kl}\bigg)\varphi_l U_k.\end{aligned}
\end{equation*}
Here, we recall the notations in Section 4:
\begin{eqnarray*}
w_{ij}=\sigma_{ij}-\varphi_{ij}+\varphi_i \varphi_j,
\end{eqnarray*}
and $w^{ij}$ is the inverse of $w_{ij}$.

First, we prove some equalities on $\mathbb{S}^n$ which will play an important role
in later computations.
\begin{lemma}
The following equalities hold on $\mathbb{S}^n$:
\begin{equation}\label{eq1}
\begin{aligned}
&w^{kl}w_{11;kl}-w^{kl}w_{kl,11}\\
=&-2\mbox{\tr}w^{kl}\varphi_{11}
+2(\mbox{\tr}w^{kl}-n+w^{kl}\varphi_{k}\varphi_{l})\\
&+2w^{kl}(\varphi_{1kl}\varphi_{1}-\varphi_{k11}\varphi_{l}).
\end{aligned}\end{equation}
\begin{equation}\label{eq2}
\begin{aligned}
&w^{kl}(w_{11;k}w_{11;l}-
w_{1k;1}w_{1l;1})\\
=&2w^{kl}w_{11;k}\varphi_lw_{11}-2
w_{11;1}\varphi_1-(w_{11})^2w^{kl}\varphi_k\varphi_l+w_{11}(\varphi_{1})^2,
\end{aligned}
\end{equation}
where the index $1$ in the identities refers to the fixed number 1.
\end{lemma}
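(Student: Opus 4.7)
Both identities are purely tensorial and follow from two ingredients: (a) direct differentiation of the defining formula $w_{ij}=\sigma_{ij}-\varphi_{ij}+\varphi_i\varphi_j$ via the Leibniz rule, and (b) the Ricci commutation formula on $\mathbb{S}^n$, where the Riemann tensor has the explicit form $R_{limj}=\sigma_{lm}\sigma_{ij}-\sigma_{lj}\sigma_{im}$. No analytic input is required; every step is pointwise algebra, and I would fix a point and work in normal coordinates there so that first-order Christoffel corrections drop out.

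For the first identity, I would begin from the Leibniz expansion $w_{ij;k}=-\varphi_{ijk}+\varphi_{ik}\varphi_j+\varphi_i\varphi_{jk}$ and differentiate once more. Specializing in turn to $(i,j)=(1,1)$ and to $(i,j)=(k,l)$ and subtracting, the products purely quadratic in $\nabla^2\varphi$ cancel in pairs by symmetry, leaving (i) the fourth-derivative commutator $w^{kl}(\varphi_{kl11}-\varphi_{11kl})$ and (ii) the third-derivative terms $2w^{kl}(-\varphi_{1kl}\varphi_1-\varphi_{k11}\varphi_l)$, which already match the last line of the target formula. For (i), I would apply the Ricci identity twice (once for the outer pair, once for an inner pair of indices), producing terms of the schematic form $R^{\,\cdot}_{\ \cdot\cdot\cdot}\varphi_{\cdot\cdot}$ and $R^{\,\cdot}_{\ \cdot\cdot\cdot}\varphi_{\cdot}$. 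Substituting the explicit sphere curvature and using $\varphi_{ij}=\sigma_{ij}-w_{ij}+\varphi_i\varphi_j$ to rewrite second derivatives of $\varphi$ in terms of $w_{ij}$, these curvature contributions collapse precisely to $-2(\tr\,w^{kl})\varphi_{11}+2(\tr\,w^{kl}-n+w^{kl}\varphi_k\varphi_l)$, completing \eqref{eq1}.

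For the second identity, I would compute the two first covariant derivatives $w_{11;k}=-\varphi_{11k}+2\varphi_{1k}\varphi_1$ and $w_{1k;1}=-\varphi_{1k1}+\varphi_{11}\varphi_k+\varphi_1\varphi_{k1}$ directly from the definition of $w_{ij}$. The Ricci identity on $\mathbb{S}^n$ relates $\varphi_{1k1}$ to $\varphi_{11k}$ via a linear expression in $\sigma$ and $\varphi_i$. Forming the difference of quadratic expressions $w^{kl}(w_{11;k}w_{11;l}-w_{1k;1}w_{1l;1})$ and multiplying out, the dominant cross terms $w^{kl}\varphi_{11k}\varphi_{11l}$ cancel, and what remains is a sum of products of first derivatives of $w$ and $\varphi$. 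Repeated use of $-\varphi_{ij}=w_{ij}-\sigma_{ij}-\varphi_i\varphi_j$ to reintroduce $w_{11}$ then collapses these remaining terms into the four pieces on the right-hand side of \eqref{eq2}.

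The main obstacle is bookkeeping rather than insight: after differentiating twice one produces a large number of quadratic expressions ($\varphi_{ij}\varphi_{kl}$, $\varphi_i\varphi_{jkl}$, and mixed terms with $\sigma_{ij}$), and one must verify that the curvature contributions from each application of Ricci's identity repackage cleanly. The key tactical move is to substitute $-\varphi_{ij}=w_{ij}-\sigma_{ij}-\varphi_i\varphi_j$ as early as possible, so that the output is already in the mixed $w$--$\varphi$ form demanded by the right-hand sides.
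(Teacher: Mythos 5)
Your plan for \eqref{eq1} matches the paper's: expand both $w_{11;kl}$ and $w_{kl;11}$ from $w_{ij}=\sigma_{ij}-\varphi_{ij}+\varphi_i\varphi_j$, let the purely quadratic $\nabla^2\varphi$ pieces cancel under the symmetric contraction with $w^{kl}$, commute $\varphi_{11kl}$ against $\varphi_{kl11}$ with the Ricci identity, and substitute the explicit sphere curvature. One warning, though: the Leibniz step actually gives
\[
w^{kl}\bigl[(\varphi_1\varphi_1)_{;kl}-(\varphi_k\varphi_l)_{;11}\bigr]
=2w^{kl}\varphi_{1kl}\varphi_1-2w^{kl}\varphi_{k11}\varphi_l ,
\]
with a \emph{plus} sign on the first term, not the minus you quote as ``already matching the last line of the target.'' A specialisation to $n=1$ (where $\mathbb{S}^1$ is flat, so the curvature terms drop and the left side of \eqref{eq1} is identically zero, and one checks the trace terms on the right also cancel to zero) forces the plus: the minus would leave $-4w^{11}\varphi_{111}\varphi_1\neq0$. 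The paper's own intermediate line carries the plus sign before it flips to a minus in the next line, which looks like a slip, and the downstream substitution into the evolution of $w_{11}$ (producing $2(\dot\varphi_1-\tfrac{\alpha}{n}\dot\varphi\,w^{kl}\varphi_{1kl})\varphi_1$) is consistent only with the plus. So the computation you propose is the right one; just do not expect it to reproduce the printed last line verbatim.

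For \eqref{eq2}, your route --- expand both $w_{11;k}$ and $w_{1k;1}$ into third derivatives of $\varphi$ via the definition and the Ricci identity, multiply out the quadratic form, watch the leading $w^{kl}\varphi_{11k}\varphi_{11l}$ cancel, then re-recognise $w_{11}$ in what remains --- is a valid proof, but it is exactly the lengthy computation that the paper's remark attributes to an earlier reference and deliberately replaces. The paper instead rewrites \eqref{w11l} as the compact relation
\[
w_{1k;1}=w_{11;k}-w_{11}\varphi_k+w_{1k}\varphi_1
\]
(its \eqref{11k}), so the target difference becomes an instance of $a^2-(a+b)^2=-2ab-b^2$ with $a=w_{11;k}$ and $b=-w_{11}\varphi_k+w_{1k}\varphi_1$, and the contractions $w^{kl}w_{1l}=\delta^k_1$ and $w^{kl}w_{1k}w_{1l}=w_{11}$ hand over the four right-hand terms of \eqref{eq2} in a few lines, with no third derivatives of $\varphi$ and no reassembly of $w_{11}$ at the end. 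Both routes prove the identity; the paper's is considerably shorter because it never leaves $w$-language.
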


\begin{proof}
Interchanging the covariant derivatives
\begin{eqnarray*}
\varphi_{11k}=\varphi_{1k1}+\sigma_{1k}\varphi_1-\varphi_k=\varphi_{k11}+\sigma_{1k}\varphi_1-\varphi_k
\end{eqnarray*}
in view of \eqref{3C}. Rewriting it as
\begin{eqnarray}\label{w11l}
w_{11;k}=-\varphi_{k11}-\sigma_{1k}\varphi_1+\varphi_k
+2\varphi_{1}\varphi_{1k}.
\end{eqnarray}
Since
the covariant derivatives of the curvature tensor for unit sphere vanish, we have by \eqref{RI}
\begin{equation*}\begin{aligned}
\varphi_{11kl}=&\big(\varphi_{k11}+R^{\ \ \ p}_{k11}\varphi_{p}\big)_{l}\\=&\varphi_{k11l}
+R^{\ \ \ p}_{k11}\varphi_{pl}
\\=&\varphi_{k1l1}+R^{\ \ \ p}_{l11}\varphi_{kp}+
R^{\ \ \ p}_{l1k}\varphi_{p1}+R^{\ \ \ p}_{k11}\varphi_{pl}
\\=&(\varphi_{kl1}+R^{\ \ \ p}_{l1k}\varphi_{p})_1+R^{\ \ \ p}_{l11}\varphi_{kp}+
R^{\ \ \ p}_{l1k}\varphi_{p1}+R^{\ \ \ p}_{k11}\varphi_{pl}
\\=&\varphi_{kl11}+R^{\ \ \ p}_{l11}\varphi_{kp}+
2R^{\ \ \ p}_{l1k}\varphi_{p1}+R^{\ \ \ p}_{k11}\varphi_{pl}.\end{aligned}
\end{equation*}
It follows that
\begin{equation*}\begin{aligned}
w^{kl}w_{11;kl}=&w^{kl}(-\varphi_{11kl}+(\varphi_1\varphi_1)_{kl})
\\=&w^{kl}w_{kl,11}+w^{kl}(-2R^{\ \ \ s}_{l1k}\varphi_{s1}-2R^{\ \ \ s}_{l11}\varphi_{ks}
+2\varphi_{1kl}\varphi_{1}-2\varphi_{k11}\varphi_{l})
\\ =&w^{kl}w_{kl,11}-2\mbox{\tr}w^{kl}\varphi_{11}
+2w^{kl}\varphi_{kl}+2w^{kl}(\varphi_{1kl}\varphi_{1}-\varphi_{k11}\varphi_{l})
\\ =&w^{kl}w_{kl,11}-2\mbox{\tr}w^{kl}\varphi_{11}
+2(\mbox{\tr}w^{kl}-n+w^{kl}\varphi_{k}\varphi_{l})
+2w^{kl}(\varphi_{1kl}\varphi_{1}-\varphi_{k11}\varphi_{l}).\end{aligned}
\end{equation*}
So, the equality \eqref{eq1} is obtained.

Now, we pursue the second equality. We can rewrite \eqref{w11l} as
\begin{eqnarray}\label{11k}
w_{1k;1}=w_{11;k}-w_{11}\varphi_k
+w_{1k}\varphi_1.
\end{eqnarray}
Thus,
\begin{equation*}\begin{aligned}
&w^{kl}(w_{11;k}w_{11;l}-
w_{1k;1}w_{1l;1})\\ =&w^{kl}w_{11;k}w_{11;l}\\ &-w^{kl}
(w_{11;k}-w_{11}\varphi_k
+w_{1k}\varphi_1)(w_{11;l}-w_{11}\varphi_l
+w_{1l}\varphi_1)\\=&-2w^{kl}
w_{11;k}(-w_{11}\varphi_l
+w_{1l}\varphi_1)\\&-w^{kl}
(-w_{11}\varphi_k
+w_{1k}\varphi_1)(-w_{11}\varphi_l
+w_{1l}\varphi_1)\\ =&2w^{kl}w_{11;k}\varphi_lw_{11}-2
w_{11;1}\varphi_1-(w_{11})^2w^{kl}\varphi_k\varphi_l+w_{11}(\varphi_{1})^2.\end{aligned}
\end{equation*}
\end{proof}

\begin{remark}
Although the equality \eqref{eq2} is also obtained in \cite{San}, the great difference between ours and the one in \cite{San} is rewriting \eqref{w11l} as another form \eqref{11k}. This improvement simplifies the calculation in our paper.

\end{remark}

To proceeding our proof, we need the evolution.
\begin{lemma}
Under the flow \eqref{Evo-1}, the following evolution equations hold
true
\begin{equation}\label{2-Ev}
\begin{aligned}
\mathcal{L}(\frac{1}{2}|D\varphi|^2)=&
-\frac{\alpha\dot{\varphi}}{n}\bigg((1+|D\varphi|^2)w^{ij}\sigma_{ij}+
(1+|D\varphi|^2)w^{ij}\varphi_{i}\varphi_j\\
&-\Delta\varphi-|D\varphi|^2-n\bigg)
+(\alpha-1)\dot{\varphi}|D\varphi|^2.\end{aligned}
\end{equation}

\begin{equation}\label{w11}\begin{aligned}
\mathcal{L}w_{11}=&-\frac{(\dot{\varphi}_1)^2}{\dot{\varphi}}+
\frac{\alpha}{n}\dot{\varphi} w^{kl}_{\ \
;1}w_{kl;1}\\
&+\frac{4\beta\dot{\varphi}}{n}
\frac{1}{(1+|D\varphi|^2)^2}(\sigma^{kl}\varphi_{k}\varphi_{l1})^2-\frac{2\beta\dot{\varphi}}{n}
\frac{1}{1+|D\varphi|^2}\sigma^{kl}\varphi_{k1}\varphi_{l1}
\\ &+\frac{2\dot{\varphi}}{n}
\frac{\beta}{1+|D\varphi|^2}\bigg((\varphi_1)^2-|D\varphi|^2\bigg)
+\frac{2\alpha}{n}\dot{\varphi}
(-w_{11}\mbox{tr}w^{ij}+n)\\ &+(\alpha-1)\dot{\varphi}(w_{11}-(\varphi_1)^2-1)
+2(\alpha-1)\dot{\varphi}\varphi_1.\end{aligned}
\end{equation}

\begin{eqnarray}\label{w11-}
\mathcal{L}(\gamma^i \varphi_i)&=&\frac{\alpha}{n} \dot{\varphi} w^{ij}\bigg(\sigma_{il}\varphi_j\gamma^{l}
-\sigma_{ij}\varphi_l \gamma^{l}-2\varphi_{ki}\gamma^{k}_{\ ;j}
-\varphi_{k}\gamma^{k}_{\ ; ij}+2\varphi_j \varphi_k \gamma^{k}_{\ ; i}\bigg)\\ \nonumber&&-\frac{2\beta}{n} \dot{\varphi}
\frac{1}{1+|D \varphi|^2}\varphi^k\varphi_{l} \gamma^{l}_{\ ;k}
+(\alpha-1)\dot{\varphi}\varphi_l\gamma^{l},
\end{eqnarray}
where $\Delta$ is the Laplace of $D$ and $\gamma^i:\overline{\Omega}\rightarrow\mathbb{R}$ is a smooth
function that does not depend on $\varphi$.
\end{lemma}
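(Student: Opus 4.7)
The plan is to prove the three evolution identities by direct computation, exploiting the explicit formula
$Q=e^{(\alpha-1)\varphi}(1+|D\varphi|^2)^{\beta/n}\det(\sigma_{ij})^{\alpha/n}\det(w_{ij})^{-\alpha/n}$
together with $Q^{ij}=(\alpha/n)\dot\varphi\,w^{ij}$, $\partial_\varphi Q=(\alpha-1)Q$, and the Ricci identity on $\mathbb{S}^n$, namely $R^p_{\ ijk}=\delta^p_j\sigma_{ik}-\delta^p_k\sigma_{ij}$. The commutator identities \eqref{eq1} and \eqref{eq2} just proved will carry the heavy lifting for \eqref{w11}.

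For \eqref{2-Ev}, I would set $\psi=\tfrac12|D\varphi|^2$, compute $\dot\psi=\varphi^m D_m Q=\varphi^m Q^{ij}\varphi_{ijm}+\varphi^m Q^k\varphi_{km}+(\alpha-1)Q|D\varphi|^2$ by the chain rule, and expand $Q^{ij}\psi_{ij}+Q^k\psi_k$ directly; the identity $\psi_{ij}=\varphi^m_{\ j}\varphi_{mi}+\varphi^m\varphi_{mij}$ shows that the transport-type terms $\varphi^m Q^{ij}\varphi_{ijm}$ and $\varphi^m Q^k\varphi_{km}$ cancel up to a Hessian-squared remainder and a Ricci-commutator correction coming from $\varphi_{mij}-\varphi_{ijm}$. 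On $\mathbb{S}^n$ this correction contributes exactly $Q^{ij}(\sigma_{ij}|D\varphi|^2-\varphi_i\varphi_j)$. After substituting $Q^{ij}=(\alpha/n)\dot\varphi\,w^{ij}$, expanding the leftover $-Q^{ij}\varphi_{mi}\varphi^m_{\ j}$ through $\varphi_{ij}=\sigma_{ij}+\varphi_i\varphi_j-w_{ij}$, and using $w^{ij}\varphi_{ij}=w^{ij}\sigma_{ij}+w^{ij}\varphi_i\varphi_j-n$ together with $\Delta\varphi=\sigma^{ij}\varphi_{ij}$, everything regroups into \eqref{2-Ev}.

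The proof of \eqref{w11} is the central obstacle. My plan is to differentiate the logarithmic form
\[
\log\dot\varphi=(\alpha-1)\varphi+\tfrac{\beta}{n}\log(1+|D\varphi|^2)+\tfrac{\alpha}{n}\log\det\sigma-\tfrac{\alpha}{n}\log\det w
\]
twice in a frame direction $e_1$, using $(\log\det w)_{;11}=w^{kl}w_{kl;11}+w^{kl}_{\ \ ;1}w_{kl;1}$ with $w^{kl}_{\ \ ;1}=-w^{ka}w^{lb}w_{ab;1}$, to obtain an explicit expression for $\dot\varphi_{11}/\dot\varphi$. Combining with $\dot w_{11}=-\dot\varphi_{11}+2\varphi_1\dot\varphi_1$ and subtracting $Q^{ij}w_{11;ij}+Q^k w_{11;k}$, the $w^{kl}w_{kl;11}$ pieces from the log second derivative and from $-Q^{ij}w_{11;ij}$ (after invoking \eqref{eq1}) cancel exactly; the quadratic third-derivative piece is reorganized through \eqref{eq2} into the $w^{kl}w_{11;k}w_{11;l}$ combination hidden inside $-Q^k w_{11;k}$, producing the $-\dot\varphi_1^2/\dot\varphi$ remainder. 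The curvature residue in \eqref{eq1}, once $\varphi_{11}=1+\varphi_1^2-w_{11}$ is inserted, yields the trace combination $(2\alpha/n)\dot\varphi\,\mathrm{tr}\,w^{ij}(-w_{11}+2\varphi_1^2+n)$; differentiating the factor $(1+|D\varphi|^2)^{\beta/n}$ contributes the $\sigma^{kl}\varphi_k\varphi_{l1}$ and $(1+|D\varphi|^2)^{-1}$ terms on the first and third lines; and differentiating $e^{(\alpha-1)\varphi}$ gives the residual $(1-\alpha)\dot\varphi(w_{11}-\varphi_1^2-1)$. The real challenge is the bookkeeping required to balance these four independent sources of terms against the two commutator identities, which is precisely what \eqref{eq1}--\eqref{eq2} were designed to handle.

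The argument for \eqref{w11-} is routine: since $\gamma^i$ is independent of $\varphi$, Leibniz gives $\partial_t(\gamma^i\varphi_i)=\gamma^i D_i Q=\gamma^i[Q^{kl}\varphi_{kli}+Q^k\varphi_{ki}+(\alpha-1)Q\varphi_i]$, while expanding $Q^{kl}(\gamma^i\varphi_i)_{kl}+Q^k(\gamma^i\varphi_i)_k$ and commuting $\varphi_{kli}-\varphi_{ikl}$ on $\mathbb{S}^n$ recovers the curvature combination $Q^{ij}(\sigma_{il}\varphi_j\gamma^l-\sigma_{ij}\varphi_l\gamma^l)$, the surviving $\gamma$-derivative contribution collects into $-Q^k\varphi_l\gamma^l_{\ ,k}$, and the last piece $(\alpha-1)Q\varphi_l\gamma^l$ comes directly from $\partial_\varphi Q=(\alpha-1)Q$.
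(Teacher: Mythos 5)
Your overall route is the same as the paper's for all three identities: for \eqref{2-Ev} and \eqref{w11-} you compute $\partial_t$ by the chain rule, commute the third derivative back to a derivative of the Hessian-term, and let the Ricci identity on $\mathbb{S}^n$ supply the zeroth-order corrections; for \eqref{w11} you differentiate the evolution equation twice in the $e_1$-direction (writing $\dot\varphi_{11}$ as $\dot\varphi(\log\dot\varphi)_{11}+\dot\varphi_1^2/\dot\varphi$) and then invoke \eqref{eq1}. Two small remarks: in \eqref{2-Ev} the commutator correction appears with the opposite sign to the one you wrote (it is $Q^{ij}(\varphi_i\varphi_j-\sigma_{ij}|D\varphi|^2)$, which then enters $\mathcal{L}$ with the same sign as you have it only after you also account for the minus in front of $Q^{ij}\psi_{ij}$); and the cancellation you describe between the transport terms is exactly what the paper does, so this part is fine.

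The one genuine error in your description is the role you assign to \eqref{eq2}. In the proof of \eqref{w11} the paper uses \emph{only} \eqref{eq1}; the identity \eqref{eq2} first enters later, inside the interior $C^2$ estimate after the first-order maximality conditions $W_i=0$ have been imposed. Indeed, if you look at the claimed formula \eqref{w11}, both the term $\frac{\alpha}{n}\dot\varphi\,w^{kl}_{\ \ ;1}w_{kl;1}$ and the term $-\dot\varphi_1^2/\dot\varphi$ survive unmerged, so no identity has been used to combine them. The remainder $-\dot\varphi_1^2/\dot\varphi$ is produced purely by the chain-rule identity $\dot\varphi_{11}=\dot\varphi(\log\dot\varphi)_{11}+\dot\varphi_1^2/\dot\varphi$ when you substitute $-\dot\varphi_{11}$ into $\dot w_{11}$, not by a rearrangement through $-Q^k w_{11;k}$ (which is linear, not quadratic, in $w_{11;k}$, so cannot ``hide'' a $w^{kl}w_{11;k}w_{11;l}$ term). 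If you actually attempted to invoke \eqref{eq2} at this stage you would not recover \eqref{w11}; you should simply drop that step and let both remainders stand, as the paper does.
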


\begin{proof}
We begin to prove the first evolution equation. Clearly,
\begin{eqnarray*}
\mathcal{L}(\frac{1}{2}|D\varphi|^2)&=&\sigma^{rs}
\dot{\varphi}_r\varphi_s-Q^{ij}(\frac{1}{2}|D\varphi|^2)_{ij}-Q^k
(\frac{1}{2}|D\varphi|^2)_k.
\end{eqnarray*}
Using the evolution equation \eqref{Evo-1},
the first term on the right of the above equation becomes
\begin{eqnarray*}
\sigma^{rs} \dot{\varphi}_r\varphi_s=\sigma^{rs}
(Q^{ij}\varphi_{ijr}+Q^k\varphi_{kr}+(\alpha-1)\dot{\varphi}
\varphi_r)\varphi_s.
\end{eqnarray*}
We have in view of \eqref{3C}
\begin{equation*}\begin{aligned}
\varphi_{ijr}\varphi^r=&\varphi_{irj}\varphi^r+R^{\ \ \ m}_{rji}\varphi_{m}\varphi^r=\varphi_{rij}\varphi^r+
(\delta^{m}_{j}\sigma_{ir}-\delta^{m}_{r}\sigma_{ij})\varphi_{m}\varphi^r\\
=&(\frac{1}{2}|D\varphi|^2)_{ij}-\sigma^{rs}\varphi_{ri}\varphi_{sj}
+\varphi_i\varphi_j-|D\varphi|^2\sigma_{ij}.\end{aligned}
\end{equation*}
Thus, we obtain
\begin{eqnarray*}
\mathcal{L}(\frac{1}{2}|D\varphi|^2)=-
Q^{ij}\varphi_{ri}\varphi_{sj}\sigma^{rs}
+Q^{ij}(\varphi_i\varphi_j-|D\varphi|^2\sigma_{ij})+(\alpha-1)\dot{\varphi}|D\varphi|^2.
\end{eqnarray*}
It follows that
\begin{equation*}\begin{aligned}
\mathcal{L}(\frac{1}{2}|D\varphi|^2)=&
-\frac{\alpha\dot{\varphi}}{n}\bigg((1+|D\varphi|^2)w^{ij}\sigma_{ij}+
(1+|D\varphi|^2)w^{ij}\varphi_{i}\varphi_j\\&-\Delta\varphi-|D\varphi|^2-n\bigg)
+(\alpha-1)\dot{\varphi}|D\varphi|^2\end{aligned}
\end{equation*}
in view of
\begin{eqnarray*}
w^{ij}\varphi_{ri}=w^{ij}(\sigma_{ri}-w_{ri}+\varphi_r\varphi_i)
=w^{ij}\sigma_{ri}-\delta^{j}_{r}+w^{ij}\varphi_r\varphi_i.
\end{eqnarray*}
Now, we prove the second evolution equation. Clearly,
\begin{eqnarray*}
\mathcal{L}(w_{11})=\dot{w}_{11}-Q^{ij}w_{11;ij}-Q^k w_{11;k}.
\end{eqnarray*}
Using the evolution equation \eqref{Evo-1}, we have
\begin{equation*}\begin{aligned}
\dot{w}_{11}=&-\dot{\varphi}_{11}+2\dot{\varphi}_{1}\varphi_1\\=&
-\bigg(-\frac{\alpha\dot{\varphi}}{n}w^{kl}w_{kl;1}
+\frac{\dot{\varphi}}{n}\frac{2\beta}{1+|D\varphi|^2}\sigma^{kl}\varphi_k\varphi_{l1}+(\alpha-1)\dot{\varphi}
\varphi_1\bigg)_1+2\dot{\varphi}_{1}\varphi_1
\\=&-\frac{(\dot{\varphi}_1)^2}{\dot{\varphi}}+
\frac{\alpha\dot{\varphi}}{n} w^{kl}_{\ \
;1}w_{kl;1}+\frac{\alpha\dot{\varphi}}{n}w^{kl}w_{kl;11}+\frac{4\beta\dot{\varphi}}{n}
\frac{1}{(1+|D\varphi|^2)^2}(\sigma^{kl}\varphi_{k}\varphi_{l1})^2
\\&-\frac{2\beta\dot{\varphi}}{n}
\frac{1}{1+|D\varphi|^2}\sigma^{kl}(\varphi_{k1}\varphi_{l1}
+\varphi_{k}\varphi_{l11})
-(\alpha-1)\dot{\varphi}\varphi_{11}+2\dot{\varphi}_1\varphi_1.\end{aligned}
\end{equation*}
Inserting \eqref{eq1} into the above equality, we obtain
\begin{equation}\label{w11-1}\begin{aligned}
\dot{w}_{11}=&-\frac{(\dot{\varphi}_1)^2}{\dot{\varphi}}+
\frac{\alpha}{n}\dot{\varphi} w^{kl}_{\ \
;1}w_{kl;1}+\frac{\alpha}{n}\dot{\varphi}
w^{kl}w_{11;kl}\\ &+\frac{2\alpha}{n}\dot{\varphi}\mbox{tr}w^{ij}\varphi_{11}
-\frac{2\alpha}{n}\dot{\varphi}(\mbox{tr}w^{ij}-n+w^{ij}\varphi_{i}\varphi_{j})\\
&+\frac{4\beta\dot{\varphi}}{n}
\frac{1}{(1+|D\varphi|^2)^2}(\sigma^{kl}\varphi_{k}\varphi_{l1})^2-\frac{2\beta\dot{\varphi}}{n}
\frac{1}{1+|D\varphi|^2}\sigma^{kl}\varphi_{k1}\varphi_{l1}
\\&-\frac{2\dot{\varphi}}{n}(
\frac{\beta}{1+|D\varphi|^2}\sigma^{kl} -\alpha w^{kl})\varphi_l
\varphi_{k11}+2(\dot{\varphi}_1-\frac{\alpha}{n}\dot{\varphi}w^{kl}\varphi_{1kl})\varphi_1
-(\alpha-1)\dot{\varphi}\varphi_{11}.\end{aligned}
\end{equation}
Since
\begin{equation*}\begin{aligned}
-2w^{kl}\varphi_{1kl}\varphi_{1}
=&-2w^{kl}(\varphi_{kl1}-\sigma_{k1}\varphi_l
+\sigma_{kl}\varphi_1)\varphi_{1}\\=&2w^{kl}w_{kl;1}\varphi_1
-2w^{kl}(\varphi_k\varphi_l)_1\varphi_1+2w^{kl}\sigma_{k1}\varphi_l\varphi_1
-2w^{kl}\sigma_{kl}(\varphi_1)^2\\=&2w^{kl}w_{kl;1}\varphi_1
-4w^{kl}\varphi_k\varphi_{l1}\varphi_1+2w^{kl}\sigma_{k1}\varphi_l\varphi_1
-2w^{kl}\sigma_{kl}(\varphi_1)^2,\end{aligned}
\end{equation*}
the second term in the last line of \eqref{w11-1} can be rewritten as
\begin{equation*}\begin{aligned}
&2(\dot{\varphi}_1-\frac{\alpha}{n}\dot{\varphi}w^{kl}\varphi_{1kl})\varphi_1
\\=&2(-\frac{\alpha\dot{\varphi}}{n}w^{kl}w_{kl;1}
+\frac{2\dot{\varphi}}{n}\frac{\beta}{1+|D\varphi|^2}\sigma^{kl}\varphi_k\varphi_{l1}
+(\alpha-1)\dot{\varphi}\varphi_1-\frac{\alpha}{n}\dot{\varphi}w^{kl}\varphi_{1kl})\varphi_1
\\=&\frac{4\dot{\varphi}}{n}\frac{\beta}{1+|D\varphi|^2}\sigma^{kl}
\varphi_k\varphi_{l1}\varphi_1+\frac{2\alpha}{n}\dot{\varphi}w^{kl}
(-2\varphi_k\varphi_{l1}\varphi_1+\sigma_{k1}\varphi_l\varphi_1
-\sigma_{kl}(\varphi_1)^2)\\&+2(\alpha-1)\dot{\varphi}\varphi_1.\end{aligned}
\end{equation*}
And the first term in the last line of \eqref{w11-1} can be rewritten as
\begin{equation*}\begin{aligned}
&-\frac{2\dot{\varphi}}{n}( \frac{\beta}{1+|D\varphi|^2}\sigma^{kl}
-\alpha w^{kl})\varphi_l
\varphi_{k11}\\=&-\frac{2\dot{\varphi}}{n}(
\frac{\beta}{1+|D\varphi|^2}\sigma^{kl} -\alpha w^{kl})\varphi_l
(-w_{11;k}-\sigma_{1k}\varphi_1+\varphi_k+2\varphi_1\varphi_{1k}),\end{aligned}
\end{equation*}
in view of \eqref{w11l}, it follows that
\begin{equation*}\begin{aligned}
&-\frac{2\dot{\varphi}}{n}( \frac{\beta}{1+|D\varphi|^2}\sigma^{kl}
-\alpha w^{kl})\varphi_l \varphi_{k11}-Q^k
w_{11;k}\\=&-\frac{2\dot{\varphi}}{n}(
\frac{\beta}{1+|D\varphi|^2}\sigma^{kl} -\alpha w^{kl})\varphi_l
(-\sigma_{1k}\varphi_1+\varphi_k+2\varphi_1\varphi_{1k}).\end{aligned}
\end{equation*}
Therefore,
\begin{equation*}\label{w11-2}\begin{aligned}
\mathcal{L}w_{11}=&-\frac{(\dot{\varphi}_1)^2}{\dot{\varphi}}+
\frac{\alpha}{n}\dot{\varphi} w^{kl}_{\ \
;1}w_{kl;1}+\frac{2\alpha}{n}\dot{\varphi}\mbox{tr}w^{ij}\varphi_{11}
-\frac{2\alpha}{n}\dot{\varphi}(\mbox{tr}w^{ij}-n+w^{ij}\varphi_{i}\varphi_{j})\\
&+\frac{4\beta\dot{\varphi}}{n}
\frac{1}{(1+|D\varphi|^2)^2}(\sigma^{kl}\varphi_{k}\varphi_{l1})^2-\frac{2\beta\dot{\varphi}}{n}
\frac{1}{1+|D\varphi|^2}\sigma^{kl}\varphi_{k1}\varphi_{l1}
\\ &+\frac{2\dot{\varphi}}{n}
\frac{\beta}{1+|D\varphi|^2}\bigg((\varphi_1)^2-|D\varphi|^2\bigg)
+\frac{2\alpha}{n}\dot{\varphi}w^{kl}
\bigg(-\sigma_{kl}(\varphi_1)^2+\varphi_k\varphi_l\bigg)
\\ &-(\alpha-1)\dot{\varphi}\varphi_{11}+2(\alpha-1)\dot{\varphi}\varphi_1,\end{aligned}
\end{equation*}
which implies that
\begin{equation}\label{w11-3}\begin{aligned}
\mathcal{L}w_{11}=&-\frac{(\dot{\varphi}_1)^2}{\dot{\varphi}}
+\frac{\alpha}{n}\dot{\varphi} w^{kl}_{\ \
;1}w_{kl;1}\\
&+\frac{4\beta\dot{\varphi}}{n}
\frac{1}{(1+|D\varphi|^2)^2}(\sigma^{kl}\varphi_{k}\varphi_{l1})^2-\frac{2\beta\dot{\varphi}}{n}
\frac{1}{1+|D\varphi|^2}\sigma^{kl}\varphi_{k1}\varphi_{l1}
\\ &+\frac{2\dot{\varphi}}{n}
\frac{\beta}{1+|D\varphi|^2}\bigg((\varphi_1)^2-|D\varphi|^2\bigg)
\\ &+\frac{2\alpha}{n}\dot{\varphi}
(-w_{11}\mbox{tr}w^{ij}+n)+(\alpha-1)\dot{\varphi}(w_{11}-(\varphi_1)^2-1)+2(\alpha-1)\dot{\varphi}\varphi_1.\end{aligned}
\end{equation}
Now, we only leave the third equality to prove.
Differentiating the function $\gamma^{k}\varphi_{k}$ twice with $x$, we have
\begin{eqnarray*}
(\varphi_{k}\gamma^{k})_{i}=\varphi_{ki}\gamma^{k}+\varphi_{k}\gamma^{k}_{\ ;i}
\end{eqnarray*}
and
\begin{eqnarray*}
(\varphi_{k}\gamma^{k})_{ij}=\varphi_{kij}\gamma^{k}+\varphi_{ki}\gamma^{k}_{\ ;j}+\varphi_{kj}\gamma^{k}_{\ ;i}
+\varphi_{k}\gamma^{k}_{\ ;ij}.
\end{eqnarray*}
Differentiating $\varphi_{k}\gamma^{k}$  with $t$, we have
\begin{equation*}\begin{aligned}
(\varphi_{k}\gamma^{k})_{t}=&\varphi_{kt}\cdot\gamma^k\\=&\bigg(Q^{ij}\varphi_{ijl}+Q^{k}\varphi_{kl}
+(\alpha-1)Q\varphi_l\bigg)\gamma^{l}\\
=&\bigg(Q^{ij}\varphi_{lij}+Q^{ij}\varphi_{j}\sigma_{il}
-Q^{ij}\varphi_{l}\sigma_{ij}+Q^{k}\varphi_{kl}+(\alpha-1)Q\varphi_l\bigg)\gamma^{l}.\end{aligned}
\end{equation*}
Therefore,
\begin{equation*}\begin{aligned}
\mathcal{L}(\varphi_{k}\gamma^{k})=&(\varphi_{k}\gamma^{k})_t-Q^{ij}
(\varphi_{k}\gamma^{k})_{ij}-Q^k(\varphi_{i}\gamma^{i})_{k}\\=&Q^{ij}\bigg(\sigma_{il}\varphi_j\gamma^{l}
-\sigma_{ij}\varphi_l \gamma^{l}-2\varphi_{ki}\gamma^{k}_{\ ;j}-\varphi_k \gamma^{k}_{\ ;ij}\bigg)-Q^{k}
\varphi_{l} \gamma^{l}_{\ ;k}\\&
+(\alpha-1)Q\varphi_l\gamma^{l}.\end{aligned}
\end{equation*}
Then, we obtain our result by inserting $Q^{ij}$ and $Q^k$ into the above equality.
\end{proof}

Let $\mu$ be a smooth extension of the outward unit normal to $\partial\Omega$ that vanishes outside
a tubular neighborhood of $\partial\Omega$. We define for
$(x, \xi_{1}, \xi_{2}, t)\in \overline{\Omega}\times \mathbb{R}^{n}\times \mathbb{R}^{n}\times [0, T]$
\begin{eqnarray*}
w^{\prime}(x, \xi_1, \xi_2, t)
=-\mu^{i}_{\ ;j}\varphi_{i}(\langle\xi_1, \mu\rangle\xi^{\prime j}_2+\langle\xi_2, \mu\rangle\xi^{\prime j}_1),
\end{eqnarray*}
where $$\xi^{\prime}_i=\xi_i-\langle\xi_i, \mu\rangle\mu$$
indicate the tangential component of the vector $\xi_i$, with $i=1,2$ and where $\langle,\rangle$
is the inner product induced by $\sigma$. Moreover, let
$w^{\prime}_{ij}(x, t): \overline{\Omega}\times [0, T]\rightarrow \mathbb{R}^n$, with $1\leq i,j\leq n$,
represent the component functions
\begin{eqnarray*}
w^{\prime}_{ij}(x, t)=-\mu^{q}_{\ ; p}\varphi_q[\sigma_{ki}
\mu^k(\delta^{p}_{j}-\sigma_{lj}\mu^l\mu^p)+\sigma_{kj}
\mu^k(\delta^{p}_{i}-\sigma_{li}\mu^l\mu^p)],
\end{eqnarray*}
of the symmetric 2-tensor filed $w^{\prime}$.

\begin{remark}
$w^{\prime}(x, \xi_1, \xi_2, t)$ is not an important part in the following interior estimate,
but will paly a great role in the later non-tangential and non-normal boundary estimate.
\end{remark}

We define for $(x, \xi, t)\in \overline{\Omega}\times \mathbb{R}^{n}\times [0, T]$
as that done in \cite{San}
\begin{eqnarray*}
W(x, \xi, t)=\log \bigg(\frac{[w_{ij}(x,t)+w^{\prime}_{ij}(x,t)]
\xi^i \xi^j}{\sigma_{ij}\xi^i \xi^j}+C\bigg)+\frac{1}{2}\lambda |D\varphi|^2,
\end{eqnarray*}
where $C$ and $\lambda$ are constants which will be chosen later.

\begin{proposition}\label{interior}
Let $\varphi$ be a solution of the flow \eqref{Evo-1}, assume $W$
attains its maximum in $\Omega\times \mathbb{S}^{n-1}\times [0, T]$ for some fixed $T<T^{*}$.
Then, there exists $C=C(n, M_0)$
such that
\begin{equation*}
C(n, M_0)\leq \varphi_{ij}\xi^i\xi^j, \qquad \ \ \forall (x, \xi,
t)\in \overline{\Omega}\times \mathbb{S}^{n-1}\times [0, T].
\end{equation*}
\end{proposition}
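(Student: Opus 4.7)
The plan is to apply the parabolic maximum principle to $W$ at its assumed interior maximum and combine the evolution identities \eqref{w11} and \eqref{2-Ev} to force an upper bound on $w_{11}$; via $\varphi_{ij}=\sigma_{ij}+\varphi_i\varphi_j-w_{ij}$ and the $C^1$ bound \eqref{Gra-est} this gives the desired lower bound on $\varphi_{ij}\xi^i\xi^j$.

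Suppose the maximum is attained at $(x_0,\xi_0,t_0)$ with $x_0\in\Omega$ and $t_0>0$ (the case $t_0=0$ is handled by the initial data). Since the extension $\mu$ vanishes outside a tubular neighborhood of $\partial\Omega$, a genuinely interior $x_0$ lies where $w'_{ij}\equiv 0$. I pick normal coordinates at $x_0$ with $\sigma_{ij}(x_0)=\delta_{ij}$, rotate to diagonalize $w_{ij}(x_0)$, and align $\xi_0=e_1$ with the largest eigenvalue $w_{11}(x_0)$, so that locally $W=\log(w_{11}+C)+\frac{\lambda}{2}|D\varphi|^2$. Since $Q^{ij}=\frac{\alpha}{n}\dot\varphi\,w^{ij}$ is positive definite, the first- and second-order conditions at the max yield $DW(x_0,t_0)=0$ and $\mathcal{L}W(x_0,t_0)\ge 0$. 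Applying $\mathcal{L}\log u=\mathcal{L}u/u+Q^{ij}u_iu_j/u^2$ and inserting \eqref{w11} and \eqref{2-Ev},
\begin{equation*}
0\le\frac{\mathcal{L}w_{11}}{w_{11}+C}+\frac{Q^{ij}w_{11;i}w_{11;j}}{(w_{11}+C)^2}+\frac{\lambda}{2}\mathcal{L}|D\varphi|^2.
\end{equation*}

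Three cancellations carry the argument. First, \emph{gradient absorption}: the positive quadratic $Q^{ij}w_{11;i}w_{11;j}/(w_{11}+C)^2$ is dominated by the nonpositive concavity term $\frac{\alpha}{n}\dot\varphi\,w^{kl}_{\ ;1}w_{kl;1}/(w_{11}+C)$ in $\mathcal{L}w_{11}$ after using \eqref{11k} to replace $w_{1k;1}$ by $w_{11;k}$ up to bounded error---exactly the simplification recorded in \eqref{eq2}. Second, \emph{curvature cancellation}: the bad term $\frac{4\beta\dot\varphi}{n(1+|D\varphi|^2)^2}(\sigma^{kl}\varphi_k\varphi_{l1})^2$ pairs with the good $-\frac{2\beta\dot\varphi}{n(1+|D\varphi|^2)}\sigma^{kl}\varphi_{k1}\varphi_{l1}$ via Cauchy--Schwarz, leaving a residue of order $O(w_{11})$. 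Third, \emph{absorption by $\lambda$}: in diagonal coordinates $\Delta\varphi=n+|D\varphi|^2-\mathrm{tr}(w_{ii})\le n+|D\varphi|^2-w_{11}$, so the term $\frac{\lambda\alpha\dot\varphi}{n}\Delta\varphi$ contained in $\frac{\lambda}{2}\mathcal{L}|D\varphi|^2$ produces a negative $-\frac{\lambda\alpha\dot\varphi}{2n}w_{11}$; for $\lambda$ chosen large in terms of $n$ and $M_0$ (via the $C^0$, $\dot\varphi$, $C^1$, and $\det w$ bounds of Sections 3--5) this strictly dominates the $(1-\alpha)\dot\varphi w_{11}$ term in $\mathcal{L}w_{11}$ and the residue from the curvature step. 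Hence $\mathcal{L}W(x_0,t_0)\ge 0$ forces $w_{11}(x_0,t_0)\le C(n,M_0)$, and by the maximality of $W$ the ratio $w_{ij}\xi^i\xi^j/\sigma_{ij}\xi^i\xi^j$ is bounded above by $C(n,M_0)$ for every $(x,\xi,t)$, which yields the lower bound on $\varphi_{ij}\xi^i\xi^j$.

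The main obstacle is the curvature step: a crude Cauchy--Schwarz $(\sigma^{kl}\varphi_k\varphi_{l1})^2\le |D\varphi|^2\sum_l(\varphi_{l1})^2$ does not give a negative sum when $|D\varphi|^2>1$. The remedy is to split $\sum_l(\varphi_{l1})^2$ into the diagonal piece $(\varphi_{11})^2\approx w_{11}^2$ and the off-diagonal piece (which is bounded in terms of $C^1$ data, since $w_{1k}=0$ for $k\ne 1$ in diagonal coordinates forces $\varphi_{1k}=\sigma_{1k}+\varphi_1\varphi_k$ to be bounded), and then to combine the diagonal piece with the concavity residue from the gradient step. After these cancellations only bounded or $O(w_{11})$ contributions survive, which $-\frac{\lambda\alpha\dot\varphi}{2n}w_{11}$ absorbs for $\lambda$ sufficiently large, completing the interior $C^2$ estimate.
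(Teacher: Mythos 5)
Your plan follows the paper's own argument closely: test function $W=\log(w_{11}+w'_{11}+C)+\tfrac{\lambda}{2}|D\varphi|^2$, the evolution identities \eqref{w11}, \eqref{2-Ev}, \eqref{eq2}, Cauchy--Schwarz on the $\beta$-terms, and absorption by $-\lambda w_{11}$ coming from the trace term in $\lambda\mathcal{L}(\tfrac12|D\varphi|^2)$. However, there is a genuine gap at the very start: the claim that ``a genuinely interior $x_0$ lies where $w'_{ij}\equiv 0$'' is false. The extension $\mu$ of the outer normal is supported on a tubular neighborhood of $\partial\Omega$, which has nonempty intersection with $\Omega$; an interior maximum point $x_0$ can perfectly well lie in $\Omega_\delta$ where $\mu\neq 0$, so $w'_{ij}(x_0)$ and its derivatives need not vanish. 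The paper therefore keeps $w'_{11}$ throughout the interior estimate (its Remark only says $w'$ plays a minor role, not that it disappears): it computes $\mathcal{L}w'_{11}$ via \eqref{w11-}, bounds it by $c\dot\varphi(\operatorname{tr}w^{ij}+1)+\tfrac{2\alpha\dot\varphi}{n}w^{ij}\varphi_iw'_{11;j}$, and then uses the first-order condition $DW=0$ to absorb the residual $w'_{11;j}$-terms together with the $w_{11;k}$-terms (see \eqref{11-1} and \eqref{last-2}). Your sketch mentions $DW=0$ but never actually deploys it to kill these gradient terms, and by discarding $w'$ you also lose the need to control $\mathcal{L}w'_{11}$; both omissions have to be repaired before the absorption step is legitimate.

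Two smaller points. First, your ``remedy'' in the last paragraph (splitting $\sum_l\varphi_{l1}^2$ into diagonal and off-diagonal parts using $w_{1k}=0$ for $k\neq 1$) relies on diagonalizing $w_{ij}$ itself; the paper diagonalizes $w_{ij}+w'_{ij}$, so $w_{1k}=-w'_{1k}$ is only bounded, not zero --- the estimate \eqref{ij} is the correct statement. Second, the coefficient $-\tfrac{\lambda\alpha\dot\varphi}{2n}w_{11}$ should be $-\tfrac{\lambda\alpha\dot\varphi}{n}w_{11}$ (the factor $\tfrac12$ is already accounted for in $\lambda\mathcal{L}(\tfrac12|D\varphi|^2)$), a harmless slip but worth fixing.
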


\begin{proof}
Assume $W(x, \xi, t)$ achieves its maximum at
$(x_0, \xi_0, t_0)\in \Omega\times \mathbb{S}^{n-1}\times [0, T]$.
Choose Riemannian normal coordinates at $x_0$ such that at
this point we have
\begin{eqnarray*}
\sigma_{ij}(x_0)=\delta_{ij}, \ \ \ \partial_{k}\sigma_{ij}(x_0)=0.
\end{eqnarray*}
And we further rotate the coordinate system at $(x_0, t_0)$
such that the matrix $w_{ij}+w^{\prime}_{ij}$ is diagonal, i.e.
\begin{eqnarray*}
w_{ij}+w^{\prime}_{ij}=(w_{ii}+w^{\prime}_{ii})\delta_{ij}
\end{eqnarray*}
with
\begin{eqnarray*}
w_{nn}+w^{\prime}_{nn}\leq\cdot\cdot\cdot\leq w_{22}+w^{\prime}_{22}
\leq w_{11}+w^{\prime}_{11}.
\end{eqnarray*}
Thus, since the matrix $w_{ij}$ is positive definite, we have at $(x_0, t_0)$
\begin{eqnarray}\label{ij}
|w_{ii}|\leq w_{11}+c \ \ \mbox{and} \ \ |w_{ij}|\leq c \ \
\mbox{for} \ \ i\neq j
\end{eqnarray}
in view of the $C^1$-estimate \eqref{Gra-est}.
Set $\xi_1(x)=(1,0,...,0)$ around a neighbor of $x_0$. Clearly, $\xi_1(x_0)=\xi_0$ and  there
holds at $(x_0, t_0)$
\begin{eqnarray*}
w_{11}+w^{\prime}_{11}=\sup_{\xi\in \mathbb{S}^n}\frac{[w_{ij}(x,t)+w^{\prime}_{ij}(x,t)]
\xi^i \xi^j}{\sigma_{ij}\xi^i \xi^j}
\end{eqnarray*}
and in a neighborhood of $(x_0, t_0)$
\begin{eqnarray*}
w_{11}+w^{\prime}_{11}\leq\sup_{\xi\in \mathbb{S}^n}\frac{[w_{ij}(x,t)+w^{\prime}_{ij}(x,t)]
\xi^i \xi^j}{\sigma_{ij}\xi^i \xi^j}.
\end{eqnarray*}
Furthermore, it is easy to check that the covariant(at least up to the second order) and the first time derivatives of
\begin{eqnarray*}
\frac{[w_{ij}(x,t)+w^{\prime}_{ij}(x,t)]
\xi_{1}^i \xi_{1}^j}{\sigma_{ij}\xi_{1}^i \xi_{1}^j}
\end{eqnarray*}
and
\begin{eqnarray*}
w_{11}+w^{\prime}_{11}
\end{eqnarray*}
do coincide at $(x_0, t_0)$(in normal coordinate).
Without loss of generality, we treat $w_{11}+w^{\prime}_{11}$ like a
scalar and pretend that $W$ is defined by
\begin{eqnarray*}
W(x, t)=\log(w_{11}+w^{\prime}_{11}+C)+\frac{1}{2}\lambda|D\varphi|^2,
\end{eqnarray*}
which achieves its maximum at $(x_0, t_0)\in \Omega\times[0, T]$.
Here, noticing that we can choose $C$ large enough satisfying
\begin{eqnarray}\label{C0}
0 \leq w^{\prime}_{11}+C,
\end{eqnarray}
since $w^{\prime}_{11}$ is bounded by the $C^1$-estimate
\eqref{Gra-est}.

In the following, we want to compute
\begin{equation*}\begin{aligned}
\mathcal{L}W=&\dot{W}-Q^{ij}W_{ij}- Q^{k}W_k\\=&\mathcal{L}(\log(w_{11}+w^{\prime}_{11}+C))
+\frac{1}{2}\lambda\mathcal{L}(|D\varphi|^2).\end{aligned}
\end{equation*}
First, after a simple calculation, we can rewrite the first term in
the following form
\begin{equation*}\begin{aligned}
&\mathcal{L}(\log(w_{11}+w^{\prime}_{11}+C)\\=&
\frac{\mathcal{L}w_{11}}{w_{11}+w^{\prime}_{11}+C}
+\frac{\mathcal{L}w^{\prime}_{11}}{w_{11}+w^{\prime}_{11}+C}
+\frac{\alpha}{n}\dot{\varphi}w^{ij}
\frac{(w_{11;i}+w^{\prime}_{11;i})(w_{11;j}+w^{\prime}_{11;j})}
{(w_{11}+w^{\prime}_{11}+C)^2}.\end{aligned}
\end{equation*}
Now, we begin to estimate $\mathcal{L}w_{11}$ through the evolution
\eqref{w11}. Using the Cauchy-Schwarz inequality, the second line of
\eqref{w11} takes the form
\begin{equation}\label{2-line}\begin{aligned}
&\frac{2\beta\dot{\varphi}}{n}
\frac{1}{1+|D\varphi|^2}\bigg(\frac{2}{1+|D\varphi|^2}(\sigma^{kl}\varphi_{k}\varphi_{l1})^2
-\sigma^{kl}\varphi_{k1}\varphi_{l1}\bigg)\\ \leq&\frac{2\beta\dot{\varphi}}{n}
\frac{1}{1+|D\varphi|^2}\frac{|D\varphi|^2-1}{|D\varphi|^2+1}
\sigma^{kl}\varphi_{k1}\varphi_{l1}\\ \leq&\frac{2\beta\dot{\varphi}}{n}
\frac{1}{1+|D\varphi|^2}
\sigma^{kl}\varphi_{k1}\varphi_{l1}.\end{aligned}
\end{equation}
On the other hand,
\begin{eqnarray*}
\sigma^{kl}\varphi_{k1}\varphi_{l1}=\sigma^{kl}w_{k1}w_{l1}+\sigma_{11}-2w_{11}
+2(\varphi_1)^2-2\varphi_1\sigma^{kl}w_{k1}\varphi_l+(\varphi_1)^2|D\varphi|^2.
\end{eqnarray*}
Using \eqref{ij}, together with the $C^1$-estimate \eqref{Gra-est},
\eqref{2-line} becomes
\begin{equation}\label{2-line-1}\begin{aligned}
&\frac{2\beta\dot{\varphi}}{n}
\frac{1}{1+|D\varphi|^2}\bigg(\frac{2}{1+|D\varphi|^2}(\sigma^{kl}\varphi_{k}\varphi_{l1})^2
-\sigma^{kl}\varphi_{k1}\varphi_{l1}\bigg)\\ \leq&\frac{2\beta\dot{\varphi}}{n}
\frac{1}{1+|D\varphi|^2}
(\sigma^{kl}w_{k1}w_{l1}+cw_{11}+c).\end{aligned}
\end{equation}
Inserting \eqref{2-line-1} into $\mathcal{L}w_{11}$,
abandoning the non-positive terms and using the $C^1$-estimate
\eqref{Gra-est} again, we obtain
\begin{eqnarray}\label{2-line-2}
\mathcal{L}w_{11}\leq\frac{2\beta\dot{\varphi}}{n}
\frac{1}{1+|D\varphi|^2}
\sigma^{kl}w_{k1}w_{l1}
+c\ \dot{\varphi}
(\mbox{tr}w^{ij}+w_{11}+1)+
\frac{\alpha}{n}\dot{\varphi} w^{kl}_{\ \
;1}w_{kl;1}.
\end{eqnarray}
Next, recalling  \eqref{2-Ev} and using the $C^1$-estimate
\eqref{Gra-est},
\begin{eqnarray}\label{2-Ev-1}
&&\lambda\mathcal{L}(\frac{1}{2}|D\varphi|^2)\\ \nonumber&=&
-\frac{\alpha\lambda\dot{\varphi}}{n}\bigg((1+|D\varphi|^2)w^{ij}\sigma_{ij}+
(1+|D\varphi|^2)w^{ij}\varphi_{i}\varphi_j+w_{ij}\sigma^{ij}
-2|D\varphi|^2-2n\bigg)\\ \nonumber
&&+(\alpha-1)\lambda\dot{\varphi}|D\varphi|^2\\&\leq&
-\frac{\alpha\lambda\dot{\varphi}}{n}\bigg(w_{ij}\sigma^{ij}+
(1+|D\varphi|^2)w^{ij}\varphi_{i}\varphi_j-2|D\varphi|^2\bigg)\nonumber\\ \nonumber
&&-\frac{\alpha\lambda\dot{\varphi}}{n}\mbox{tr}w^{ij}+c\lambda\dot{\varphi}.
\end{eqnarray}
Then, it follows in view of \eqref{2-line-2} and \eqref{2-Ev-1}
\begin{equation*}\begin{aligned}
\mathcal{L}W\leq&
\frac{1}{w_{11}+w^{\prime}_{11}+C}\bigg(\frac{2\beta\dot{\varphi}}{n}
\frac{1}{1+|D\varphi|^2}
\sigma^{kl}w_{k1}w_{l1}
+c \ \dot{\varphi}
(\mbox{tr}w^{ij}+w_{11}+1)+
\frac{\alpha}{n}\dot{\varphi} w^{kl}_{\ \
;1}w_{kl;1}\bigg)
\\&+\frac{\mathcal{L}w^{\prime}_{11}}{w_{11}+w^{\prime}_{11}+C}
+\frac{\alpha}{n}\dot{\varphi}w^{ij}
\frac{(w_{11;i}+w^{\prime}_{11;i})(w_{11;j}+w^{\prime}_{11;j})}
{(w_{11}+w^{\prime}_{11})^2}\\&-\frac{\alpha\lambda\dot{\varphi}}{n}\bigg(w_{ij}\sigma^{ij}+
(1+|D\varphi|^2)w^{ij}\varphi_{i}\varphi_j-|D\varphi|^2\bigg)\\
&-\frac{\alpha\lambda\dot{\varphi}}{n}\mbox{tr}w^{ij}+c\lambda\dot{\varphi}.\end{aligned}
\end{equation*}
To make progress, we need to estimate
\begin{equation*}\begin{aligned}
&\frac{1}{w_{11}+w^{\prime}_{11}+C}\frac{2\beta\dot{\varphi}}{n}
\frac{1}{1+|D\varphi|^2}
\sigma^{kl}w_{k1}w_{l1}
-\frac{\alpha\lambda\dot{\varphi}}{n}w_{ij}\sigma^{ij}
\\ \leq &c\ \dot{\varphi}(\frac{(w_{11}+c)^2}{w_{11}+w^{\prime}_{11}+C}-\lambda w_{11})\\ \leq &
c(1-\lambda)\dot{\varphi}w_{11}\end{aligned}
\end{equation*}
in view of \eqref{ij} and \eqref{C0}, where we assume that $w_{11}\geq 1$, otherwise $w_{11}$ is upper bounded
and our theorem holds true.
Now, we only leave the term
$\mathcal{L}w^{\prime}_{11}$ to estimate. Clearly, $w^{\prime}_{11}$
can be rewritten as
\begin{eqnarray*}
w^{\prime}_{11}=\gamma^i\varphi_i+C
\end{eqnarray*}
with $\gamma^i:\overline{\Omega}\rightarrow\mathbb{R}$
that does not depend on $\varphi$. Recalling \eqref{w11-}, we have
\begin{eqnarray*}
\mathcal{L}(\gamma^i \varphi_i)&=&\frac{\alpha}{n} \dot{\varphi} w^{ij}\bigg(\sigma_{il}\varphi_j\gamma^{l}
-\sigma_{ij}\varphi_l \gamma^{l}-2\varphi_{ki}\gamma^{k}_{\ ;j}
-\varphi_{k}\gamma^{k}_{\ ; ij}+2\varphi_j \varphi_k \gamma^{k}_{\ ; i}\bigg)\\ \nonumber&&-\frac{2\beta}{n} \dot{\varphi}
\frac{1}{1+|D \varphi|^2}\varphi^k\varphi_{l} \gamma^{l}_{\ ;k}
+(\alpha-1)\dot{\varphi}\varphi_l\gamma^{l},
\end{eqnarray*}
In view of
\begin{eqnarray*}
w^{ij}\varphi_{lj}=w^{ij}\sigma_{lj}-\delta^{i}_{l}+w^{ij}\varphi_{j}\varphi_{l},
\end{eqnarray*}
we obtain by the $C^1$-estimate \eqref{Gra-est}
\begin{eqnarray*}
\mathcal{L}w^{\prime}_{11}&\leq&c \ \dot{\varphi}(\ \mbox{tr}w^{ij}+1).
\end{eqnarray*}
Thus,
\begin{equation*}\begin{aligned}
\mathcal{L}W\leq&
\frac{\dot{\varphi}}{w_{11}+w^{\prime}_{11}+C}\bigg(
c \ \mbox{tr}w^{ij}+cw_{11}+c+
\frac{\alpha}{n}\dot{\varphi} w^{kl}_{\ \
;1}w_{kl;1}\bigg)
\\&+\frac{\dot{\varphi}}{w_{11}+w^{\prime}_{11}+C}\bigg(
c \ \mbox{tr}w^{ij}+c\bigg)
+\frac{\alpha}{n}\dot{\varphi}w^{ij}
\frac{(w_{11;i}+w^{\prime}_{11;i})(w_{11;j}+w^{\prime}_{11;j})}
{(w_{11}+w^{\prime}_{11}+C)^2}\\&-\frac{\alpha\lambda\dot{\varphi}}{n}\bigg(
(1+|D\varphi|^2)w^{ij}\varphi_{i}\varphi_j-|D\varphi|^2\bigg)\\
&-\frac{\alpha\lambda\dot{\varphi}}{n}\mbox{tr}w^{ij}+c\lambda\dot{\varphi}
+c(1-\lambda)\dot{\varphi}w_{11}.\end{aligned}
\end{equation*}
The last term which we have to estimate is
\begin{eqnarray*}
\frac{\alpha}{n}\dot{\varphi}\bigg(\frac{1}{w_{11}+w^{\prime}_{11}+C}
w^{kl}_{\ \
;1}w_{kl;1}+w^{ij}
\frac{(w_{11;i}+w^{\prime}_{11;i})(w_{11;j}+w^{\prime}_{11;j})}
{(w_{11}+w^{\prime}_{11}+C)^2}\bigg).
\end{eqnarray*}
For convenience later, we set $V=w_{11}+w^{\prime}_{11}+C$. Then,
\begin{equation*}\begin{aligned}
&\frac{1}{V}w^{kl}_{\ \
;1}w_{kl;1}+w^{ij}\frac{V_{i}V_{j}}
{V^2}\\=&-\frac{1}{V}w^{pk}w^{ql}w_{pq;1}w_{kl;1}+w^{ij}\frac{V_{i}V_{j}}{V^2}
\\ \leq&-\frac{1}{V}\frac{1}{w_{11}}w^{kl}w_{1k;1}w_{1l;1}+w^{ij}\frac{V_{i}V_{j}}{V^2}
\\=&w^{ij}\frac{V_{i}V_{j}}{Vw_{11}}-\frac{1}{V}\frac{1}{w_{11}}w^{kl}w_{1k;1}w_{1l;1}
-\frac{w^{\prime}_{11}+C}{V^2w_{11}}w^{ij}V_{i}V_{j}.\end{aligned}
\end{equation*}
In view of \eqref{C0},
together with the fact that the matric $w^{ij}$ is positive definite, we can say
\begin{eqnarray*}
-\frac{w^{\prime}_{11}+C}{V^2w_{11}}w^{ij}V_{i}V_{j}\leq 0.
\end{eqnarray*}
Thus,
\begin{eqnarray*}
\frac{1}{V}w^{kl}_{\ \
;1}w_{kl;1}+w^{ij}\frac{V{i}V_{j}}
{V^2}\leq \frac{1}{Vw_{11}}(w^{ij}V_iV_j-w^{kl}w_{1k;1}w_{1l;1}).
\end{eqnarray*}
Recalling that
\begin{eqnarray*}
w^{kl}V_kV_l=w^{kl}(w_{11;k}w_{11;l}+2w_{11;k}w^{\prime}_{11;l}
+w^{\prime}_{11;k}w^{\prime}_{11;l}).
\end{eqnarray*}
It follows from the equality \eqref{eq2}
\begin{equation}\label{last-1}\begin{aligned}
&\frac{1}{V}w^{kl}_{\ \
;1}w_{kl;1}+w^{ij}\frac{V_{i}V_{j}}
{V^2}\\ \leq &\frac{1}{Vw_{11}}(2w^{kl}w_{11;k}\varphi_lw_{11}-2
w_{11;1}\varphi_1-(w_{11})^2w^{kl}\varphi_k\varphi_l+w_{11}(\varphi_{1})^2
\\ &+2w^{kl}w_{11;k}w^{\prime}_{11;l}
+w^{kl}w^{\prime}_{11;k}w^{\prime}_{11;l}).\end{aligned}
\end{equation}
Since $W(x, t)$ achieves its maximum at
$(x_0, t_0)\in \Omega\times [0, T]$, so $W_i=0$ implies
\begin{eqnarray*}
W_i=\frac{V_i}{V}+\lambda\sigma^{kl}\varphi_{ki}\varphi_l=0.
\end{eqnarray*}
Therefore,
\begin{eqnarray}\label{11-1}
w_{11;1}=(-\lambda V\sigma^{kl}\varphi_{k1}\varphi_l-w^{\prime}_{11;1})
\end{eqnarray}
and
\begin{equation}\label{11-2}\begin{aligned}
w^{kl}w_{11;k}=&w^{kl}(-\lambda V\sigma^{pq}\varphi_{pk}\varphi_q-w^{\prime}_{11;k})
\\=&-\lambda Vw^{kl}\varphi_{pk}\sigma^{pq}\varphi_q-w^{kl}w^{\prime}_{11;k}
\\=&-\lambda V(w^{kl}\sigma_{pk}-\delta^{l}_{p}+w^{kl}\varphi_p \varphi_k)\sigma^{pq}\varphi_q-w^{kl}w^{\prime}_{11;k}
\\=&-\lambda Vw^{kl}\varphi_{k}(1+|D\varphi|^2)+\lambda V\sigma^{lp}\varphi_{p}
-w^{kl}w^{\prime}_{11;k}.\end{aligned}
\end{equation}
Then, we have by the $C^1$-estimate \eqref{Gra-est}, \eqref{ij}, \eqref{11-1} and \eqref{11-2}
\begin{equation*}\begin{aligned}
-2w_{11;1}\varphi_1&=-\lambda V\sigma^{kl}\varphi_{k1}\varphi_l\varphi_1
-w^{\prime}_{11;1}\varphi_1\\&=\lambda V\sigma^{kl}w_{k1}\varphi_l \varphi_1
-\lambda V (\varphi_1)^2-\lambda V |D \varphi|^2 (\varphi_1)^2+(\gamma^i \varphi_i)_1 \varphi_1
\\&=\lambda V\sigma^{kl}w_{k1}\varphi_l \varphi_1
-\lambda V (\varphi_1)^2-\lambda V |D \varphi|^2 (\varphi_1)^2\\&
+\gamma^{i}_{\ ; 1} \varphi_i\varphi_1-\gamma^i w_{i1}\varphi_1+\gamma^1\varphi_1
+\gamma^i\varphi_i(\varphi_1)^2\\&\leq c\lambda V(w_{11}+1)+c(w_{11}+1)\end{aligned}
\end{equation*}
and
\begin{equation*}\begin{aligned}
&\frac{1}{Vw_{11}}(2w^{kl}w_{11;k}\varphi_lw_{11}-(w_{11})^2w^{kl}\varphi_k\varphi_l+w_{11}(\varphi_{1})^2)\\ \leq&
-2\lambda(1+|D\varphi|^2)w^{kl}\varphi_k\varphi_l-\frac{2}{V}w^{kl}\varphi_lw^{\prime}_{11;k}
+c\lambda+\frac{c}{V}
\end{aligned}
\end{equation*}
Thus, combing the above inequalities and assume $w_{11}\geq 1$
(otherwise $w_{11}$ is upper bounded and our theorem holds true), we have
\begin{equation*}\begin{aligned}
&\frac{1}{Vw_{11}}(2w^{kl}w_{11;k}\varphi_lw_{11}-2
w_{11;1}\varphi_1-(w_{11})^2w^{kl}\varphi_k\varphi_l+w_{11}(\varphi_{1})^2)\\ \leq&
-2\lambda(1+|D\varphi|^2)w^{kl}\varphi_k\varphi_l-\frac{2}{V}w^{kl}\varphi_lw^{\prime}_{11;k}
+c\lambda+\frac{c}{V}+c\lambda\frac{w_{11}+1}{w_{11}}+\frac{c(w_{11}+1)}{V w_{11}}\\ \leq&
-2\lambda(1+|D\varphi|^2)w^{kl}\varphi_k\varphi_l+\frac{c}{V}(\mbox{tr}w^{kl}+1)
+c\lambda+\frac{c}{V}\end{aligned}
\end{equation*}
where we use the following inequality to get the last line,
\begin{equation}\begin{aligned}\label{cx2018}
w^{kl}\varphi_k w^{\prime}_{11; l}
&=w^{kl}\varphi_k (\gamma^{i}\varphi_{il}+\gamma^{i}_{\ ; l}\varphi_i)
\\&=(w^{kl}\sigma_{il}-\delta^{k}_{i}+w^{kl}\varphi_i \varphi_l)
\varphi_k \gamma^{i}+ w^{kl} \varphi_k \varphi_i \gamma^{i}_{\ ; l}
\\&\leq c(\mbox{tr} w^{kl}+1).\end{aligned}
\end{equation}
Now, we estimate the third line of \eqref{last-1}. Using  \eqref{11-2} and
assume $\lambda\geq 1$ and $w_{11}\geq 1$ (otherwise $w_{11}$ is upper bounded and our theorem holds true), we have
\begin{equation*}\begin{aligned}
&\frac{1}{Vw_{11}}(2w^{kl}w_{11;k}w^{\prime}_{11;l}
+w^{kl}w^{\prime}_{11;k}w^{\prime}_{11;l})\\=&
\frac{1}{Vw_{11}}(-2\lambda Vw^{kl}\varphi_k (1+|D\varphi|^2)w^{\prime}_{11;l}
+2\lambda V\sigma^{lp}\varphi_pw^{\prime}_{11;l}
-w^{kl}w^{\prime}_{11;k}w^{\prime}_{11;l})\\\leq &
\frac{1}{Vw_{11}}(\lambda V c \ (\mbox{tr} w^{kl}+1)
+c\lambda V(w_{11}+1))\\ \leq &
c\lambda (\frac{\mbox{tr} w^{kl}}{w_{11}}
+1)\end{aligned}
\end{equation*}
in view of \eqref{cx2018}, and
\begin{equation*}\begin{aligned}
\sigma^{lp}\varphi_p w^{\prime}_{11; l}
&=\sigma^{lp}\varphi_p(\gamma^{i}\varphi_{il}+\gamma^{i}_{\ ; l}\varphi_i)
\\&\leq c(w_{11}+1),\end{aligned}
\end{equation*}

\begin{equation*}\begin{aligned}
-w^{kl}w^{\prime}_{11;k}w^{\prime}_{11;l}\leq0.\end{aligned}
\end{equation*}
Inserting the above equality and \eqref{11-1} into \eqref{last-1}, we get at $(x_0, t_0)$
\begin{equation}\label{last-2}\begin{aligned}
&\frac{1}{V}w^{kl}_{\ \
;1}w_{kl;1}+w^{ij}\frac{V_{i}V_{j}}
{V^2}\\ \leq&
-2\lambda(1+|D\varphi|^2)w^{kl}\varphi_k\varphi_l+\frac{c}{V}(\mbox{tr}w^{kl}+1)
+c\lambda\bigg(\frac{\mbox{tr}w^{kl}}{w_{11}}
+1\bigg).\end{aligned}
\end{equation}
Thus,
\begin{equation*}\begin{aligned}
\mathcal{L}W \leq&
\frac{\dot{\varphi}}{V}\bigg(c\
\mbox{tr}w^{ij}+cw_{11}+c\bigg)
+\frac{\dot{\varphi}}{V}\bigg(
c\ \mbox{tr}w^{ij}+c\bigg)
\\&-\frac{\alpha\lambda\dot{\varphi}}{n}\bigg(
(1+|D\varphi|^2)w^{ij}\varphi_{i}\varphi_j-2|D\varphi|^2\bigg)\\
&-\frac{\alpha\lambda\dot{\varphi}}{n}\mbox{tr}w^{ij}+c\lambda\dot{\varphi}
+c\dot{\varphi}(1-\lambda) w_{11}\\ &
+\frac{\alpha\dot{\varphi}}{n}\bigg(-2\lambda(1+|D\varphi|^2)w^{kl}\varphi_k\varphi_l
+\frac{c}{V}(\mbox{tr}w^{kl}+1)+c\lambda(\frac{\mbox{tr}w^{kl}}{w_{11}}+1)\bigg)\\ \leq&
\frac{\dot{\varphi}}{V}c\bigg(
\mbox{tr}w^{ij}+w_{11}+1\bigg)-\frac{\alpha\lambda\dot{\varphi}}{n}\bigg(
3(1+|D\varphi|^2)w^{ij}\varphi_{i}\varphi_j-2|D\varphi|^2\bigg)\\
&-\frac{\alpha\lambda\dot{\varphi}}{n}\mbox{tr}w^{ij}+c\lambda\dot{\varphi}
+c \ \dot{\varphi}(1-\lambda) w_{11}\\ &
+\frac{\alpha\dot{\varphi}}{n}c\lambda(\frac{\mbox{tr}w^{kl}}{w_{11}}+1)\\ \leq&
\frac{\dot{\varphi}}{V}c\bigg(
\mbox{tr}w^{kl}+w_{11}+1\bigg)
\\&-\frac{\alpha\lambda\dot{\varphi}}{n}\mbox{tr}w^{kl}+c\lambda\dot{\varphi}
+c\dot{\varphi}(1-\lambda)w_{11}\\ &
+\frac{\alpha\dot{\varphi}}{n}c\lambda(\frac{\mbox{tr}w^{kl}}{w_{11}}+1)
\\ \leq&\dot{\varphi}\mbox{tr}w^{kl}(\frac{c\lambda}{w_{11}}+c-\frac{\alpha\lambda}{n})
+c\dot{\varphi}\bigg(1+\lambda+(1-\lambda)w_{11}\bigg).\end{aligned}
\end{equation*}
Since $\dot{\varphi} >0$, we take $\lambda$ and $w_{11}$ are large enough such that $(\frac{c\lambda}{w_{11}}+c-c\lambda)\leq 0$, otherwise $w_{11}$ is upper bounded. In view of $\mathcal{L}W\geq0$, we obtain
\begin{eqnarray*}
w_{11}\leq c,
\end{eqnarray*}
we conclude that $w_{11}$ has upper bounded which depends on $\alpha$. Thus, the second covariant
derivatives of $\varphi$ is bounded from below.
\end{proof}

\subsection{Double normal $C^2$ boundary estimates}

\

Let
\begin{equation*}\begin{aligned}
\mathcal{\widetilde{L}}U=&\dot{U}-Q^{ij}U_{ij}-\frac{2\beta}{n}
\frac{\dot{\varphi}}{1+|D\varphi|^2}\varphi^kU_k\\=&
\dot{U}-\frac{\alpha}{n}\dot{\varphi}w^{ij}U_{ij}-\frac{2\beta}{n}
\frac{\dot{\varphi}}{1+|D\varphi|^2}\varphi^kU_k\end{aligned}
\end{equation*}
and
\begin{eqnarray*}
q(x)=-d(x)+\eta d^2(x),
\end{eqnarray*}
where $d$ denotes the distance to $\partial\Omega$ which is a smooth function in
$\Omega_{\delta}=\{x\in \Omega: dist(x, \partial\Omega)<\delta\}$ for $\delta$ small enough
and $\eta$ denotes a constant to be chosen sufficiently large. Thus,
$q: \Omega_{\delta}\rightarrow \mathbb{R}$ is a smooth function.

To derive double normal
$C^2$ boundary estimates, we need the following lemma.

\begin{lemma}\label{q}
For any solution  $\varphi$ of the flow \eqref{Evo-1},
we can choose $\eta$ so large and $\delta$ so small such that
\begin{eqnarray*}
\mathcal{\widetilde{L}} q(x)\leq -\frac{\alpha}{4n} k_0\dot{\varphi}\ \mbox{\tr}(w^{ij})
\quad \mbox{in} \quad \Omega_{\delta},
\end{eqnarray*}
where $k_0$ is a positive
constant depending on $\partial\Omega$.
\end{lemma}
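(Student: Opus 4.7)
The plan is to compute $\mathcal{\widetilde{L}}q$ directly and then to choose the parameters $\eta$ and $\delta$ in such a way that the convexity of $\Omega$ delivers the correct sign. Since $q=-d+\eta d^2$ is time-independent, $\dot q\equiv 0$, and I would first differentiate to obtain $q_k=(-1+2\eta d)d_k$ and $q_{ij}=2\eta d_id_j+(-1+2\eta d)d_{ij}$. Substituting into the operator yields the three-term decomposition
\[
\mathcal{\widetilde{L}}q=\frac{\alpha\dot{\varphi}}{n}(1-2\eta d)w^{ij}d_{ij}-\frac{2\alpha\eta\dot{\varphi}}{n}w^{ij}d_id_j+\frac{2\beta\dot{\varphi}(1-2\eta d)}{n(1+|D\varphi|^2)}\varphi^kd_k,
\]
and the analysis consists of bounding these three terms in turn.

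The central geometric input is the strict convexity of the cone $\Sigma^n$, which passes to strict convexity of $\partial\Omega\subset\mathbb{S}^n$ and of all nearby level sets of $d$. This provides the Hessian estimate $-d_{ij}\geq k_0(\sigma_{ij}-d_id_j)$ in a tubular neighborhood $\Omega_\delta$, where $k_0>0$ is a lower bound on the principal curvatures of $\partial\Omega$. After fixing $\delta$ so small that $1-2\eta d\geq 1/2$ in $\Omega_\delta$, contracting against the positive definite $w^{ij}$ and using $|Dd|=1$ gives
\[
\frac{\alpha\dot{\varphi}}{n}(1-2\eta d)w^{ij}d_{ij}\leq -\frac{\alpha k_0\dot{\varphi}}{n}(1-2\eta d)\bigl(\mbox{tr}\,w^{ij}-w^{ij}d_id_j\bigr).
\]

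Next I would choose $\eta$ large enough (roughly $2\eta>k_0$) so that the combined $w^{ij}d_id_j$ contributions from the first two terms are non-positive, leaving the main negative term proportional to $-\dot{\varphi}\,\mbox{tr}\,w^{ij}$. The remaining first-order term is controlled using the Neumann condition: $D_\mu\varphi=0$ on $\partial\Omega$ together with $Dd=-\mu$ at $\partial\Omega$ forces $\varphi^kd_k$ to vanish on $\partial\Omega$, so by smoothness $|\varphi^kd_k|\leq Cd\leq C\delta$ on $\Omega_\delta$. The bound $\det(w_{ij})\leq c_2$ from \eqref{w-ij} combined with the arithmetic-geometric mean inequality yields a uniform lower bound $\mbox{tr}\,w^{ij}\geq c_0>0$, so the first-order remainder is of size $O(\delta)\dot{\varphi}\leq O(\delta/c_0)\dot{\varphi}\,\mbox{tr}\,w^{ij}$ and is absorbed by shrinking $\delta$ one further time. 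The constant $\frac{c_3}{4}k_0$ in the statement is then extracted using the uniform positive lower bound on $\dot\varphi$ from Lemma \ref{lemma3.2}.

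I expect the main obstacle to be getting the parameter choices in the right order. The constant $k_0$ is dictated by the fixed geometry of the cone; then $\eta$ must be chosen depending only on $k_0$; and finally $\delta$ must satisfy \emph{both} $1-2\eta\delta\geq 1/2$ (which depends on $\eta$) \emph{and} the smallness condition needed to absorb the $\varphi^kd_k$ term (which depends on $\eta$, $c_0$, and the uniform $C^1$-bound on $\varphi$). A secondary delicacy is that $\dot\varphi$ decays as $t\to\infty$ by Lemma \ref{lemma3.2}, so every absorption step must be uniform in $t$; this is exactly why the inequality is normalized by $\dot\varphi\,\mbox{tr}\,w^{ij}$ on both sides rather than by $\dot\varphi$ alone.
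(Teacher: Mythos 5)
The key gap in your argument is the claim that the Neumann condition forces $|\varphi^k d_k|\le Cd$ in $\Omega_\delta$ ``by smoothness.'' The constant in that Taylor bound is governed by the normal derivative of $\varphi^k d_k$, i.e.\ by terms of the form $\sigma^{kl}\varphi_{lm}d_k d^m+\sigma^{kl}\varphi_l d_{km}d^m$, and the first of these contains second derivatives of $\varphi$. But Lemma~\ref{q} is a step \emph{toward} the $C^2$-estimate: at this point the only uniform bounds available are on $\varphi$, $\dot\varphi$, $D\varphi$, and $\det(w_{ij})$ via~\eqref{w-ij}, and there is no control on $|D^2\varphi|$. Invoking such control is circular. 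Without it the first-order contribution is only $O(1)\,\dot\varphi$, not $O(\delta)\,\dot\varphi$, and absorbing it using $\mbox{tr}\,w^{ij}\ge c_0$ produces a term of size $\frac{C}{c_0}\dot\varphi\,\mbox{tr}\,w^{ij}$ that can swamp the $-\frac{\alpha k_0}{2n}\dot\varphi\,\mbox{tr}\,w^{ij}$ coming from convexity; with $\eta$ fixed at roughly $k_0/2$, as in your scheme, you have no free parameter left to overcome it.

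The paper's proof dodges this by a Hadamard/AM--GM step, which is the real engine of the lemma and which your proposal leaves out. Since $q_{ij}$ gains an extra $2\eta$ in the normal $\mu\otimes\mu$ direction, the convexity estimate sharpens to $w^{ij}q_{ij}\ge \frac{1}{2}k_0(w^{11}+\cdots+w^{n-1,n-1})+2\eta\,w^{nn}$. Applying the arithmetic--geometric mean inequality to this weighted sum and then Hadamard's inequality $\prod_i w^{ii}\ge\det(w^{ij})\ge c_2^{-1}$ from~\eqref{w-ij} gives $w^{ij}q_{ij}\ge c(n,k_0)\,c_2^{-1/n}\,\eta^{1/n}$. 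The first-order term is bounded by a fixed constant times $(1+\eta\delta)$ using only the $C^1$-estimate, and is then dominated by sending $\eta$ large while keeping $\eta\delta\le 1/4$. So your ordering of the parameter choices (geometry fixes $k_0$, then $\eta$, then $\delta$) is right, but $\eta$ has to be taken large for a different reason than you give, and the first-order term must be beaten by the $\eta^{1/n}$ gain, not by a vanishing-on-the-boundary argument.
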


\begin{proof}
Differentiating the function $q$ twice with $x$,
\begin{eqnarray}\label{q-1}
q_i(x)=-d_i(x)+2\eta d(x)d_{i}(x)
\end{eqnarray}
and
\begin{eqnarray}\label{q-2}
q_{ij}(x)=-d_{ij}(x)+2\eta d_{i}(x)d_{j}(x)+2\eta d(x)d_{ij}(x).
\end{eqnarray}
For any $x_0\in \partial \Omega$, after a rotation of
the first $n-1$ coordinates and remembering that $\mu(x_0)=e_n$, we have
\begin{equation*}
d_{ij}(x_0)=\left(
              \begin{array}{cccc}
                 -\kappa_1 & 0 & \cdot\cdot\cdot & 0 \\
                 \cdot\cdot\cdot& \cdot\cdot\cdot & \cdot\cdot\cdot & \cdot\cdot\cdot \\
                0 & 0 & \cdot\cdot\cdot & -\kappa_{n-1} \\
                0& 0 & \cdot\cdot\cdot & 0 \\
              \end{array}
            \right),
\end{equation*}
where there is a constant $k_0=k_0(\partial\Omega)>0$ such that $\kappa_i\geq k_0$ for all
principle curvature $\kappa_i, i=1,2,...,n-1$ of $\partial\Omega$ and for any $x_0\in \partial \Omega$.
Since the differential of the distance coincide withe the inward normal vector $-Dd(x_0)=\mu(x_0)=e_n$.
Thus, it holds at $x_0$
\begin{eqnarray*}
q_{ij}(x_0)=\left(
              \begin{array}{cccc}
                 \kappa_1(1-2\eta d) & 0 & \cdot\cdot\cdot & 0 \\
                 \cdot\cdot\cdot& \cdot\cdot\cdot & \cdot\cdot\cdot & \cdot\cdot\cdot \\
                0 & 0 & \cdot\cdot\cdot & \kappa_{n-1}(1-2\eta d) \\
                0& 0 & \cdot\cdot\cdot & 2\eta \\
              \end{array}
            \right).
\end{eqnarray*}
Choosing $\eta \delta\leq\frac{1}{4}$, we have
\begin{eqnarray*}
w^{ij}q_{ij}\geq\frac{1}{2}k_0(w^{11}+w^{22}
+...+w^{n-1 \ n-1})+2\eta w^{nn}.
\end{eqnarray*}
On the one hand, we can choose $\eta\geq\frac{1}{4}k_0$ such that
\begin{eqnarray}\label{D22}
w^{ij}q_{ij}\geq \frac{1}{2}k_0\mbox{tr}(w^{ij}).
\end{eqnarray}
On the other hand, using the inequality of arithmetic and geometric, we obtain
\begin{eqnarray*}
w^{ij}q_{ij}\geq c(n, k_0)\eta^{\frac{1}{n}}
\bigg(\prod_{i=1}^{n}w^{ii}\bigg)^{\frac{1}{n}}.
\end{eqnarray*}
The Hadamard'inequality \cite{Hor} for positive definite matrices
\begin{eqnarray*}
\det(w^{ij})\leq
\bigg(\prod_{i=1}^{n}w^{ii}\bigg)
\end{eqnarray*}
implies
\begin{eqnarray*}
w^{ij}q_{ij}\geq c(n, k_0)\eta^{\frac{1}{n}}
\mbox{det}(w^{ij})^{\frac{1}{n}}.
\end{eqnarray*}
Recalling \eqref{w-ij}, there is a positive constant $c_2$ such that
\begin{eqnarray*}
\mbox{det}(w^{ij})=\mbox{det}^{-1}(w_{ij})\geq\frac{1}{c_2}>0,
\end{eqnarray*}
it follows that
\begin{eqnarray*}\label{key estimate}
w^{ij}q_{ij}\geq \frac{1}{c_2}c(n, k_0)\eta^{\frac{1}{n}}.
\end{eqnarray*}
Using the $C^1$-estimate \eqref{Gra-est}, we have
\begin{eqnarray*}
\bigg|\frac{2\beta}{n}\frac{1}{1+|D\varphi|^2}\varphi^kq_k\bigg|=\bigg|\frac{2\beta}{n}\frac{1}{1+|D\varphi|^2}
\varphi^k(-d_k+2\eta dd_{k})\bigg|\leq c_3(1+\eta \delta),
\end{eqnarray*}
for all $(x, t)\in \Omega_{\delta}\times [0, T]$.
Choose $\eta$ so large and $\delta$ so small such that
\begin{eqnarray*}
\frac{1}{2}\frac{1}{c_2}c(n, k_0)\eta^{\frac{1}{n}}\geq c_3(1+\eta \delta).
\end{eqnarray*}
Thus, we have from \eqref{D22}
\begin{eqnarray*}
\mathcal{\widetilde{L}} q(x)&=&
-\frac{\alpha}{2n}\dot{\varphi}w^{ij}q_{ij}-\frac{2\beta}{n}\dot{\varphi}
\frac{1}{1+|D \varphi|^2}\varphi^k q_k-\frac{\alpha}{2n}\dot{\varphi}w^{ij}q_{ij}\\&\leq&-\frac{1}{2}\frac{1}{c_2}c(n, k_0)\eta^{\frac{1}{n}}+c_3(1+\eta \delta)
-\frac{\alpha}{2n}\dot{\varphi}w^{ij}q_{ij}\\&\leq&-\frac{\alpha}{4n} k_0\dot{\varphi}\mbox{tr}(w^{ij}).
\end{eqnarray*}
\end{proof}

Clearly, choosing $\frac{1}{8}\leq \eta \delta\leq \frac{1}{4}$,
from \eqref{q-1} and \eqref{q-2}, we make sure that
$q$ satisfied the following properties in $\Omega_{\delta}$:
\begin{eqnarray*}
-\delta+\eta\delta^{2}\leq q(x) \leq 0,
\end{eqnarray*}
\begin{eqnarray}\label{Dq-1}
\frac{1}{2}\leq |D q|\leq 1,
\end{eqnarray}
\begin{eqnarray}\label{Dq-2}
\frac{k_{0}}{2}\sigma_{ij}\leq D^2 q\leq C(\partial \Omega)(1+\eta)\sigma_{ij}
\end{eqnarray}
and
\begin{eqnarray}\label{Dq-3}
|D^3 q|\leq C(\partial \Omega)(1+\eta).
\end{eqnarray}

It is easy to see
\begin{eqnarray}\label{nor}
\frac{D q}{|D q|}=\mu
\end{eqnarray}
for unit outer normal $\mu$ on the boundary $\partial\Omega$.
We consider the following function
$$P(x, t)=D\varphi \cdot Dq+A q(x),$$
where the constant $A$ will be choose later.

\begin{lemma}
For any solution  $\varphi$ of the flow \eqref{Evo-1} in
$\Omega\times [0, T]$ for some fixed $T<T^{*}$, we have
$$\mathcal{\widetilde{L}}P(x, t)\leq 0.$$
\end{lemma}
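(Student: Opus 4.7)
The plan is to write
\begin{equation*}
\mathcal{\widetilde{L}}P = \mathcal{\widetilde{L}}(\varphi_k q^k) + A\,\mathcal{\widetilde{L}}q
\end{equation*}
and control each summand separately. The second piece is already taken care of by Lemma~\ref{q}, which supplies the clean negative bound $A\,\mathcal{\widetilde{L}}q \leq -\tfrac{c_3}{4}k_0 A\, \dot{\varphi}\,\mathrm{tr}(w^{ij})$. Thus it suffices to show that $\mathcal{\widetilde{L}}(\varphi_k q^k)$ is dominated by a constant multiple of $\dot{\varphi}\,\mathrm{tr}(w^{ij})$, after which choosing $A$ large enough will yield $\mathcal{\widetilde{L}}P\leq 0$.

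To estimate $\mathcal{\widetilde{L}}(\varphi_k q^k)$, I would reuse the general formula \eqref{w11-} with $\gamma^k:=q^k$, adjusted for the fact that $\mathcal{\widetilde{L}}$ differs from $\mathcal{L}$ only by dropping the $-\tfrac{2\alpha}{n}\dot{\varphi}w^{kl}\varphi_l$ piece of $Q^k$. This contributes the additional term $-\tfrac{2\alpha}{n}\dot{\varphi}w^{kl}\varphi_l(\varphi_p q^p)_k$, which after expanding $(\varphi_p q^p)_k = \varphi_{pk}q^p + \varphi_p q^p_{\,,k}$ and using the identity $w^{kl}\varphi_{pk} = w^{kl}\sigma_{pk} - \delta^l_p + w^{kl}\varphi_p\varphi_l$ (as already used in the interior estimate) becomes linear in $w^{kl}$. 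All remaining terms in the formula involve only derivatives of $q$, which are uniformly bounded in $\Omega_\delta$ by \eqref{Dq-1}--\eqref{Dq-3}, together with $|D\varphi|$ and $\varphi$, which are controlled by \eqref{Gra-est} and \eqref{C^0}, while $\dot{\varphi}>0$ is bounded by Lemma~\ref{lemma3.2}. The term $(\alpha-1)Q\varphi_l q^l$ is harmless because $0<\alpha<1$ and $|\varphi_l q^l|$ is uniformly bounded. Collecting these observations gives
\begin{equation*}
\mathcal{\widetilde{L}}(\varphi_k q^k) \leq C_1\, \dot{\varphi}\bigl(1+\mathrm{tr}(w^{ij})\bigr)
\end{equation*}
for some constant $C_1=C_1(n,M_0)$.

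To conclude, the AM--GM inequality applied to the eigenvalues of the positive definite matrix $w^{ij}$, together with the upper bound $\det(w_{ij})\leq c_2$ from \eqref{w-ij}, produces a uniform positive lower bound $\mathrm{tr}(w^{ij})\geq n c_2^{-1/n}$, so $1+\mathrm{tr}(w^{ij})\leq C_2\,\mathrm{tr}(w^{ij})$ with $C_2=C_2(n,M_0)$. Combining with Lemma~\ref{q},
\begin{equation*}
\mathcal{\widetilde{L}}P \leq \bigl(C_1 C_2 - \tfrac{c_3}{4}k_0 A\bigr)\dot{\varphi}\,\mathrm{tr}(w^{ij}),
\end{equation*}
which is non-positive once $A \geq 4 C_1 C_2/(c_3 k_0)$. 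The main obstacle is the bookkeeping in expanding $\mathcal{\widetilde{L}}(\varphi_k q^k)$ and verifying that every term indeed fits into the target bound $\dot{\varphi}\bigl(1+\mathrm{tr}(w^{ij})\bigr)$; the structural fact that makes the absorption into $\mathrm{tr}(w^{ij})$ legal is the positive lower bound on $\mathrm{tr}(w^{ij})$ coming from \eqref{w-ij}.
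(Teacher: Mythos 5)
Your proposal is correct and follows essentially the same route as the paper: decompose $\mathcal{\widetilde{L}}P$ into the $\varphi_k q^k$ piece and $A\,\mathcal{\widetilde{L}}q$, bound the former by $C\dot{\varphi}\bigl(1+\mathrm{tr}(w^{ij})\bigr)$ via the identity $w^{kl}\varphi_{pk}=w^{kl}\sigma_{pk}-\delta^{l}_{p}+w^{kl}\varphi_p\varphi_l$ together with the $C^1$-estimate and the bounds \eqref{Dq-1}--\eqref{Dq-3}, then invoke Lemma~\ref{q} and absorb the leftover constant into $\mathrm{tr}(w^{ij})$ using the AM--GM lower bound $\mathrm{tr}(w^{ij})\geq n\,c_2^{-1/n}$ from \eqref{w-ij}. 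One minor caution: formula \eqref{w11-} as printed drops the terms $-2Q^{ij}\varphi_{ki}\gamma^{k}_{\ ,j}$ and $-Q^{ij}\varphi_{k}\gamma^{k}_{\ ,ij}$ arising from $-Q^{ij}(\varphi_k\gamma^k)_{ij}$, so you would want to recompute $\mathcal{\widetilde{L}}(\varphi_k q^k)$ directly (as the paper does in this lemma) rather than quote \eqref{w11-} verbatim; these extra terms still land inside your target bound $C\dot{\varphi}\bigl(1+\mathrm{tr}(w^{ij})\bigr)$, so the conclusion is unaffected.
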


\begin{proof}
The calculation of $\mathcal{\widetilde{L}}P(x, t)$ is similar to that of \eqref{w11-}.
We derivative this function $P(x, t)$ twice with $x$
\begin{eqnarray*}
P_{i}=\varphi_{li}q^{l}+\varphi_{l}q^{l}_{\ i}+Aq_i
\end{eqnarray*}
and
\begin{eqnarray*}
P_{ij}=\varphi_{lij}q^{l}+\varphi_{li}q^{l}_{\ j}+\varphi_{lj}q^{l}_{\ i}+\varphi^{l}q_{lij}+Aq_{ij}.
\end{eqnarray*}
Differentiating $P(x, t)$  with $t$, we have
\begin{equation*}\begin{aligned}
P_{t}=&D \varphi_{t}\cdot D q\\=&\bigg(Q^{ij}\varphi_{ijl}+Q^{k}\varphi_{kl}
+(\alpha-1)Q\varphi_l\bigg)q^{l}\\
=&\bigg(Q^{ij}\varphi_{lij}+Q^{ij}\varphi_{j}\sigma_{il}
-Q^{ij}\varphi_{l}\sigma_{ij}+Q^{k}\varphi_{kl}+(\alpha-1)Q\varphi_l\bigg)q^{l}.\end{aligned}
\end{equation*}
Therefore, we have
\begin{equation*}\begin{aligned}
\mathcal{\widetilde{L}}P(x, t)=&P_t-Q^{ij}P_{ij}-2\beta\frac{\dot{\varphi}}{1+|D\varphi|^2}\varphi^kP_k
\\=&-2Q^{ij}\varphi_{li}q^{l}_{\ j}
+ Q^{ij}\bigg(\sigma_{il}\varphi_jq^l
-\sigma_{ij}\varphi_l q^l\bigg)\\&- Q^{ij}\varphi^{l}q_{lij}-2\beta\frac{\dot{\varphi}}{1+|D\varphi|^2}\varphi^k\varphi^l q_{lk}\\&
-\frac{2\alpha}{n}\dot{\varphi} w^{kl}\varphi_l\varphi_{km}q^m+(\alpha-1)Q\varphi_lq^{l}+A\mathcal{L}q(x).\end{aligned}
\end{equation*}
Since
\begin{eqnarray*}
w^{ij}\varphi_{lj}=w^{ij}\sigma_{lj}-\delta^{i}_{l}+w^{ij}\varphi_{j}\varphi_{l},
\end{eqnarray*}
we obtain by using the $C^1$-estimate \eqref{Gra-est} and \eqref{Dq-1}, \eqref{Dq-2}, \eqref{Dq-3}
\begin{eqnarray*}
\mathcal{\widetilde{L}}P(x, t)&\leq&C(1+\eta)\dot{\varphi}\mbox{tr}w^{ij}+C(1+\eta)\dot{\varphi}+A\mathcal{L}q(x).
\end{eqnarray*}
Using Lemma \ref{q}, we get
\begin{eqnarray*}
\mathcal{\widetilde{L}}P(x, t)\leq C\dot{\varphi}\bigg((1+\eta-A)\mbox{tr}w^{ij}+(1+\eta)\bigg).
\end{eqnarray*}
Recalling \eqref{w-ij},
\begin{eqnarray*}
\bigg(\frac{\mbox{tr}(w^{ij})}{n}\bigg)^n\geq\mbox{det}(w^{ij})=\mbox{det}^{-1}(w_{ij})\geq\frac{1}{c_2}>0.
\end{eqnarray*}
Choosing $A\geq \frac{c_{2}^{\frac{1}{n}}}{n}(1+\eta)+\eta+1$, we get
\begin{eqnarray*}
\mathcal{\widetilde{L}}P(x, t)\leq 0.
\end{eqnarray*}
\end{proof}

\begin{proposition}\label{NN}
For $\varphi$ be a solution of the flow \eqref{Evo-1} in
$\Omega\times [0, T]$ for some fixed $T<T^{*}$, $\varphi_{\mu\mu}$ is
uniformly bounded from below, i.e., there exists $C=C(n, M_0)$
such that
\begin{equation*}
-\varphi_{\mu\mu}\leq C(n, M_0),
\qquad \ \ \forall (x, t)\ \ \mbox{on} \ \  \partial\Omega\times [0, T],
\end{equation*}
where $\varphi_{\mu\mu}:=\varphi_{ij}\mu^i\mu^j$.
\end{proposition}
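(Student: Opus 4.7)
The plan is to combine the subsolution property $\mathcal{\widetilde L}P\le 0$ from the preceding lemma with the Neumann boundary condition and a barrier built from $q$, so that a Hopf-type argument at the boundary yields a one-sided bound on $\varphi_{\mu\mu}$. First I observe that on $\partial\Omega$ one has $q=0$ and, by \eqref{nor} together with \eqref{Dq-1}, $Dq=\mu$ with $|Dq|=1$; hence
\[P=D\varphi\cdot Dq+Aq=D_\mu\varphi=0\]
identically on $\partial\Omega\times[0,T]$ thanks to the Neumann condition in \eqref{Evo-1}. Applying the parabolic maximum principle to $\mathcal{\widetilde L}P\le 0$ on $\Omega_\delta\times[0,T]$, the parabolic-boundary values of $P$ are zero on $\partial\Omega\times[0,T]$ and are controlled on $\{d=\delta\}\times[0,T]$ and at $t=0$ by the gradient estimate \eqref{Gra-est} combined with \eqref{Dq-1}. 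Consequently $P\le C_0$ throughout $\Omega_\delta\times[0,T]$, for some $C_0=C_0(n,M_0)$.

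Next I introduce the barrier $\tilde P:=P+Kq$ with $K>0$ to be chosen large. By Lemma \ref{q}, $\mathcal{\widetilde L}q<0$, so $\mathcal{\widetilde L}\tilde P\le 0$ as well. Since $q=0$ on $\partial\Omega$, $\tilde P$ still vanishes there; on $\{d=\delta\}$ the choice $\eta\delta\le 1/4$ gives $q\le -\delta/2$, so $\tilde P\le C_0-K\delta/2$, which is non-positive once $K\ge 2C_0/\delta$; taking $K$ possibly larger also ensures $\tilde P\le 0$ at $t=0$. The parabolic maximum principle then forces $\tilde P\le 0$ throughout $\Omega_\delta\times[0,T]$.

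For any $x_0\in\partial\Omega$ and $t\in[0,T]$ we have $\tilde P(x_0,t)=0$ and $\tilde P\le 0$ in $\Omega_\delta$, so the difference quotient along the inward direction $-\mu$ yields $\partial_\mu\tilde P(x_0,t)\ge 0$, i.e.\ $\partial_\mu P(x_0,t)\ge -K\,\partial_\mu q(x_0,t)=-K$, using $\partial_\mu q=1$ at $\partial\Omega$. A direct computation using $q_i=\mu_i$ and $\mu^j q_{ij}=2\eta\mu_i$ on $\partial\Omega$ (the latter from $|Dd|=1$ giving $\mu^j d_{ij}=0$) together with $D_\mu\varphi=0$ yields
\[\partial_\mu P(x_0,t)=\mu^i\mu^j\varphi_{ij}+2\eta\,D_\mu\varphi+A=\varphi_{\mu\mu}+A.\]
Combining, $-\varphi_{\mu\mu}\le A+K=C(n,M_0)$, which is the desired bound.

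The only delicate point is choosing $K$ large enough so that $\tilde P$ is non-positive on the parabolic boundary without spoiling the subsolution property; this is immediate because the sign in Lemma \ref{q} is strictly negative and independent of $K$, so adding $Kq$ only strengthens the inequality $\mathcal{\widetilde L}\tilde P\le 0$. Everything else reduces to the already-established $C^0$ and $C^1$ bounds and to the explicit geometric identities \eqref{Dq-1}--\eqref{nor}.
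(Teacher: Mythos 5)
Your proposal is correct and takes essentially the same route as the paper: apply the parabolic maximum principle on $\Omega_\delta\times[0,T]$ to the auxiliary function built from $D\varphi\cdot Dq$ and $q$, and then read off the double-normal bound from the resulting Hopf boundary-derivative inequality on $\partial\Omega$. Your barrier $\tilde P=P+Kq$ is the paper's $P$ with $A$ replaced by $A+K$, and your exact identity $\partial_\mu P=\varphi_{\mu\mu}+A$ (using $\mu^j q_{ij}\varphi^i=2\eta D_\mu\varphi=0$) is a slightly cleaner version of the paper's crude bound $P_\mu\le\varphi_{\mu\mu}+C+A$.
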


\begin{proof}
It is easy to see from the boundary condition in the flow \eqref{Evo-1}
\begin{eqnarray*}
P=0 \ \ \mbox{on} \ \ \partial \Omega\times [0, T].
\end{eqnarray*}
On the $(\partial \Omega_{\delta}\setminus \partial \Omega)\times [0, T]$, we have
\begin{eqnarray*}
P\leq C-A\delta\leq 0,
\end{eqnarray*}
provided $A\geq \frac{C}{\delta}$. Applying the maximum principle, it follows that
\begin{eqnarray*}
P\leq0 \ \ \mbox{in} \ \ \Omega_{\delta}\times [0, T].
\end{eqnarray*}
Assume $(x_0, t_0)\in \partial\Omega\times [0,T]$ is the minimum point
of $\varphi_{\mu\mu}$ on $\partial\Omega\times [0,T]$, using the $C^1$-estimate \eqref{Gra-est},
we have by noticing \eqref{nor}
\begin{eqnarray*}
0\leq P_{\mu}(x_0, t_0)=\varphi_{i\mu}q^i+\varphi^i q_{i\mu}+A q_{\mu}\leq \varphi_{\mu\mu}+C+A.
\end{eqnarray*}
Therefore,
\begin{eqnarray*}
-\varphi_{\mu\mu}\leq C+A.
\end{eqnarray*}
\end{proof}

\subsection{Remaining $C^2$ boundary estimates}

\

We have obtained interior estimates under the assumption that the maximum of $W$ is in the
interior of $\Omega$. Now we have to contemplate
the possibility that the maximum of $W$ is not in the
interior of $\Omega$.
Since the double normal boundary estimates have been done in the previous subsection,
we shall follow the similar discussion as those done by
Lions-Trudinger-Urbas in \cite{LTU} to get remaining $C^2$ boundary estimates.

\begin{proposition}\label{4.071}
Let $\varphi$ be a solution of the flow \eqref{Evo-1} in
$\Omega\times [0, T]$ for some fixed $T<T^{*}$, assume $W$ attains
its maximum on $\partial \Omega\times \mathbb{S}^{n-1}\times
[0, T]$. Then, there exists $C=C(n, M_0)$ such that
\begin{equation*}
C(n, M_0)\leq \varphi_{ij}(x, t)\xi^i\xi^j, \qquad \ \ \forall (x,
\xi, t)\in \partial\Omega\times \mathbb{S}^{n-1}\times [0, T].
\end{equation*}
\end{proposition}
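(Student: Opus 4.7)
The plan is to apply the Lions--Trudinger--Urbas boundary technique. The correction tensor $w'_{ij}$ in the definition of $W$ was engineered precisely so that, on $\partial\Omega$, the symmetric form $w + w'$ is block-diagonal relative to the splitting of $T\mathbb{S}^n$ into directions tangential and normal to $\partial\Omega$. This reduces the maximum direction $\xi_0$ at a boundary max point to two cases: either $\xi_0 = \pm\mu$ or $\xi_0 \in T_{x_0}\partial\Omega$.

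Concretely, I would fix a boundary maximum point $(x_0,\xi_0,t_0)$ of $W$ with $x_0\in\partial\Omega$ and choose normal coordinates at $x_0$ so that $\mu(x_0) = e_n$. Differentiating the Neumann condition $\varphi_\mu = 0$ tangentially along $\partial\Omega$ gives $\varphi_{n\alpha}(x_0) = -\mu^i_{,\alpha}\varphi_i$ for $\alpha = 1,\ldots,n-1$; plugging this into the definition of $w_{ij}$ (and using $\varphi_n = 0$), one finds $w_{\alpha n}(x_0) = \mu^i_{,\alpha}\varphi_i$. A direct computation of $w'_{\alpha n}$ from its defining formula at the same point yields $w'_{\alpha n}(x_0) = -\mu^i_{,\alpha}\varphi_i$, so $(w+w')_{\alpha n}(x_0) = 0$. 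Since $\xi_0$ maximizes the Rayleigh quotient $((w+w')\xi,\xi)/(\sigma\xi,\xi)$ and is therefore an eigenvector of $w+w'$ relative to $\sigma$, the block structure forces $\xi_0$ to be either $\pm\mu$ or tangential to $\partial\Omega$.

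If $\xi_0 = \mu$, the defining formula gives $w'(\mu,\mu) = 0$, so $W(x_0,\mu,t_0) = \log(w_{\mu\mu} + C) + \tfrac{1}{2}\lambda|D\varphi|^2$. On $\partial\Omega$ we have $w_{\mu\mu} = 1 - \varphi_{\mu\mu}$, and Proposition \ref{NN} bounds $-\varphi_{\mu\mu}$ from above; together with the $C^1$-estimate \eqref{Gra-est}, this bounds $W(x_0,\mu,t_0)$ and hence $W$ everywhere. If instead $\xi_0$ is tangential, extend it locally to a unit tangent vector field $\xi$; since $\langle\xi,\mu\rangle \equiv 0$ near $x_0$ on $\partial\Omega$, the formula for $w'$ shows $w'(\xi,\xi)\equiv 0$ there, so $W$ restricted to $\partial\Omega$ agrees with $\log(w_{ij}\xi^i\xi^j/\sigma_{ij}\xi^i\xi^j + C) + \tfrac{1}{2}\lambda|D\varphi|^2$. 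The interior computation from Proposition \ref{interior} still yields $\mathcal{L}W \leq 0$ when $w_{11}$ is large, so Hopf's lemma at the boundary max forces $D_\mu W(x_0,t_0) > 0$. On the other hand, computing $D_\mu W$ at $(x_0,t_0)$ directly and using the tangential derivative of the Neumann condition to rewrite $\varphi_{\mu k}$ in terms of bounded tangential quantities should give $D_\mu W(x_0,t_0)\leq C$ independently of $w_{11}$, a contradiction unless $w_{11}(x_0,t_0)$ is bounded.

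The main obstacle is the last step in the tangential case: verifying that all terms scaling with $w_{11}$ in $D_\mu(w_{11}+w'_{11})(x_0,t_0)$ are killed by the $D_\mu w'_{11}$ contribution, after repeated use of the Neumann condition and its tangential derivative. This cancellation is precisely the reason the correction tensor $w'$ was introduced in \cite{LTU} and reused in \cite{San,Sch1}; verifying it here is algebraically involved but relies only on the identities already developed in the proof of Proposition \ref{interior} together with the boundary relation $(w+w')_{\alpha n}(x_0) = 0$ established above.
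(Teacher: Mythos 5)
Your opening observation is correct and is a nice way to organize the case analysis. The component computation does give $(w+w')_{\alpha n}(x_0)=0$ at a boundary point in suitable coordinates: $w_{\alpha n}=-\varphi_{\alpha n}=\mu^i_\alpha\varphi_i$ by the tangentially differentiated Neumann condition, while the defining formula gives $w'_{\alpha n}=-\mu^i_\alpha\varphi_i$. Since $\xi_0$ maximizes the Rayleigh quotient of $w+w'$ at $(x_0,t_0)$, the block structure lets you take $\xi_0$ tangential or $\xi_0=\mu$. The paper reaches the same conclusion in its ``non-tangential'' case (ii) by the LTU decomposition $\xi_0=\vartheta\tau+\sqrt{1-\vartheta^2}\,\mu$, showing that the cross term in $(w+w')_{\xi_0\xi_0}$ cancels and hence that maximality forces $-\varphi_{\tau\tau}\le-\varphi_{\mu\mu}$; these are the same algebraic fact (absence of the $\tau\mu$ block) packaged differently. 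So up to this point your route is equivalent, arguably cleaner.

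However, your account of the tangential case contains a genuine misconception that would derail the proof. You attribute the decisive control to a cancellation ``$D_\mu w'_{11}$ kills the terms scaling with $w_{11}$ in $D_\mu(w_{11}+w'_{11})$,'' and you assert that this should make $D_\mu W(x_0,t_0)\le C$ ``independently of $w_{11}$.'' Neither claim is right, and the second is also not the logical step you need. In the paper's argument the term $w'_{\xi_0\xi_0;\mu}$ is merely \emph{bounded} (using $|\varphi_{\mu\xi}|\le c$ from the once-differentiated Neumann condition and $|\varphi_{\mu\mu}|\le c$ from Proposition \ref{NN}); it provides no cancellation. The mechanism that actually closes the estimate is that differentiating the Neumann condition \emph{twice} tangentially, combined with strict convexity of $\partial\Omega$ and the diagonalization at $x_0$, gives a term of the \emph{wrong sign and the right size}: $\varphi_{\mu\xi_0\xi_0}\ge c_0\,w_{11}-c$ with $c_0>0$ coming from the positivity of the second fundamental form of $\partial\Omega$. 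Inserting this into $w_{11;\mu}=-\varphi_{11\mu}+2\varphi_1\varphi_{1\mu}$ yields $w_{11;\mu}\le -c_0 w_{11}+c'$. Then the first-order maximum condition at the boundary, $D_\mu W\ge 0$, together with $\lambda\varphi^i\varphi_{i\mu}\le 0$ (again from convexity of $\partial\Omega$) and the boundedness of $w'_{11;\mu}$, directly forces $0\le -c_0 w_{11}+c''$, i.e.\ a bound on $w_{11}$. Note this is the weak version of the boundary-point maximum condition; invoking Hopf's lemma (strict inequality) is unnecessary and would anyway require knowing $\mathcal{L}W\le 0$ in a full neighborhood, which you have not established. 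Finally, the identities you cite ``from the proof of Proposition~\ref{interior}'' do not contain this boundary-differentiation computation; it is a separate calculation. So the outline is salvageable, but as written the tangential case rests on a cancellation that does not occur, and the ``$D_\mu W\le C$, contradiction'' step does not give a contradiction with $D_\mu W>0$ for positive $C$.
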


\begin{proof}
Assume $W$ attains its maximum at a point $(x_0, \xi_0, t_0)\in
\partial\Omega\times S^{n-1}\times [0, T]$. From Proposition
\ref{NN},we know
\begin{equation*}
C(n, M_0)\leq \varphi_{\mu\mu}, \qquad \ \ \forall ( x, t)\in
\partial\Omega\times[0, T].
\end{equation*}
Thus, the remaining case is $\xi_0\neq \mu$. Without loss of
generality that $W$ attains its maximum at a point $(x_0, \xi_0,
t_0)\in \partial\Omega\times S^{n-1}\times [0, T]$ with $\xi_0\neq
\mu$. We represent $\partial \Omega$ locally as graph $f$ over its
tangent plane at a fixed point $x_0\in \partial \Omega$ such that
$\Omega=\{(x^n, \widehat{x}): x^n<f(\widehat{x})\}$ and we
distinguish two cases.

(i) $\xi_0$ is tangential: if $\xi_0$ is tangential to $\partial \Omega$, we differentiate the boundary condition
\begin{eqnarray*}
\mu^i \varphi_i=0
\end{eqnarray*}
with respect to tangential directions $\xi_0$
\begin{eqnarray*}
\mu^{i}_{\ ;\xi_0} \varphi_i+\mu^i\varphi_{i\xi_0}
+\mu^i\varphi_{in}f_{\xi_0}=0,
\end{eqnarray*}
then at $x_0$
\begin{eqnarray*}
\mu^i_{\ ;\xi_0} \varphi_i+\mu^i\varphi_{i\xi_0}=0
\end{eqnarray*}
in view of $Df(\widehat{x}_0)=0$, which implies together with the $C^1$-estimate \eqref{Gra-est}
\begin{eqnarray*}
|\mu^i\varphi_{i\xi_0}|\leq c.
\end{eqnarray*}
We differentiate the boundary condition again
and we get at $x_0$ in view of $Df(\widehat{x}_0)=0$
\begin{eqnarray*}
\mu^i_{\ ;\xi_0\xi_0} \varphi_i+2\mu^i_{\ ;\xi_0} \varphi_{i\xi_0}+\mu^i \varphi_{i\xi_0\xi_0}
+\mu^i\varphi_{in}f_{\xi_0\xi_0}=0.
\end{eqnarray*}

\begin{remark}
We put vectors as indices to indicate products as
$$\varphi_{\mu\xi\xi}:=\mu^{i}\varphi_{ijk}\xi^{j}\xi^{k}$$
and not covariant derivatives in the corresponding direction
$$\varphi_{i\xi\xi}\neq(\xi^{j}\varphi_{ij})_{; \xi^{k}}=\xi^{k}\xi^{j}_{; k}\varphi_{ij}+\xi^{k}\xi^{j}\varphi_{ijk}.$$
Analogously we write
$$\varphi_{\xi\xi}:=\varphi_{ij}\xi^{i}\xi^{j}, \quad w_{\xi\xi}:=w_{ij}\xi^{i}\xi^{j}, \quad w_{\mu\mu}:=w_{ij}\mu^{i}\mu^{j},
\quad \sigma_{\xi\xi}:=\sigma_{ij}\xi^{i}\xi^{j},...$$
\end{remark}

$C^1$-estimates \eqref{Gra-est} and double normal estimates provide at $x_0$
$$\mu^i_{\ ;\xi_0\xi_0} \varphi_i\leq c$$
and
$$\mu^i\varphi_{in}f_{\xi_0\xi_0}\leq c$$
in view of $D^2f(\widehat{x}_0)<0$.
So we obtain
\begin{eqnarray}\label{Tan1}
\varphi_{\mu \xi_0\xi_0}\geq-2\mu^i_{\ ;\xi_0} \varphi_{i\xi_0}-c
\geq2\mu^i_{\ ;\xi_0} (1+\varphi_{i}\varphi_{\xi_0}-\varphi_{i\xi_0})-
c=2\mu^i_{\ ;\xi_0}w_{i\xi_0}-
c,
\end{eqnarray}
As already noted, $\xi_0$ is an eigenvector of
$w_{ij}(x_0, t_0)+w^{\prime}_{ij}(x_0, t_0)$ to
an eigenvalue $\lambda_0$, since it corresponds to a maximal direction. Therefore, it holds
\begin{eqnarray*}
\mu^i_{\ ;\xi_0}w_{i\xi_0}(x_0, t_0)
&=&\xi_{0}^{j}\mu^i_{\ ; j}(w_{ik}
+w^{\prime}_{ik})\xi_{0}^{k}-\xi_{0}^{j}\mu^i_{\ ; j}w^{\prime}_{ik}\xi_{0}^{k}
\\&=&\lambda_0\xi_{0}^{j}\mu^i_{\ ; j}\sigma_{ik}
\xi_{0}^{k}-\xi_{0}^{j}\mu^i_{\ ; j}w^{\prime}_{ik}\xi_{0}^{k},
\end{eqnarray*}
where we may assume that $\lambda_0$ is nonnegative, because otherwise $w_{ik}
+w^{\prime}_{ik}$ would be negative definite and the needed estimate would follow
immediately. Moreover the strict convexity of $\partial\Omega$ implies the existence of a
constant $c_1>0$ such that
\begin{eqnarray*}
\xi^{j}\mu^i_{\ ; j}\sigma_{ik}\xi^{k}\geq c_1\xi^{i}\sigma_{ik}\xi^{k}
\end{eqnarray*}
for all tangential vectors $\xi$. Thus, the inequality \eqref{Tan1} becomes
\begin{eqnarray*}
\varphi_{\mu \xi_0\xi_0}&\geq&2\lambda_0\xi_{0}^{j}\mu^i_{\ ; j}\sigma_{ik}
\xi_{0}^{k}-2\xi_{0}^{j}\mu^i_{\ ; j}w^{\prime}_{ik}\xi_{0}^{k}-c\\&\geq&
2c_1\lambda_0\xi_{0}^{i}\sigma_{ik}
\xi_{0}^{k}-2\xi_{0}^{j}\mu^i_{\ ; j}w^{\prime}_{ik}\xi_{0}^{k}-c
\\&=&2c_1\xi_{0}^{i}(w_{ik}
+w^{\prime}_{ik})\xi_{0}^{k}-2\xi_{0}^{j}\mu^i_{\ ; j}w^{\prime}_{ik}\xi_{0}^{k}-c
\\&\geq&2c_1w_{\xi_{0}\xi_{0}}-c.
\end{eqnarray*}
On the other hand, the
maximality of $W$ at $x_0$ gives $0\leq W_{\mu},$
\begin{eqnarray*}
0\leq \frac{w_{\xi_0 \xi_0; \mu}+w^{\prime}_{\xi_0 \xi_0; \mu}}{V}+\lambda\varphi^i\varphi_{i\mu}.
\end{eqnarray*}
Since  $\partial\Omega$ is strictly convex, by \eqref{010601}, we have
\begin{eqnarray*}
\lambda\varphi^i\varphi_{i\mu}
=-\lambda\varphi^i\mu_{\ ;i}^{j}\varphi_{j}\leq-c_1\lambda|D\varphi|^2\leq 0,
\end{eqnarray*}
which implies that
\begin{eqnarray*}
0\leq-\varphi_{\xi_0 \xi_0 \mu}+c,
\end{eqnarray*}
together with
\begin{eqnarray*}
-\varphi_{\xi_0\xi_0\mu}=-\varphi_{\mu\xi_0\xi_0}-R_{\mu \xi_0\xi_0 i}\varphi^i,
\end{eqnarray*}
thus
\begin{eqnarray*}
W(x_0, \xi_0, t_0)\leq c.
\end{eqnarray*}
So we obtain the desired estimate
\begin{equation*}
C(n, \Sigma_0)\leq \varphi_{ij}(x, t)\xi^i\xi^j, \qquad \ \ \forall
(x, \xi, t)\in \overline{\Omega}\times \mathbb{S}^{n-1}\times [0,
T].
\end{equation*}

(ii) $\xi_0$ is non-tangential: if $\xi_0$ is neither tangential nor normal we
need the tricky choice of \cite{LTU}. We find $0<\vartheta<1$ and a
tangential direction $\tau$ such that
\begin{eqnarray*}
\xi_0=\vartheta\tau+\sqrt{1-\vartheta^2}\mu.
\end{eqnarray*}
Thus,
\begin{eqnarray*}
\varphi_{\xi_0\xi_0}=\vartheta^2
\varphi_{\tau\tau}+(1-\vartheta^2)\varphi_{\mu\mu}
+2\vartheta\sqrt{1-\vartheta^2}\varphi_{\tau\mu}.
\end{eqnarray*}
Differentiating the boundary condition at a boundary point, we have
\begin{eqnarray*}
\mu^{i}_{\ ;j}\varphi_i=-\mu^{i}\varphi_{ij}.
\end{eqnarray*}
Therefore, at the boundary point
\begin{eqnarray*}
w^{\prime}(x, \xi_0, \xi_0, t)
=-2\mu^{i}_{\ ;j}\varphi_{i}\langle\xi_0,
\mu\rangle\xi_{0}^{\prime j}=2\vartheta\sqrt{1-\vartheta^2}\varphi_{\tau\mu},
\end{eqnarray*}
and consequently,
\begin{eqnarray*}
\varphi_{\xi_0\xi_0}=\vartheta^2
\varphi_{\tau\tau}+(1-\vartheta^2)\varphi_{\mu\mu}
+w^{\prime}_{\xi_0\xi_0}.
\end{eqnarray*}
Thus, in view of the Neumann boundary condition,
\begin{eqnarray*}
w_{\xi_0\xi_0}+w^{\prime}_{\xi_0\xi_0}=1+\vartheta^2\varphi_\tau
\varphi_\tau-(\vartheta^2\varphi_{\tau\tau}+(1-\vartheta^2)\varphi_{\mu\mu}),
\end{eqnarray*}
which means we can rewrite $\exp(W-\frac{1}{2}\lambda|D\varphi|^2)-C$ as
\begin{eqnarray*}
1+\vartheta^2\varphi_\tau \varphi_\tau-(\vartheta^2\varphi_{\tau\tau}+(1-\vartheta^2)\varphi_{\mu\mu}),
\end{eqnarray*}
so we obtain in view of the maximality of $W$ and the
fact that $\exp(W-\frac{1}{2}\lambda|D\varphi|^2)-C$ is independent of $\xi$
\begin{eqnarray*}
1+\varphi_\tau \varphi_\tau-\varphi_{\tau\tau}\leq1+\vartheta^2\varphi_\tau \varphi_\tau-(\vartheta^2\varphi_{\tau\tau}+(1-\vartheta^2)\varphi_{\mu\mu}),
\end{eqnarray*}
which implies
\begin{eqnarray*}
-\varphi_{\tau\tau}\leq-\varphi_{\mu\mu}.
\end{eqnarray*}
Therefore,
\begin{eqnarray*}
W(x, \tau, t)\leq W(x, \mu, t)+c.
\end{eqnarray*}
So
\begin{eqnarray*}
W(x, \xi_0, t)\leq c
\end{eqnarray*}
in view of Proposition \ref{NN}. Thus, we obtain the desired estimate
\begin{equation*}
C(n, \Sigma_0)\leq \varphi_{ij}(x, t)\xi^i\xi^j, \qquad \ \ \forall
(x, \xi, t)\in \partial\Omega\times \mathbb{S}^{n-1}\times [0,
T].
\end{equation*}
\end{proof}

\begin{theorem}
Under the hypothesis of Theorem \ref{main1.1}, we conclude
\begin{equation*}
T^{*}=+\infty.
\end{equation*}
\end{theorem}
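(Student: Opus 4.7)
The plan is to argue by contradiction: if $T^{*}<+\infty$, I will show that $\varphi$ extends smoothly up to time $T^{*}$ and then, by the short-time existence already invoked in Section 2, beyond $T^{*}$, contradicting maximality. The whole argument hinges on assembling the pointwise a priori estimates proved in Sections 3--6 into a uniform parabolicity statement for the linearization, and then invoking the standard regularity machinery for fully nonlinear parabolic oblique boundary value problems.

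First, I would collect the estimates already available on $\overline{\Omega}\times[0,T]$ for every $T<T^{*}$: the $C^{0}$ control from Lemma 3.2 and its corollary, the two-sided bound on $\dot{\varphi}\,\Theta^{1-\alpha}$ from Lemma 4.1, the gradient bound $|D\varphi|\le\sup_{\overline{\Omega}}|D\varphi(\cdot,0)|$ from Lemma 5.1, the two-sided Monge--Amp\`ere determinant bound \eqref{w-ij}, and the $C^{2}$ bound $|D^{2}\varphi|\le C(n,M_{0})$ from Theorem 6.1 (which combines Proposition 6.3 in the interior, Proposition 6.5 for the double-normal case, and Proposition 6.6 for the remaining mixed case). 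Together these imply that the eigenvalues of $w_{ij}=\sigma_{ij}-\varphi_{ij}+\varphi_{i}\varphi_{j}$ are pinched between two positive constants depending only on $n$ and $M_{0}$, and hence that the linearized coefficient matrix $Q^{ij}=\tfrac{\alpha}{n}\dot{\varphi}\,w^{ij}$ is uniformly elliptic on $\overline{\Omega}\times[0,T^{*})$ with constants independent of $T<T^{*}$.

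Once uniform parabolicity is in hand, I would appeal to H\"older regularity for fully nonlinear parabolic equations with oblique boundary data. The operator $Q$ is a smooth function of $D^{2}\varphi$ which is concave on the admissible cone (since $-\log\det$ is concave on positive-definite matrices), and the Neumann condition $D_{\mu}\varphi=0$ is smooth and strictly oblique on the smooth convex boundary $\partial\Omega$. Therefore the Krylov--Safonov/Evans--Krylov interior theory, together with its oblique-boundary counterpart used in \cite{LTU,Sch1,Ma2,San}, yields some $\alpha'\in(0,1)$ and a constant $C$ depending only on $n$ and $M_{0}$ such that
\[
\|\varphi\|_{C^{2+\alpha',\,1+\alpha'/2}(\overline{\Omega}\times[0,T^{*}))}\le C.
\]
A standard linear parabolic Schauder bootstrap then upgrades this to uniform $C^{k+\alpha',\,k/2+\alpha'/2}$ bounds for every $k\ge2$, independent of $T<T^{*}$. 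Passing to the limit $t\nearrow T^{*}$ produces a smooth function $\varphi(\cdot,T^{*})$ on $\overline{\Omega}$ satisfying the Neumann condition and with $\sigma_{ij}-\varphi_{ij}(\cdot,T^{*})+\varphi_{i}(\cdot,T^{*})\varphi_{j}(\cdot,T^{*})$ uniformly positive definite; using it as new initial data, the short-time existence lemma of Section 2 extends the solution to $[0,T^{*}+\tau)$ for some $\tau>0$, contradicting the maximality of $T^{*}$.

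The main obstacle I expect is the boundary $C^{2,\alpha'}$ step: upgrading the pointwise $C^{2}$ bounds up to $\partial\Omega$ to a H\"older modulus of continuity for a concave fully nonlinear parabolic equation with oblique boundary condition. This is precisely the setting of Lions--Trudinger--Urbas \cite{LTU} and its parabolic extension \cite{Sch1}, where the strict convexity of $\partial\Omega$ (already exploited in the gradient and double-normal estimates) and the concavity of $Q$ in $D^{2}\varphi$ are the two structural ingredients that make the argument work. Once that Evans--Krylov-type H\"older estimate is granted, the continuation scheme above closes immediately.
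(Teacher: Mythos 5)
Your proposal matches the paper's argument: collect the a priori estimates from Sections 3--6 to obtain uniform parabolicity of $Q^{ij}=\frac{\alpha}{n}\dot{\varphi}\,w^{ij}$, invoke Krylov--Safonov (or Lieberman's Chapter 14) together with parabolic Schauder to get uniform $C^{2,\alpha'}$ bounds up to the boundary, and then extend the solution past any finite $T^{*}$ via the short-time existence lemma. One small slip worth noting: $\log\det$ (not $-\log\det$) is concave on positive-definite matrices, so $Q\propto\det(w_{ij})^{-\alpha/n}$ is in fact \emph{convex} in $D^{2}\varphi$, not concave; this is harmless because the Evans--Krylov and Lions--Trudinger--Urbas theory applies equally to convex fully nonlinear operators (apply it to $-Q$).
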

\begin{proof}
Recalling that $\varphi$ satisfies the equation \eqref{Evo-1}
\begin{equation*}
\frac{\partial \varphi}{\partial t}=Q(x, \varphi, D\varphi, D^{2}\varphi).
\end{equation*}
By a simple calculation, we get
\begin{eqnarray*}
\frac{\partial Q}{\partial
\varphi_{ij}}=\frac{\alpha}{n}e^{(\alpha-1)\varphi}
(1+|D\varphi|^2)^{\frac{\beta}{n}}\frac{\det^{\frac{\alpha}{n}}(\sigma_{kl})}
{\det^{\frac{\alpha}{n}}(w_{kl})}w^{ij},
\end{eqnarray*}
which is uniformly parabolic on finite intervals from $C^0$-estimate
\eqref{C^0}, $C^1$-estimates \eqref{Gra-est} and the estimate
\eqref{w-ij}. Then by Evans-Krylov estimate \cite{Ev} \cite{Kr}, or the
results of chapter 14 in \cite{Lieb}, we have
\begin{equation*}
|\varphi|_{C^{2, \alpha}(\overline{\Omega})}\leq C(n, M_0, T^*),
\end{equation*}
which implies the maximal time interval is unbounded, i.e.,
$T^*=+\infty$.
\end{proof}

\section{Convergence of the rescaled flow}

\

Now, we define the  rescaled flow by
\begin{equation*}
\widetilde{X}=X\Theta^{-1}.
\end{equation*}
Thus,
\begin{equation*}
\widetilde{u}=u\Theta^{-1},
\end{equation*}
\begin{equation*}
\widetilde{\varphi}=\varphi-\log\Theta,
\end{equation*}
and the rescaled Gauss curvature
\begin{equation*}
\widetilde{K}=K\Theta^{n}.
\end{equation*}
Then, the rescaled scalar curvature equation takes the form
\begin{equation*}
\frac{\partial}{\partial t}\widetilde{u}=v\widetilde{K}^{-\frac{\alpha}{n}}\Theta^{\alpha-1}
-\widetilde{u}\Theta^{\alpha-1}.
\end{equation*}
Defining $s=s(t)$ by the relation
\begin{equation*}
\frac{ds}{d t}=\Theta^{\alpha-1}
\end{equation*}
such that $s(0)=0$ we conclude that $s$ ranges from $0$ to $+\infty$
and $\widetilde{u}$ satisfies
\begin{equation*}
\frac{\partial}{\partial
s}\widetilde{u}=v\widetilde{K}^{-\frac{\alpha}{n}} -\widetilde{u},
\end{equation*}
or equivalently, with $\widetilde{\varphi}=\log \widetilde{u}$
\begin{equation}\label{R-1}
\frac{\partial}{\partial
s}\widetilde{\varphi}=v\widetilde{u}^{-1}\widetilde{K}^{-\frac{\alpha}{n}}
-1=\widetilde{Q}(\widetilde{\varphi}, D\widetilde{\varphi},
D^2\widetilde{\varphi}).
\end{equation}
Since the spatial derivatives of $\widetilde{\varphi}$ are identical
to those of $\varphi$, \eqref{R-1} is a nonlinear parabolic equation
with a uniformly parabolic and concave operator $\widetilde{K}$.
Then, using the decay estimate estimate \eqref{Gra-est-10} of $|D\varphi|$,
we can deduce a decay
estimate of $|D\widetilde{\varphi}(\cdot, s)|$:
\begin{lemma}
Let $\varphi$ be a solution of \eqref{Evo-1}, then we have for
$0<\alpha<1$
\begin{equation}\label{Gra-est-}
|D\widetilde{\varphi}(x, s)|\leq \sup_{\overline{\Omega}}c\cdot e^{-(1-\alpha)c\cdot
s}|D\widetilde{\varphi}(\cdot, 0)|,
\end{equation}
where $c$ is a positive constant.
\end{lemma}
Thus, we can apply the Evans-Krylov theorem \cite{Ev} \cite{Kr} and
thereafter the parabolic Schauder estimate to conclude:
\begin{lemma}\label{rescaled flow}
Let $\varphi$ be a solution of the inverse Gauss curvature flow
\eqref{Evo-1}. Then,
\begin{equation*}
\widetilde{\varphi}(\cdot, s).
\end{equation*}
converges to a real number for $s\rightarrow +\infty$.
\end{lemma}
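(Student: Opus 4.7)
The plan is to combine uniform higher-order regularity of the rescaled flow with the exponential decay of the gradient already established, and then close the argument by an ODE-type comparison for the spatial average. Concretely, I would prove convergence in three stages: (a) uniform $C^\infty$ bounds on $\widetilde\varphi$ over $\overline\Omega\times[0,\infty)$, (b) exponential decay of every spatial derivative $D^m\widetilde\varphi$, (c) convergence of the (nearly spatially constant) profile to a single real number.

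For stage (a), the operator $\widetilde Q$ is uniformly parabolic on $[0,\infty)$: the matrix $\sigma_{ij}-\widetilde\varphi_{ij}+\widetilde\varphi_i\widetilde\varphi_j$ coincides with $w_{ij}$ up to the bounded factor $\Theta$ absorbed into the coordinates, so \eqref{w-ij} together with Theorem \ref{C^2} and the $C^0$, $\dot\varphi$, $C^1$ bounds translate into uniform-in-$s$ bounds on $\widetilde\varphi$, $D\widetilde\varphi$, $D^2\widetilde\varphi$. Since the equation is concave in the Hessian, Krylov--Safonov (or Evans--Krylov with the Neumann modification from \cite{LTU,Sch1}) yields a uniform $C^{2,\alpha}$ bound, and a standard parabolic Schauder bootstrap on shifted time windows $[s,s+1]$ then gives $\sup_{s\ge 0}\|\widetilde\varphi\|_{C^{k,\alpha}(\overline\Omega\times[s,s+1])}\le C_k$ for every $k$, with constants independent of $s$.

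For stage (b), the gradient-decay lemma just above gives $\|D\widetilde\varphi(\cdot,s)\|_{C^0}\le Ce^{-\lambda s}$. Combining this with the uniform $C^{m+1}$ bound from (a) and the Gagliardo--Nirenberg interpolation on $\Omega$ (respecting the Neumann condition on $\partial\Omega$), one deduces $\|D^m\widetilde\varphi(\cdot,s)\|_{C^0}\le C_m e^{-\lambda_m s}$ with $\lambda_m>0$ for every $m\ge 1$. Thus, writing $\overline\varphi(s):=\fint_\Omega\widetilde\varphi(\cdot,s)\,d\sigma$, we obtain $\|\widetilde\varphi(\cdot,s)-\overline\varphi(s)\|_{C^m(\overline\Omega)}\to 0$ exponentially for every $m$, so it only remains to show that $\overline\varphi(s)$ has a finite limit.

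For stage (c), I would evaluate the rescaled equation \eqref{R-1} at a point where $D\widetilde\varphi$ and $D^2\widetilde\varphi$ are small. A direct computation (setting $D\widetilde\varphi=0$, $D^2\widetilde\varphi=0$ in the explicit formula for $\widetilde K$) shows that when $\widetilde\varphi$ is spatially constant, $v\widetilde u^{-1}\widetilde K^{-\alpha/n}-1=e^{(\alpha-1)\widetilde\varphi}-1$. Using the exponential decay from (b), the functions $f(s):=\max_{\overline\Omega}\widetilde\varphi(\cdot,s)$ and $g(s):=\min_{\overline\Omega}\widetilde\varphi(\cdot,s)$ satisfy (in the viscosity/Dini sense, via Hamilton's trick together with Hopf's lemma applied using $D_\mu\widetilde\varphi=0$) the differential inequalities
\begin{equation*}
f'(s)\le e^{(\alpha-1)f(s)}-1+O(e^{-\lambda s}),\qquad g'(s)\ge e^{(\alpha-1)g(s)}-1-O(e^{-\lambda s}).
\end{equation*}
Since $y\mapsto e^{(\alpha-1)y}-1$ is strictly decreasing for $0<\alpha<1$, a standard ODE comparison produces a contraction: $f(s)-g(s)\to 0$ and each of $f,g$ converges to the same real number (in fact to $0$ by the strict dissipativity, but any finite limit suffices for the statement). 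Together with the $C^\infty$ control from (b), this gives $\widetilde\varphi(\cdot,s)\to\widetilde\varphi_\infty\in\mathbb{R}$ in $C^\infty$. The main technical obstacle is stage (c): one must handle the nonlinear-in-$\widetilde\varphi$ driving term together with the $O(e^{-\lambda s})$ error from higher-order spatial derivatives carefully enough to conclude that the spatial oscillation (not merely the gradient) of $\widetilde\varphi$ collapses and that the limiting value is finite rather than drifting.
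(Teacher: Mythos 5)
Your stages (a) and (b) reproduce what the paper does: after the exponential gradient-decay lemma, the paper invokes Krylov--Safonov and then parabolic Schauder to get uniform higher-order bounds, and interpolation against the decaying gradient collapses the spatial oscillation. Where you genuinely add something is stage (c), and this is exactly the part the paper skips: uniform regularity plus vanishing oscillation only shows $\widetilde\varphi(\cdot,s)$ becomes asymptotically spatially constant, \emph{not} that the constant level stops drifting in $s$, so the paper's one-line ``apply Krylov--Safonov and Schauder to conclude'' is incomplete as written. Your ODE comparison for $f(s)=\max_{\overline\Omega}\widetilde\varphi$ and $g(s)=\min_{\overline\Omega}\widetilde\varphi$ is the right way to close this gap, and it can in fact be made cleaner than you state: at a spatial maximum (interior, or boundary via $D_\mu\widetilde\varphi=0$) one has $D\widetilde\varphi=0$ and $D^2\widetilde\varphi\le 0$, hence $v=1$ and
\begin{equation*}
\widetilde K=\frac{e^{-n\widetilde\varphi}}{(1+|D\widetilde\varphi|^2)^{\frac{n+2}{2}}}\,
\frac{\det(\sigma_{ij}-\widetilde\varphi_{ij}+\widetilde\varphi_i\widetilde\varphi_j)}{\det\sigma_{ij}}
\;\ge\; e^{-n\widetilde\varphi},
\end{equation*}
so the rescaled equation gives the one-sided inequality $f'(s)\le e^{(\alpha-1)f(s)}-1$ with \emph{no} $O(e^{-\lambda s})$ remainder, and symmetrically $g'(s)\ge e^{(\alpha-1)g(s)}-1$. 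Since $y\mapsto e^{(\alpha-1)y}-1$ is strictly decreasing with unique zero at $y=0$ when $0<\alpha<1$, a standard ODE comparison pins both $f$ and $g$ to $0$, and together with your stage (b) this upgrades to $C^\infty$ convergence to $0$ (a sharper conclusion than the lemma's ``some real number''). In short, your argument is correct, follows the paper's framework for (a)--(b), and supplies in (c) the convergence mechanism the paper leaves implicit; the only refinement is that the error term you carry in stage (c) is not actually needed because the maximum-principle inequalities are exact.
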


So, we have
\begin{theorem}\label{rescaled flow}
The rescaled flow
\begin{equation*}
\frac{d
\widetilde{X}}{ds}=\widetilde{K}^{-\frac{\alpha}{n}}\nu-\widetilde{X}
\end{equation*}
exists for all time and the leaves converge in $C^{\infty}$ to a
piece of round sphere.
\end{theorem}

\vspace{0.5 cm}

\vspace {1cm}

\end{document}